\documentclass[11pt]{amsart}

\usepackage{amsmath} 
\usepackage{amssymb}
\usepackage{hyperref}
\usepackage{amsmath,amsthm,amsxtra,mathtools}
\usepackage{eucal,mathrsfs}
\usepackage{enumitem}
\usepackage{fullpage}
\usepackage{xcolor}
\usepackage{upgreek}
\usepackage{soul}
\usepackage{bbm,stix}

\usepackage{tikz}
\usetikzlibrary{cd}

\newcommand{\N}{\mathbb{N}}
\newcommand{\R}{\mathbb{R}}

\newcommand{\X}{\mathbb{X}}
\newcommand{\Y}{\mathbb{Y}}

\newcommand{\x}{\boldsymbol{x}}

\newcommand{\calB}{\mathcal{B}}

\newcommand{\calD}{\mathcal{D}}
\newcommand{\calE}{\mathscr{E}}
\newcommand{\calF}{\mathcal{F}}

\newcommand{\calH}{\mathcal{H}}

\newcommand{\calL}{\mathcal{L}}
\newcommand{\calM}{\mathcal{M}}

\newcommand{\calP}{\mathcal{P}}

\newcommand{\calR}{\mathcal{R}}

\newcommand{\sfb}{\mathsf{b}}
\newcommand{\sfd}{\mathsf{d}}
\newcommand{\sfh}{\mathsf{h}}
\newcommand{\sfr}{\mathsf{r}}
\newcommand{\frakd}{\mathfrak{d}}

\newcommand{\sfD}{\mathsf{D}}
\newcommand{\sfE}{\mathsf{E}}

\newcommand{\sfP}{\mathsf{P}}

\newcommand{\sfR}{\mathsf{R}}

\newcommand{\sfJ}{\mathsf{J}}
\newcommand{\sfT}{\mathsf{T}}
\newcommand{\sfN}{\mathsf{N}}

\newcommand{\scrE}{\mathscr{E}}
\newcommand{\scrR}{\mathscr{R}}
\newcommand{\scrV}{\mathscr{V}}
\newcommand{\scrS}{\mathscr{S}}
\newcommand{\scrC}{\mathscr{C}}
\newcommand{\scrD}{\mathscr{D}}
\newcommand{\scrI}{\mathscr{I}}

\newcommand{\n}{{\bf n}}
\newcommand{\dd}[1]{\mathop{}\!\mathrm{d} #1}
\newcommand{\Ent}{\mathsf{Ent}}
\newcommand{\dnabla}{\overline{\nabla}}
\newcommand{\ddiv}{\overline{\text{div}}}
\newcommand{\Cyl}{\mathrm{Cyl}}

\newtheorem{mainthm}{Theorem}

\newtheorem{theorem}{Theorem}[section]

\newtheorem{lemma}[theorem]{Lemma}
\newtheorem{proposition}[theorem]{Proposition}

\theoremstyle{definition}
\newtheorem{definition}[theorem]{Definition}
\newtheorem{remark}[theorem]{Remark}
\newtheorem{example}[theorem]{Example}
\newtheorem{assu}[theorem]{Assumption}

\title[Large population limit]{Large population limit of interacting population dynamics via generalized gradient structures}

\author{Jasper Hoeksema}
\address{Department of Mathematics and Computer Science, Eindhoven University of Technology, 5600 MB Eindhoven, the Netherlands}
\email{j.hoeksema@tue.nl}

\author{Anastasiia Hraivoronska}
\address{Universite Claude Bernard Lyon 1, CNRS, Ecole Centrale de Lyon, INSA Lyon, Université Jean Monnet, ICJ UMR5208, 69622 Villeurbanne, France}
\email{hraivoronska@math.univ-lyon1.fr}
\urladdr{https://hraivoronska.flywheelsites.com/}

\author{Oliver Tse}
\address{Department of Mathematics and Computer Science, Eindhoven University of Technology, 5600 MB Eindhoven, the Netherlands}
\email{o.t.c.tse@tue.nl}
\urladdr{https://oliver-tse.site}

\begin{document}

\maketitle

\begin{abstract}
    This chapter focuses on the derivation of a doubly nonlocal Fisher-KPP model, which is a macroscopic nonlocal evolution equation describing population dynamics in the large population limit. The derivation starts from a microscopic individual-based model described as a stochastic process on the space of atomic measures with jump rates that satisfy detailed balance w.r.t.\ to a reference measure. We make use of the so-called `cosh' generalized gradient structure for the law of the process to pass to the large population limit using evolutionary Gamma-convergence. In addition to characterizing the large population limit as the solution of the nonlocal Fisher-KPP model, our variational approach further provides a generalized gradient flow structure for the limit equation as well as an entropic propagation of chaos result.
\end{abstract}

\section{Introduction}

Population dynamics, a fundamental area of ecology, is concerned with the study of complex phenomena triggered by simple, and often, random changes in a given population such as birth, death, immigration, emigration, and mutation, that are driven by environmental conditions, resources, etc, and can occur on separate levels of the biological hierarchy---the individual organism, populations of organisms, and communities of populations \cite{begon2021ecology}. In recent years, fueled by the increase in computational power, individual-based models (IBMs) have become a popular tool for ecologists to understand complex phenomena and how they emerge from the behavior of individual organisms. Yet, while they are able to reproduce complex patterns and are computationally tractable, IBMs are more difficult to analyze and require a different set of mathematical tools than their analytical counterpart in classical ecology \cite{grimm2005ecology}, hence creating the need to derive simplified models that reliably approximate the effects of space and stochasticity in the appropriate scaling regimes \cite{cornell2019unified}.

In this chapter, we discuss and prove the large population limit of IBMs for population dynamics to their macroscopic counterparts. In contrast to other works on similar asymptotic limits of stochastic population models \cite{BaMe2015,champagnat2006unifying,finkelshtein2010vlasov,miroslaw2005}, our approach makes use of the variational structures associated with the IBMs, which is modeled as jump processes on the space of finite positive measures on a compact Polish space $X$. To keep the main message of this exposition clear, we choose only to consider the asymptotic limit of birth-death-and-hopping dynamics for a single population type on a compact subset $X$ of $\R^d$, and mention possible generalizations for several instances. In the large population limit, these measure-valued jump processes are shown to converge to the so-called doubly nonlocal Fisher-Kolmogorov-Petrovsky-Piskunov (F-KPP) equation \cite{FKT2019} 
\begin{equation}\label{eq:mf-intro}\tag{\textsf{F-KPP}}
	\partial_t u_t(x) = \sfr_\sfb(u_t,x) - \sfr_\sfd(u_t,x)\,u_t(x) + \int_X \bigr(u_t(y)-u_t(x)\bigr)\,\sfr_\sfh(u_t,x,y)\,\pi(\dd y),
\end{equation}
where $\pi$ is a given \emph{activity measure} on $\Omega$, the curve $t\mapsto u_t$ describes the evolution of the population density relative to $\pi$, and $\sfr_j$, $j\in\{\sfb,\sfd,\sfh\}$ are density-dependent functions stemming from the birth, death, and hopping rates, respectively, that satisfy an appropriate \emph{detailed-balance} condition. We remark that \eqref{eq:mf-intro} is traditionally called doubly nonlocal in the literature because $\sfr_j$, $j\in\{\sfb,\sfd,\sfh\}$ can depend nonlocally on the density $u_t$ and the equation includes nonlocal hopping dynamic. Since this section is mainly concerned with the derivation of \eqref{eq:mf-intro} from IBMs, we will place strong assumptions on $\sfr_j$, $j\in\{\sfb,\sfd,\sfh\}$, guaranteeing the global well-posedness of the mean-field equation \eqref{eq:mf-intro} by standard ODE results in Banach spaces.

Under the detailed-balance condition, the mean-field equation \eqref{eq:mf-intro} may be seen as a 
\emph{generalized gradient flow} on the space of measures in the sense of \cite{PRST2022}---here, the term `generalized' refers to having a gradient structure with non-quadratic dissipation potential. Various examples of generalized gradient flows have appeared earlier in rate-independent systems \cite{mielke2011}, Markov chains, and mass-action kinetics \cite{grmela2010multiscale,mielke-etal2017,mielke-etal2014}.
Our variational approach derives the generalized gradient structure for \eqref{eq:mf-intro} from the gradient structure of its underlying microscopic IBM by means of \emph{evolutionary Gamma convergence} \cite{mielke2016evolutionary,mielke-etal2012,sandier2004gamma,serfaty2011gamma}, thus justifying the choice of the gradient structure for the mean-field equation \eqref{eq:mf-intro}. An alternative convergence technique is based on EDP convergence \cite{mielke2016evolutionary,liero-etal2017,dondl-etal2019,mielke2021exploring}---with some additional complexity, EDP convergence would provide the \emph{unique} mean-field gradient structure. However, we choose to work with evolutionary Gamma-convergence to keep the presentation focused on the overall perspective of our approach. Earlier works with a similar strategy can be found in \cite{erbaretal2016,maasmielke2020}, and a setting with pure birth and death with simplified $u$-dependence was treated in \cite{hoeksema2023} by two of the authors.

\subsubsection*{Beyond gradient dynamics} Many systems in nature exhibit (generalized) gradient dynamics, though most do not. \emph{Active} particle systems, which describe a multitude of biological processes, form a large family of intrinsically non-gradient systems. In these systems, each agent can extract energy from their environment and convert it into directed motion or mechanical forces \cite{RevModPhys.88.045006,conmatphys-070909-104101}. While this work focuses on gradient dynamics, the models presented here could be generalized to non-gradient systems, where birth-and-death rates and hopping rates do not satisfy the detailed-balance condition \cite{hoeksema2023thesis}, as in the case of \emph{active} agents. In these scenarios, the evolutionary Gamma-convergence approach used here is not directly applicable but has a potential for adaptation---this adaptation is currently under investigation.

\subsection{Approach and main results} We start by introducing the space $\X$ of finite configurations over a compact Polish space $X$. Elements $\x$ of $\X$ are symmetric $N$-tuples that can be identified with atomic measures $\nu\in \varGamma\coloneq\calM^+(X)$ with finite total variation, i.e.\ 
$$
    \X \coloneq \bigl\{ \x=(x^1,\ldots,x^N)\in X^N, ~ N\ge 1 \bigr\} / \sim,
$$
with $\sim$ being the equivalence relation $\sigma(\x) \sim \x$ for every permutation $\sigma$, then $\nu=\sum_{i=1}^N \delta_{x^i}$. On $\X$, one defines the so-called Lebesgue-Poisson measure $\lambda\in \varGamma$ (see \eqref{eq:Leb-Pois-measure} below for the defintion) w.r.t.\ an activity measure $\pi\in\varGamma$. Under an appropriate scaling corresponding to the large population limit, one obtains a family of scaled and normalized Lebesgue-Poisson measures $\lambda^n\in \calP(\X)$, $n\ge1$. Due to the identification $\x \leftrightarrow \nu$, one then obtains a \emph{reference measure} $\Pi^n\in \calP(\varGamma)$ that is supported on (scaled) atomic measures of the form $\nu=\frac{1}{n}\sum_{i=1}^N\delta_{x^i}$. Note that $N=N(t)$ is the actual number of particles, whereas $n\in\N$ is the scaling parameter corresponding to the average number of particles per unit volume according to $\pi$. Here, the space of finite measures $\varGamma=\calM^+(X)$ on $X$ is equipped with the narrow topology, i.e.\ in duality with continuous and bounded functions, making it a Polish space. The total variation norm for any finite measure $\nu\in\varGamma$ is simply denoted by $\|\nu\|$. We also consider the space of signed measures $\calM(Z)$ over various Polish spaces $Z$ and the weak-$*$ convergence, i.e.\ convergence against continuous functions that vanish at infinity. 

A large class of (scaled) IBMs for population dynamics can be modeled as $\varGamma$-valued stochastic process described via their corresponding (scaled) infinitesimal generator $L^n$. In our current context, $L^n$ is the sum of two parts consisting of the \emph{birth-and-death generator}
\[
    L_{\sfb\sfd}^n F(\nu) =  
    \int_X n\bigl[F(\nu + n^{-1}\delta_x)-F(\nu)\bigr]\,\sfb_n(\nu,\dd x) + \int_X n\bigl[F(\nu-n^{-1}\delta_x)-F(\nu)\bigr]\,\sfd_n(\nu,x)\,\nu(\dd x),
\]
with the (scaled) measure-dependent birth and death rate $\sfb_n$, $\sfd_n$, and the \emph{hopping (or Kawasaki) generator}
\[
    L_\sfh^n F(\nu) = \iint_{X\times X} n\bigl[F(\nu + n^{-1}(\delta_y-\delta_x))-F(\nu)\bigr]\,\sfh_n(\nu,x,\dd y)\,\nu(\dd x),
\]
with the (scaled) measure-dependent hopping rate $\sfh_n$. The time-marginal law $(\sfP_t^n)_{t\ge 0}$ of the $\varGamma$-valued process then satisfies the \emph{forward Kolmogorov equation}
\begin{align}\label{eq:FKEn}
	\partial_t\sfP_t^n = (L^n)^*\sfP_t^n \tag{\textsf{FKE}$_n$}.
\end{align}

\medskip

As mentioned, we will restrict ourselves to simple rates that satisfy the detailed balance condition:

\begin{assu}[Detailed balance and regularity]\label{ass:db}
Let $(\sfb_n,\sfd_n,\sfh_n)_{n\ge 1}$ be a sequence of measurable rates
\[
    \sfb_n\colon\varGamma\times\calB(X)\to[0,+\infty),\quad \sfd_n\colon\varGamma\times X\to[0,+\infty),\quad \sfh_n\colon \varGamma\times X\times\calB(X)\to[0,+\infty),
\]
satisfying the detailed balance condition:
\begin{align}\label{eq:DB-intro}\tag{\textsf{DB$_n$}}
    \left\{\quad\begin{aligned}
    \sfb_n(\nu,\dd x)&=\sfd_n(\nu+n^{-1}\delta_x,x)\,\pi(\dd x), \\
    \sfh_n(\nu,x,\dd y)\,\pi(\dd x) &= \sfh_n(\nu+n^{-1}(\delta_y-\delta_x),y,\dd x)\,\pi(\dd y),
    \end{aligned}\right.
\end{align}
and the uniform bounds
\[
    \sup_{n\ge 1}\sup_{(\nu,x)\in\varGamma\times X}\Bigl\{\|\sfb_n(\nu,\cdot)\| + \sfd_n(\nu,x) + \|\sfh_n(\nu,x,\cdot)\|\Bigr\} <+\infty,
\]
We further assume that there exist limit rates $(\sfb,\sfd,\sfh)$, such that
\[
    \sup_{(\nu,x)\in\varGamma\times X}\Bigl\{\|\sfb_n(\nu,\cdot)-\sfb(\nu,\cdot)\| +  |\sfd_n(\nu,x)-\sfd(\nu,x)| + \|\sfh_n(\nu,x,\cdot)-\sfh(\nu,x,\cdot)\|\Bigr\}\longrightarrow 0\qquad\text{as $n\to\infty$},
\]
where the limit rates automatically satisfy the limit detailed balance condition:
\begin{align}\label{eq:DB-limit}\tag{\textsf{DB$_\infty$}}
    \left\{\quad\begin{aligned}
    \sfb(\nu,\dd x)&=\sfd(\nu,x)\,\pi(\dd x), \\
    \sfh(\nu,x,\dd y)\,\pi(\dd x) &= \sfh(\nu,y,\dd x)\,\pi(\dd y).
    \end{aligned}\right.
\end{align}
For limit passage and the well-posedness of the limit equation, we will assume additional regularity of the limit rates. Namely, we assume 
\begin{enumerate}
    \item \emph{$\|\cdot\|$-Lipschitz regularity}: There exist Lipschitz constants $\ell_j$, $j\in\{\sfb,\sfd,\sfh\}$ such that
\begin{align*}
\left.\begin{gathered}
    \|\sfb(\nu) - \sfb(\eta)\| \le \ell_\sfb\|\nu-\eta\|,\qquad \sup_{x\in X}|\sfd(\nu,x) - \sfd(\eta,x)|\le \ell_\sfd\|\nu-\eta\| \\
    \sup_{x\in X}\|\sfh(\nu,x,\cdot) - \sfh(\eta,x,\cdot)\|\le \ell_\sfh\|\nu-\eta\|,
\end{gathered}\quad\right\}\quad\forall\,\nu,\eta\in\varGamma.
\end{align*}

\item \emph{Continuity}: For any $A\in X\times\calB(X)$ the functions
\[
    \nu\mapsto \sfb(\nu,A),\quad (\nu,x)\mapsto \sfd(\nu,x),\quad (\nu,x)\mapsto\sfh(\nu,x,A)
\]
are jointly narrowly continuous in $\nu\in\varGamma$ and continuous in $x\in X$.
\end{enumerate}
\end{assu}

\begin{example} An example of rates satisfying Assumption~\ref{ass:db} is given by
\begin{gather*}
    \sfb_n(\nu,\dd x) = \psi_{\sfb\sfd}\left(\int_{y\in X} c(x,y)\,\nu(\dd y)\right)\pi(\dd x) = \sfd_n(\nu,x)\pi(\dd x),\qquad c(x,x)=0, \\
    \sfh_n(\nu,x,\dd y) = \psi_\sfh\left(\int_{z\in X} h(x-z,y-z)\,\nu(\dd z)\right)\pi(\dd y),\qquad h(x-y,0) = h(0,y-x),
\end{gather*}
where $\psi_j\colon\R\to[0,\infty)$, $j\in\{\sfb\sfd,\sfh\}$ are Lipschitz functions and $c,h$ are bounded continuous functions.
\end{example}

Introducing the (scaled) jump kernel $\kappa^n:=\kappa^n_{\sfb\sfd} + \kappa^n_{\sf h}:\varGamma\times\calB(\varGamma)\to[0,\infty)$ with
\begin{gather*}
    \kappa_{\sfb\sfd}^n(\nu,\dd\eta) \coloneq\int_{X} \delta_{\nu+n^{-1}\delta_x}(\dd \eta)\,\sfb_n(\nu,\dd x) + \int_{X} \delta_{\nu-n^{-1}\delta_x}(\dd \eta)\,\sfd_n(\nu,x)\nu(\dd x),\\
      \kappa_{\sfh}^n(\nu,\dd\eta) \coloneq \iint_{X\times X} \delta_{\nu+n^{-1}(\delta_y-\delta_x)}(\dd \eta)\,\sfh_n(\nu,x,\dd y)\nu(\dd x),
\end{gather*}
the generator $L^n$ may then be expressed as
\[
	L^nF(\nu) = \int_{\varGamma} \dnabla_n F(\nu,\eta)\,\kappa^n(\nu,\dd\eta),
\]
where $\dnabla_n$ is the (scaled) \emph{discrete gradient} defined by $\dnabla_n F(\nu,\eta) = n[F(\nu)-F(\eta)]$. The previous expression shows that $L^n$ is the generator of a pure jump process on the space of finite measures $\varGamma$.

In Section~\ref{sec:particle-dynamics}, we show that under Assumption~\ref{ass:db}, the jump kernel $\kappa^n$ satisfies the usual detailed-balance condition for jump processes w.r.t.~the reference measure $\Pi^n$, i.e., the measure 
\[
	\kappa^n(\nu,\dd\eta)\,\Pi^n(\dd\nu)\quad\text{is symmetric}.
\]
Following the approach in \cite{PRST2022}, one obtains the well-posedness and a generalized gradient flow formulation for the forward Kolmogorov equation \eqref{eq:FKEn} with a generalized gradient structure $(\sfR_n,\sfR_n^*,\sfE_n)$, where $\sfR_n,\sfR_n^*$ are dissipation and dual dissipation potentials, respectively, and $\sfE_n$ is the driving energy for the gradient flow, which is taken to be the (scaled) relative entropy $\sfE_n\coloneq n^{-1}\Ent(\cdot|\Pi^n)\colon \calP(\varGamma)\to[0,+\infty]$, where
\[
    \Ent(\alpha|\beta)\coloneq \begin{cases} \displaystyle\int \upphi\biggl(\frac{\dd\alpha}{\dd\beta}\biggr)\dd\beta & \text{if $\alpha\ll \beta$}, \\
    +\infty &\text{otherwise},
    \end{cases}\qquad \upphi(s)=s\log s-s+1,\;\; s\ge 0.
\]
In this work, we will be adopting the so-called `cosh' generalized gradient structure (cf.\ Section~\ref{ss:nggf}), as they are the \emph{natural} ones from the large-deviation perspective (see e.g.\ \cite{peletier2023} for more insights into this structure).

Formally, this means that \eqref{eq:FKEn} may be written as
\begin{align}
	\partial_t\sfP_t + \ddiv_n\, \sfJ_t &= 0 \tag{\textsf{CE}$_n$}\label{eq:CE}\\ 
	\sfJ_t &= \partial_2\sfR_n^*(\sfP_t,-\dnabla_n \sfE_n'(\sfP_t)) \tag{\textsf{KE}$_n$}\label{eq:KE},
\end{align}
where $\ddiv_n$ denotes the (scaled) \emph{discrete divergence}, \eqref{eq:CE} is the \emph{continuity equation} for the density-flux pair $(\sfP,\sfJ)$, and \eqref{eq:KE} is the \emph{kinetic relation}, which provides the relationship between the force $\sfE_n'(\sfP)$ induced by the energy $\sfE_n$ at $\sfP$ and the flux $\sfJ$ via the dual dissipation potential $\sfR_n^*$. By \emph{Legendre duality}, one has that
\begin{align}\label{eq:LD}
	\sfR_n(\sfP,\sfJ) + \sfR_n^*(\sfP,\upzeta) \ge \langle\sfJ,\upzeta\rangle\qquad\forall\, (\sfJ,\upzeta)\tag{\textsf{LD}},
\end{align}
with equality if and only if $\sfJ = \partial_2\sfR_n^*(\sfP,\upzeta)$. Hence, the \eqref{eq:KE} may be equivalently expressed as
\begin{align}\label{eq:KE2}
	\sfR_n(\sfP_t,\sfJ_t) + \sfR_n^*(\sfP_t,-\dnabla \sfE_n'(\sfP_t)) = \langle \sfJ_t,-\dnabla_n \sfE_n'(\sfP_t)\rangle.\tag{\textsf{KE}$_n'$}
\end{align}
On the other hand, an application of the chain rule gives
\begin{align}\label{eq:CR}
	\frac{\dd}{\dd t}\sfE_n(\sfP_t) = \langle \partial_t\sfP_t,\sfE_n(\sfP_t)\rangle = \langle \sfJ_t,\dnabla_n \sfE_n'(\sfP_t)\rangle,\tag{\textsf{CR}$_n$}
\end{align}
where the second equality follows from the continuity equation \eqref{eq:CE}. Combining the expressions \eqref{eq:KE2} and \eqref{eq:CR}, and integrating over arbitrary intervals $[s,t]\subset[0,T]$, one obtains the \emph{energy-dissipation balance}
\begin{align}\label{eq:EDB}
	\mathscr{I}_n([s,t];\sfP,\sfJ)\coloneq \int_s^t \sfR_n(\sfP_r,\sfJ_r) + \sfR_n^*(\sfP_r,-\dnabla_n \sfE_n'(\sfP_r))\,\dd r + \sfE_n(\sfP_t) - \sfE_n(\sfP_s) = 0 \tag{\textsf{EDB}$_n$}.
\end{align}
In this way, the solution $\sfP^n$ of \eqref{eq:FKEn} can be associated with a density-flux pair $(\sfP^n,\sfJ^n)$ satisfying both the continuity equation \eqref{eq:CE} and energy-dissipation balance \eqref{eq:EDB}. Moreover, since $\mathscr{I}_n([s,t];\sfP,\sfJ)\ge 0$ for any density-flux pairs $(\sfP,\sfJ)$ satisfying \eqref{eq:CE}, the associated pair $(\sfP^n,\sfJ^n)$ is in fact a minimizer of the \emph{energy-dissipation functional} $\mathscr{I}_n$, thus providing a variational characterization for the solution of \eqref{eq:FKEn}.

Section~\ref{sec:limit} deals with the large population limit, where we show that a subsequence of generalized gradient flow solutions $(\sfP^n,\sfJ^n)_{n\ge 1}$ to \eqref{eq:FKEn} converges to some limit pair $(\sfP,\sfJ)$ in an appropriate sense as $n\to\infty$, where the limit pair $(\sfP,\sfJ)$ satisfies 
\begin{enumerate}[label=(\textit{\roman*})]
    \item the limit continuity equation
\begin{align}\label{eq:CElimit}
	\partial_t\sfP_t + \text{div}_{\varGamma}\, \sfJ_t = 0,\tag{\textsf{CE$_\infty$}}
\end{align}
where $\text{div}_{\varGamma}$ is the $\varGamma$-divergence operator associated to the $\varGamma$-gradient operator
\[
	\nabla_{\varGamma} F(\nu,x) \coloneq \sum_{m=1}^M \partial_m g\bigl(\langle f_1,\nu\rangle,\ldots,\langle f_m,\nu\rangle\bigr)\, f_m(x),\qquad (\nu,x)\in\varGamma\times X,
\]
defined for cylindrical functions $F(\nu) = g(\langle f_1,\nu\rangle,\ldots,\langle f_M,\nu\rangle)$ with $g\in C_c^\infty(\R^M)$ and $f_m\in C_b(X)$,

\item and the limit energy-dissipation balance
\begin{align}\label{eq:EDBlimit}
	\mathscr{I}_\infty([s,t];\sfP,\sfJ)\coloneq \int_s^t \sfR_\infty(\sfP_r,\sfJ_r) + \sfR_\infty^*(\sfP_r,-\nabla_\varGamma \sfE_\infty'(\sfP_r))\,\dd r + \sfE_\infty(\sfP_t) - \sfE_\infty(\sfP_s) = 0 \tag{\textsf{EDB}$_\infty$},
\end{align}
where $(\sfR_\infty,\sfR_\infty^*,\sfE_\infty)$ is the limit generalized gradient structure with driving energy
\[
    \sfE_\infty(\sfP) = \int_{\varGamma} \Ent(\nu|\pi)\,\sfP(\dd\nu).
\]
\end{enumerate}
The above convergence is established using evolutionary Gamma-convergence techniques, further resulting in \emph{entropic propagation of chaos}, which is an important feature of our approach.

In particular, we show that the limit density-flux pair $(\sfP,\sfJ)$ is a generalized gradient flow solution of the so-called \emph{Liouville equation}
\begin{align}\label{eq:LiE}\tag{\textsf{LiE}}
    \partial_t\sfP_t + \text{div}_\varGamma(\scrV\sfP_t) = 0,
\end{align}
where $\scrV\coloneq \kappa_{\sfb\sfd} + \kappa_\sfh:\varGamma\to\varGamma$ is the measure-valued vector field associated to the limit jump kernels 
\begin{align}\label{eq:rates-limit}
    \begin{aligned}
    \kappa_{\sfb\sfd}[\nu](\dd x) &\coloneq \sfb(\nu,\dd x) - \sfd(\nu,x)\nu(\dd x), \\
    \kappa_\sfh[\nu](\dd x) &\coloneq \int_{X} \sfh (\nu, y,\dd x)\,\nu(\dd y) - \nu(\dd x) \int_{X} \sfh (\nu,x,\dd y).
    \end{aligned}
\end{align}
Observe that the Liouville equation \eqref{eq:LiE} resembles a standard transport equation with the velocity field $\scrV$, whose solution can be obtained by studying its \emph{Lagrangian flow} $\scrS:[0,T]\times \varGamma\to\varGamma$, associated to 
\begin{align}\label{eq:LF}
    \partial_t \nu_t = \scrV[\nu_t],\qquad \nu\in\varGamma.\tag{\textsf{MFE}}
\end{align}
This is the \emph{mean-field equation}. Assuming the well-posedness of \eqref{eq:LF}, the unique solution of \eqref{eq:LiE} is then given by $\sfP_t\coloneq (\scrS_t)_\#\sfP_0$, which can be established utilizing a \emph{superposition principle} (cf.\ Theorem~\ref{th:superposition}).  

In the special case $\sfP_0=\delta_{\nu_0}$ with $\Ent(\nu_0|\pi)<+\infty$, the well-posedness of \eqref{eq:LF} yields the solution $\sfP_t = \delta_{\nu_t}$ with $\nu_t\coloneq \scrS_t(\nu_0)$ and $\Ent(\nu_t|\pi)<+\infty$ for every $t\in[0,T]$. Hence, setting $u_t\coloneq \dd\nu_t/\dd\pi$, one obtains \eqref{eq:mf-intro} from \eqref{eq:LF} with the density-dependent functions
\begin{align}\label{eq:rates-density}
    \sfr_\sfb(u,x) =  \frac{\dd \sfb(u\pi,\cdot)}{\dd\pi}(x),\qquad \sfr_\sfd(u,x) = \sfd(u\pi,x),\qquad \sfr_\sfh(u,x,y) = \frac{\dd\sfh(u\pi,y,\cdot)}{\dd\pi}(x),
\end{align}
which are well-defined due to the detailed balance conditions in Assumption~\ref{ass:db}. As a by-product of the energy-dissipation balance \eqref{eq:EDBlimit}, we obtain a generalized gradient structure $(\scrR,\scrR^*,\scrE)$ for the mean-field equation \eqref{eq:LF} with the driving energy $\scrE = \Ent(\cdot|\pi)\colon\varGamma\to[0,+\infty]$.

\begin{figure}[ht]
\centering
\begin{tikzcd}
\begin{array}{c}
\textbf{Forward Kolmogorov equation
} \\[0.2em]
    \eqref{eq:FKEn}
\end{array}\qquad
\arrow[r, "n\to\infty", start anchor={[xshift=-4ex]}, end anchor={[xshift=5ex]}] 
& 
\qquad \begin{array}{c}
\textbf{Liouville equation} \\[0.2em]
\eqref{eq:Liouville}
\end{array} 
\arrow[d, leftharpoonup, start anchor={[xshift=2.5ex]}, end anchor={[xshift=2.5ex]}]
\arrow[d, rightharpoondown, start anchor={[xshift=1.5ex]}, end anchor={[xshift=1.5ex]}]
\\
\text{}
&
\qquad\begin{array}{c}
    \textbf{Mean-field equation} \\[0.2em]
     \eqref{eq:MF}
\end{array}
\end{tikzcd}
\caption{A depiction of our approach to proving the large-population limit.} \label{fig:convergence}
\end{figure}
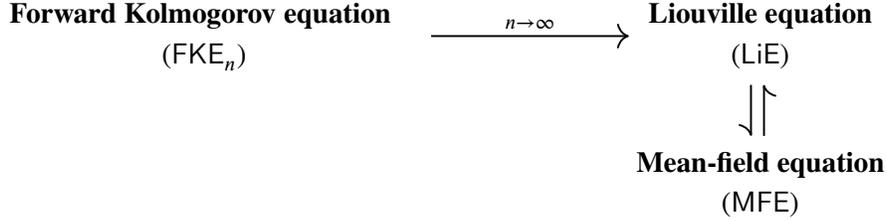 

The discussion above can be summarized in the following result, which is a consequence of Theorem~\ref{th:convergence}.

\begin{mainthm}\label{th:main}
Let $\{(\sfP^n, \sfJ^n)\}_{n\in\N}$ be a sequence of generalized gradient flow solutions to \eqref{eq:FKEn} with initial data $\{\sfP^n_\text{in}\}_{n\in\N}$ and suppose there exists $\sfP_\text{in}\in \text{dom} (\sfE_\infty)$ such that
$$
    \sfP^n_\text{in} \rightharpoonup \sfP_\text{in},\qquad \lim_{n\to\infty} \sfE_n (\sfP^n_\text{in}) = \sfE_\infty(\sfP_\text{in}).
$$
Then there exists a unique generalized gradient flow solution $(\sfP,\sfJ)$ of \eqref{eq:Liouville} with $\sfP_0=\sfP_\text{in}$, such that
\begin{itemize}
         \item $\sfP^n_t \rightharpoonup \sfP_t$ narrowly for all $t\in [0,T]$;
 \item $\lim_{n\to\infty} \sfE_n (\sfP^n_t) = \sfE_\infty(\sfP_t)$ for all $t\in [0,T]$.
\end{itemize}

\medskip

If $\sfP_\text{in}=\delta_{\bar\nu}$, $\bar\nu\in\text{dom}(\scrE)$, then $\sfP_t=\delta_{\nu_t}$, where $\nu$ solves \eqref{eq:LF} and satisfies the energy-dissipation balance 
\[
    \scrI([s,t],\nu,\lambda) \coloneq \int_s^t \calR(\nu_r,\lambda_r) + \calR^*(\nu_r,-\calE'(\nu_t))\,\dd r + \calE(\nu_t) - \calE(\nu_s) = 0,
\]
with an associated flux $\lambda$. If in addition, $u_t\coloneq \dd \nu_t/\dd \pi\in[a,b]$, $0<a\le b\in [0,+\infty)$ for every $t\in[0,T]$, then the entropic propagation of chaos holds, i.e.
\begin{equation}\label{eq:mprop}
     \lim_{n\to\infty} \frac{1}{n}\Ent(\sfP_t^n|\Pi_{\nu_t}^n) = 0\qquad\text{for every $t\in[0,T]$},
\end{equation}
where $\Pi_\nu^n$ is the scaled Poisson measure with activity measure $n\nu$ (see Section~\ref{subsec:large-population} for details).

Finally, the curve $t\mapsto u_t$ solves the doubly nonlocal Fisher-KPP equation \eqref{eq:mf-intro}.  
\end{mainthm}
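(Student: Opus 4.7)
The plan is to follow the evolutionary Gamma-convergence recipe adapted to the generalized gradient structures $(\sfR_n,\sfR_n^*,\sfE_n)$: compactness of the $n$-solutions, a liminf inequality for the energy-dissipation functional $\mathscr{I}_n$, and a chain-rule identification at the limit. By hypothesis and the variational characterization at each $n$, $\mathscr{I}_n([0,T];\sfP^n,\sfJ^n) = 0$, which combined with $\lim_n \sfE_n(\sfP^n_{\text{in}}) = \sfE_\infty(\sfP_{\text{in}}) < \infty$ gives uniform bounds on $\sup_{t\in[0,T]}\sfE_n(\sfP^n_t)$ and on the two dissipation integrals over $[0,T]$. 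From these I would extract a subsequence such that $\sfP^n_t \rightharpoonup \sfP_t$ narrowly for every $t\in[0,T]$, using equi-tightness from the energy bound together with an Arzel\`a--Ascoli-type time-continuity argument based on the dissipation control of the fluxes, and such that the corresponding flux measures $\sfJ^n$ converge weakly-$*$ to some $\sfJ$ with $(\sfP,\sfJ)$ satisfying the limit continuity equation \eqref{eq:CElimit}.

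The heart of the argument is the liminf inequality $\mathscr{I}_\infty([s,t];\sfP,\sfJ) \le \liminf_n \mathscr{I}_n([s,t];\sfP^n,\sfJ^n)$, which decomposes into three parts: (a) $\liminf_n \sfE_n(\sfP^n_t) \ge \sfE_\infty(\sfP_t)$, leveraging the specific structure of $\Pi^n$ as scaled Lebesgue--Poisson measures together with a Stirling-type asymptotic identifying the limit rate function as $\nu\mapsto \Ent(\nu|\pi)$; (b) a Mosco-type liminf for $\int_s^t \sfR_n(\sfP^n_r,\sfJ^n_r)\,\dd r$ using convexity of the cosh dissipation; and (c) the main obstacle, a liminf for $\int_s^t \sfR_n^*(\sfP^n_r,-\dnabla_n \sfE_n'(\sfP^n_r))\,\dd r$, where the discrete gradient $\dnabla_n$ of the relative entropy must be connected to the continuum gradient $\nabla_\varGamma\sfE_\infty'$. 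Here expanding $n[\log(\dd(\nu+n^{-1}\delta_x)/\dd\pi) - \log(\dd\nu/\dd\pi)]$ to leading order produces $\nabla_\varGamma\sfE_\infty'(\sfP)$ plus a remainder that must be controlled uniformly in $n$; this is precisely where the uniform rate convergence and the detailed balance \eqref{eq:DB-intro} in Assumption~\ref{ass:db} enter nontrivially.

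Given the liminf inequality, the chain rule at the limit level (applied to $t\mapsto\sfE_\infty(\sfP_t)$ along any admissible curve satisfying \eqref{eq:CElimit}) yields $\mathscr{I}_\infty \ge 0$ for any density-flux pair, and combined with the liminf forces $\mathscr{I}_\infty([s,t];\sfP,\sfJ) = 0$ on every subinterval. Thus $(\sfP,\sfJ)$ is a generalized gradient flow solution of the Liouville equation \eqref{eq:LiE}. I would then invoke the superposition principle (Theorem~\ref{th:superposition}) to represent $\sfP_t = (\scrS_t)_\#\sfP_{\text{in}}$ in terms of the Lagrangian flow $\scrS$ of \eqref{eq:LF}; the $\|\cdot\|$-Lipschitz regularity of the limit rates in Assumption~\ref{ass:db} gives well-posedness of \eqref{eq:LF} in $(\varGamma,\|\cdot\|)$ by Picard iteration, hence uniqueness of the Liouville solution with prescribed initial datum. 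This uniqueness upgrades subsequential convergence to full-sequence narrow convergence and, by forcing every inequality in the Gamma-convergence chain to become an equality in the limit, delivers the energy convergence $\lim_n \sfE_n(\sfP^n_t) = \sfE_\infty(\sfP_t)$ for every $t\in[0,T]$. In the Dirac case $\sfP_{\text{in}}=\delta_{\bar\nu}$ the flow representation gives $\sfP_t=\delta_{\nu_t}$ with $\nu_t=\scrS_t(\bar\nu)$, and the limit EDB reduces to the one for the mean-field gradient structure $(\calR,\calR^*,\calE)$ on $\varGamma$; setting $u_t=\dd\nu_t/\dd\pi$ and substituting the identities \eqref{eq:rates-density} into \eqref{eq:LF} recovers \eqref{eq:mf-intro}.

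For the entropic propagation of chaos \eqref{eq:mprop} under the density bound $u_t\in[a,b]$, I would decompose
$$
    \frac{1}{n}\Ent(\sfP^n_t|\Pi_{\nu_t}^n) = \frac{1}{n}\Ent(\sfP^n_t|\Pi^n) - \frac{1}{n}\int_\varGamma \log\frac{\dd\Pi_{\nu_t}^n}{\dd\Pi^n}(\eta)\,\sfP^n_t(\dd\eta),
$$
and pass to the limit in each term. The first equals $\sfE_n(\sfP^n_t)$ and converges to $\Ent(\nu_t|\pi)$ by the energy-convergence step above. For the second, the density bounds together with the explicit Lebesgue--Poisson structure make $n^{-1}\log(\dd\Pi_{\nu_t}^n/\dd\Pi^n)$ a uniformly bounded and narrowly continuous functional on $\varGamma$ whose limit along a sequence concentrating at $\delta_{\nu_t}$ equals $\Ent(\nu_t|\pi)$, again by Stirling; the two contributions cancel, establishing \eqref{eq:mprop}. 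The principal technical difficulty throughout remains step (c) of the liminf inequality, where the non-quadratic cosh dissipation, the $n$-dependence of $(\sfb_n,\sfd_n,\sfh_n)$, and the singular limit of $\dnabla_n$ acting on entropy must be handled simultaneously.
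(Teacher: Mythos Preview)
Your overall architecture matches the paper's: compactness via uniform energy bounds and an Arzel\`a--Ascoli argument in a suitable metric, a term-by-term liminf for $\mathscr{I}_n$, identification of the limit as the Liouville gradient flow via the superposition principle, and then the sandwich argument for energy convergence. The treatment of the Dirac case and the decomposition for the entropic propagation of chaos are also in line with what the paper (and the reference it cites) does.

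However, your step (c) contains a genuine conceptual error. The quantity $\sfE_n'(\sfP)$ is $n^{-1}\log U$ with $U=\dd\sfP/\dd\Pi^n$, a function on $\varGamma$; it has nothing to do with $\log(\dd\nu/\dd\pi)$, which is a function on $X$ and is the variational derivative of the \emph{limit} energy $\scrE(\nu)=\Ent(\nu|\pi)$. Your proposed expansion $n[\log(\dd(\nu+n^{-1}\delta_x)/\dd\pi)-\log(\dd\nu/\dd\pi)]$ is not even well-defined, since $\nu+n^{-1}\delta_x$ is singular with respect to $\pi$ whenever $\pi$ is non-atomic. There is no Taylor expansion to be done here: the pre-limit Fisher term involves differences of $\log U$ along jumps in $\varGamma$, not differences of $\log(\dd\nu/\dd\pi)$ along increments in $X$.

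The paper circumvents this entirely by never working with $\sfR_n^*(\sfP,-\dnabla_n\sfE_n'(\sfP))$ in the liminf. Instead, the Fisher information is \emph{defined} in the relaxed Hellinger form
\[
\sfD_{n,\sfb\sfd}(\sfP)=4\,\calH^2\bigl(\vartheta^{n,\sfb}_\sfP,\,\sfT^{n,-}_\#\vartheta^{n,\sfd}_\sfP\bigr),\qquad
\sfD_{n,\sfh}(\sfP)=2\,\calH^2\bigl(\vartheta^{n,\sfh}_\sfP,\,\sfT^{n,\sfh}_\#\vartheta^{n,\sfh}_\sfP\bigr),
\]
which agrees with $\sfR_n^*(\sfP,-\dnabla_n\sfE_n'(\sfP))$ when $\sfP\ll\Pi^n$ but is jointly convex and weak-$*$ lower semicontinuous in its measure arguments. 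Once one shows that the intensity measures $\vartheta^{n,\sfb}_{\sfP^n},\vartheta^{n,\sfd}_{\sfP^n},\vartheta^{n,\sfh}_{\sfP^n}$ and their shifted versions converge weak-$*$ to the corresponding limit intensities (a direct consequence of $\sfP^n_t\rightharpoonup\sfP_t$, the uniform rate convergence in Assumption~\ref{ass:db}, and the fact that $\sfT^{n,-},\sfT^{n,\sfh}$ converge to the identity and to $\sfT$ respectively), the liminf for $\sfD_n$ follows immediately from the lower semicontinuity of the squared Hellinger distance. The same mechanism handles $\sfR_n$ via the joint convexity and $1$-homogeneity of $\Upsilon$. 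So what you flagged as ``the principal technical difficulty'' is, in the paper's formulation, no harder than the liminf for $\sfR_n$.
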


\begin{remark}[Entropic propagation of chaos]
In the canonical ensemble setting, i.e., when the number of particles $n\in\N$ is fixed and goes to infinity, with the distribution of the particles given by $\rho_t^n$, there are various notions of propagation of chaos \cite{chaintrondiez2022}. In this context, the weak convergence of $\sfP_t^n\to \delta_{\nu_t}$ (where $\nu_t$ is now a probability measure) is equivalent to the weak notion of chaoticity (e.g.\ see \cite{Sznitman1991}). This means that for every $K\in \N$, the $K$-marginal distributions $\rho^{n,K}_t$ of $\rho^n_t$ converge to $\nu_t^{\otimes K}$. Moreover, global entropic chaoticity, expressed by the convergence 
\[ 
    \frac{1}{n} \Ent(\rho^n_t|\nu_t^{\otimes n})\to 0\qquad\text{as $n\to\infty$,}
\]
directly implies, through the subadditivity of entropy, the local estimate convergence 
\[
    \Ent(\rho^{n,K}_t|\nu_t^{\otimes K}) \to 0\qquad\text{as $n\to\infty$.}
\]
In the grand canonical setting, the situation becomes more complex. While for certain interacting particle systems birth-and-death processes, the rescaled $K$-correlation measures do converge to $\nu_t^{\otimes K}$ under suitable strong moment assumptions \cite{Finkelshtein2010}, this does not extend to our setting where we only assume convergence of the rescaled entropies for the initial data. Moreover, for $K>1$, the assumption of finite entropy relative to $\Pi^n$ does not even ensure that the $K$-correlation measures are well-defined. Nevertheless, we propose the convergence \eqref{eq:mprop} as a suitable generalization of the global entropic propagation of chaos from the canonical to the grand canonical setting. This convergence suggests that the rescaled entropy of the law of the interacting particle system, relative to suitable Poisson measures, vanishes as $n\to \infty$. It is plausible that this would imply the convergence of appropriately weighted correlation measures, but we leave this to future research.
\end{remark}

\subsubsection*{Acknowledgements} The authors acknowledge support from NWO Vidi grant 016.Vidi.189.102 on ``Dynamical-Variational Transport Costs and Application to Variational Evolution''. A.H. acknowledges support from ERC Advanced Grant on ``Everything You Always Wanted to Know About the JKO Scheme.''

\section{Preliminaries: configuration spaces and large-population scaling}\label{sec:configurations}

\subsection{Configuration space}

The basis for the mathematical description of the IBMs we will consider relies on the framework of configuration spaces, which in our present context, can be found in the works by Kondratiev, Finkelshtein, and others \cite{finkelshtein2009individual, finkelshtein2010vlasov, finkelshtein2013markov}. Throughout, let $X$ be a compact Polish space. 

We define the family of \emph{$N$-point configurations} as the set of $N$-tuples of indistinguishable particles
	\begin{align*}
    \X^N \coloneq X^N\big/ \sim\,,\qquad N\in\N_0\coloneq \N\cup\{0\},\qquad \X^0\coloneq\emptyset,
\end{align*}
where $\sim$ is the equivalence relation $(x^1,\ldots, x^N) \sim (x^{\sigma_1},\ldots, x^{\sigma_N})$ for any permutation $\sigma$. \emph{The space of finite configuration} is then defined as
$$
    \X \coloneq \bigsqcup_{N \in \N_0} \X^N.
$$
We equip $\X$ with the disjoint union topology, which is the finest topology for which all the canonical injections $\varphi_N \colon \X^N\to \X$ are continuous. We also introduce the Borel $\sigma$-algebra on $\X^N$ denoted by $\calB(\X^N)$ and the corresponding Borel $\sigma$-algebra $\calB(\X)$ on $\X$. The continuous map $\sfN:\X\to\N_0$, which assigns $\sfN(\x)=N$ for every $N$-point configuration $\x\in \X^N$ is called the \emph{number map}.

\emph{The Lebesgue-Poisson measure} $\lambda$ on $\X$ with activity measure $\pi$ is defined as
\begin{equation}\label{eq:Leb-Pois-measure}
    \lambda \coloneq \sum_{N=0}^\infty \frac{1}{N!} \pi^{\otimes N}\in\calM^+(\X),
\end{equation}
where $\pi\in\varGamma$ is a given finite Radon measure on $X$. For example, if $X$ is a compact subset of $\R^d$, one can choose $\pi$ to be the restriction of the Lebesgue measure to $X$. It is easy to see that if $\pi$ is a finite measure, then the Lebesgue-Poisson measure $\lambda$ is also finite:
$$
    \lambda(\X) = \sum_{N=0}^\infty \frac{1}{N!} \|\pi\|^N = e^{\|\pi\|} < \infty.
$$
Together, the triplet $(\X, \calB(\X), \lambda)$ forms a measure space.

\begin{remark} 
	Let us discuss some modifications of the configuration space $\X$ that are relevant for particular mathematical models.

The case when particles have multiple types is relevant for biological models such as tumor growth models, where tissue cells can be either healthy,  proliferative,
non-proliferative, or necrotic  \cite{kansal2000simulated}. Other examples include the classical SIR model in epidemiology that includes susceptible, infected, and recovered agents \cite{kermack1927contribution}, and ecological models of various competing species of plants or animals \cite{fournier2004microscopic}. Supposing that the system has $K \in \N$ different types of particles and that each particle can change its type, the generalization of $\X^n$ to multi-types can be described with a multi-index $\n=(n_1,\ldots,n_K) \in \N_0^K$, $n_l\in\N_0$, by setting
\begin{align*}
\Y^0 \coloneq \emptyset,\qquad \Y^{\n} \coloneq \X^{n_1} \times \cdots \times \X^{n_K},
\end{align*}
and defining the space of finite configurations as
\begin{align*}
    \Y \coloneq \bigsqcup_{\n \in \N_0^K} \Y^{\n}.
\end{align*}
Similarly to the one-type case, this space is endowed with the disjoint union topology and the corresponding number map $\sfN:\Y\to\N_0$, which assigns $\sfN(\boldsymbol{\x})=n_1+\cdots+n_K$ for every $\boldsymbol{\x}\in \Y^{\n}$.

An important restriction that we need to include in the definition of the configuration space is that each particle can have only one type. Therefore, we consider a subset $\Y_{\neq} \subset \Y$ defined as
\[
    \Y_{\neq} \coloneq \biggl\{\boldsymbol{\x}\in\Y\;\biggl|\; \boldsymbol{\x}=(\x_1,\ldots,\x_K),\;\; x_l^i \ne x_m^j\;\;\forall\,l,m\in[K],\;l\ne m,\; i\in[n_l],\;\;j\in [n_m]\biggr\},
\]
where $[n]=\{1,\ldots,n\}$, $n\in\N$.

Here, we have the choice to either restrict the configuration space to $\Y_{\neq}$ or to define a measure on $\Y$ having its support contained in $\Y_{\neq}$---we choose the latter. Following the single-type particle configuration, we define the Lebesgue-Poisson measure $\lambda$ on $\Y$ as
\[
    \lambda \coloneq \sum_{n_1, \ldots, n_K =0}^\infty \frac{1}{\prod_{p=1}^K n_p!} \pi^{\otimes n_1} \otimes \dots \otimes \pi^{\otimes n_K}.
\]
We then set $\lambda_{\neq} \coloneq \mathbf{1}_{\Y_\ne}\lambda$, meaning that $\lambda_{\neq}$ is the restriction of $\lambda$ to $\Y_{\neq}$.

	A more general approach to introducing diversity in the particle types is suggested, for instance, in \cite{kondratiev1999analysis} using \emph{marked configuration spaces}. The marked configuration space $\Omega^M_X$ over $X$ with marks from a set $M$ is defined as
\begin{equation*}
    \Omega^M_X \coloneq \{(\x, m) : \x \in \X, ~ m\in M^{\x}\}.
\end{equation*}
where $M^{\x}$ denotes the set of all maps $[\sfN(\x)]\ni i\mapsto m_i \in M$, which is permutation invariant.
\end{remark}

\subsection{Large-population scaling}\label{subsec:large-population}
In performing the large population limit, we consider an appropriate scaling and normalization of the Lebesgue-Poisson measure, i.e.\ we set for each $n\ge 1$,
\[
	\lambda^n\coloneq e^{-n\|\pi\|}\sum_{N=0}^\infty \frac{n^N}{N!} \pi^{\otimes N}\in\calP(\X),
\]
i.e.\ $\lambda^n$ is a normalization of the Lebesgue-Poisson measure with activity measure $n\pi$.

In this way, the expected population size scales as $n$ since
\[
	\int_{\X} \sfN(\x)\,\lambda^n(\dd\x) = e^{-n\|\pi\|} \sum_{N=0}^\infty \frac{N n^N}{N!} \|\pi\|^N = n\|\pi\|.
\]
We further introduce the (scaled) atomic measure map
\[
	\Lambda^n:\X\to \varGamma,\qquad \x\mapsto\Lambda^n(\x) \coloneq \frac{1}{n}\sum_{i=1}^N \delta_{x^i},\quad \x=(x^1,\ldots,x^N)\in\X^N,
\]
which is easily verified to be measurable for each $n\ge 1$. One can then push the scaled Lebesgue-Poisson measure $\lambda^n$ on $\X$ along the map $\Lambda^n$ to a probability measure $\Pi^n$ on $\varGamma$, which serves as the reference measure on $\varGamma$---we call $\Pi^n$ the scaled \emph{Poisson measure}. Then by construction, the reference measure $\Pi^n$ satisfies
\begin{align*}
	\int_{\varGamma} \|\nu\|\,\Pi^n(\dd\nu) = \frac{1}{n}\int_{\X} \sfN(\x)\,\lambda^n(\dd \x) = \|\pi\|\qquad\forall\, n\ge 1,
\end{align*}
ensuring that the expected mass of scaled atomic measures $\nu$ sampled from $\Pi^n$ equals the mass of the activity measure $\pi$. Clearly, the previous equality also provides a uniform finite first-moment estimate for the family $\{\Pi^n\}_{n\ge 1}$. Notice that the support of any measure $\sfP\in\calP(\varGamma)$ satisfying $\sfP\ll \Pi^n$ contains only atomic measures. Moreover, the reference measure $\Pi^n$ has exponential moments with
\begin{align*}
    \frac{1}{n}\log\int_\varGamma e^{n\|\nu\|}\,\Pi^n(\dd\nu) = (e-1)\|\pi\|\qquad\forall\,n\ge 1.
\end{align*}
In particular, if $\sfP\in\calP(\varGamma)$ has finite energy, i.e.\ $\sfE_n(\sfP)<+\infty$, then $\sfP$ has finite first moment, which follows from a variational representation of exponential integrals (cf.\ \cite[Lemma B.1]{hoeksema2020}), generalizing Varadhan's representation of the relative entropy \cite{varadhan1984}. More explicitly, for the scaled driving energy $\sfE_n = n^{-1}\Ent(\cdot|\Pi^n)$, one has that
\begin{align}\label{eq:moment}
    \int_\varGamma \|\nu\|\,\sfP(\dd\nu) \le \sfE_n(\sfP) + \frac{1}{n}\log \int_{\varGamma} e^{n|\nu|}\,\Pi^n(\dd \nu) = \sfE_n(\sfP) + (e-1)\|\pi\|.
\end{align}
These facts play an essential role in the asymptotic limit of $\sfE_n$, which we now illustrate to provide a first glimpse into our approach.

Recall the functional $\sfE_\infty\colon \varGamma\to [0,+\infty]$ mentioned in the introduction, given by
\[
	\sfE_\infty(\sfP)\coloneq 	\int_{\varGamma} \Ent(\nu|\pi)\,\sfP(\dd\nu) = \int_{\varGamma} \scrE(\nu)\,\sfP(\dd\nu).
\]

\begin{proposition}\label{prop:energy_lim}
	Let $(\sfP^n)_{n\ge 1}\subset\calP(\varGamma)$ be a sequence satisfying
	\[
		\sup\nolimits_{n\ge 1} \sfE_n(\sfP^n)<+\infty.
	\]
	Then, there exists some $\sfP\in\calP(\varGamma)$ such that $\sfP^n\rightharpoonup\sfP$ narrowly in $\calP(\varGamma)$. In addition,
	\[
		\sfE_\infty(\sfP) \le \liminf_{n\to\infty} \sfE_n(\sfP^n).
	\]
\end{proposition}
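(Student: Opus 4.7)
The plan is to handle tightness via the moment bound and to establish the lower semicontinuity via a Donsker--Varadhan variational representation combined with a Laplace/Varadhan upper bound for the scaled Poisson measures $\Pi^n$.

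First I would extract a convergent subsequence. The moment estimate \eqref{eq:moment} gives $\sup_n \int_{\varGamma} \|\nu\|\,\sfP^n(\dd\nu) < +\infty$. Since $X$ is compact Polish, the sets $K_R \coloneq \{\nu \in \varGamma : \|\nu\| \le R\}$ are narrowly compact in $\varGamma$ (closed balls in $\calM^+(X)$, where the narrow topology agrees with the weak-$*$ topology since $C(X) = C_b(X)$). Chebyshev's inequality then yields $\sfP^n(\varGamma\setminus K_R) \le C/R$, so the family $(\sfP^n)_n$ is tight on $\calP(\varGamma)$, and Prokhorov's theorem produces a (not relabeled) subsequence with $\sfP^n \rightharpoonup \sfP$ for some $\sfP \in \calP(\varGamma)$.

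For the liminf inequality, I would exploit the bounded-continuous form of the Donsker--Varadhan duality for relative entropy,
$$\frac{1}{n}\Ent(\sfP^n|\Pi^n) = \sup_{G \in C_b(\varGamma)} \left\{\int_{\varGamma} G\, \dd\sfP^n - \frac{1}{n}\log \int_{\varGamma} e^{nG}\, \dd\Pi^n\right\},$$
which is the bounded counterpart of the variational representation already invoked in \eqref{eq:moment} and \cite[Lemma B.1]{hoeksema2020}. For any fixed $G \in C_b(\varGamma)$, narrow convergence gives $\int G\,\dd\sfP^n \to \int G\,\dd\sfP$. The core step is then the Laplace/Varadhan upper bound
$$\limsup_{n\to\infty} \frac{1}{n}\log \int_{\varGamma} e^{nG}\, \dd\Pi^n \le \sup_{\nu \in \varGamma}\bigl\{G(\nu) - \scrE(\nu)\bigr\},$$
which is the large deviation upper bound for $\Pi^n$ on $\varGamma$ with rate function $\scrE(\nu) = \Ent(\nu|\pi)$. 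For linear $G(\nu) = \langle\varphi,\nu\rangle$ with $\varphi \in C(X)$, Campbell's formula for the Poisson structure of $\Pi^n$ gives exactly $\frac{1}{n}\log\int e^{n\langle\varphi,\nu\rangle}\,\dd\Pi^n = \int_X(e^{\varphi}-1)\,\dd\pi$, whose Legendre transform is precisely $\scrE$. The extension to general $G \in C_b(\varGamma)$ uses the uniform exponential moment $\frac{1}{n}\log \int e^{n\|\nu\|}\,\dd\Pi^n = (e-1)\|\pi\|$ to obtain exponential tightness, and then a standard approximation of $G$ by linear functions on the compact sublevel sets $K_R$.

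Combining these two ingredients yields, for every $G \in C_b(\varGamma)$,
$$\liminf_{n\to\infty} \sfE_n(\sfP^n) \ge \int_{\varGamma} G\,\dd\sfP - \sup_{\nu \in \varGamma}\bigl\{G(\nu) - \scrE(\nu)\bigr\}.$$
To conclude I would restrict the supremum to $G \in C_b(\varGamma)$ with $G \le \scrE$, for which the subtracted term is nonpositive. Since $\scrE\colon\varGamma \to [0,+\infty]$ is narrowly lsc and $\varGamma$ is Polish (hence metric), $\scrE$ is the pointwise supremum of an increasing sequence of bounded Lipschitz functions $G_k \nearrow \scrE$; monotone convergence then gives $\int G_k\,\dd\sfP \to \int \scrE\,\dd\sfP = \sfE_\infty(\sfP)$, which delivers the claimed bound. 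The main obstacle is the Laplace upper bound for nonlinear $G$ on the infinite-dimensional space $\varGamma$: once one has Campbell's formula for linear test functions together with exponential tightness, everything else is essentially structural duality.
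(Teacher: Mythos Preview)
Your proposal is correct and self-contained, but it follows a genuinely different route from the paper. The paper's proof restricts to \emph{linear} test functions $F(\nu)=\langle\varphi,\nu\rangle$, for which Campbell's formula gives the exact identity $n^{-1}\log\int e^{nF}\,\dd\Pi^n=\int_X(e^{\varphi}-1)\,\dd\pi$ with no $\limsup$ needed; this yields the liminf inequality directly in the special case $\sfP=\delta_\eta$, and the extension to general $\sfP$ is then outsourced to Mariani's approximation result \cite[Theorem~3.5]{Mariani2012}, which lifts $\Gamma$-liminf bounds from Dirac limits to arbitrary limits. Your argument instead works with all $G\in C_b(\varGamma)$ from the start, which forces you to establish the full Laplace/Varadhan upper bound for $\Pi^n$ (equivalently, the LDP upper bound with rate $\scrE$); you then recover $\sfE_\infty(\sfP)=\int\scrE\,\dd\sfP$ by approximating the lsc function $\scrE$ from below by bounded Lipschitz $G_k$. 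The trade-off is clear: the paper's route keeps the Laplace computation trivial but depends on the somewhat specialized Mariani machinery, whereas your route is textbook large-deviation theory (exponential tightness plus weak upper bound) and avoids any external structural lemma, at the cost of having to sketch the LDP upper bound for nonlinear $G$ on the infinite-dimensional space $\varGamma$. Both are valid; your version is arguably more transparent for readers familiar with Dembo--Zeitouni-style arguments.
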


The proof of Proposition~\ref{prop:energy_lim} is rather elementary but requires a delicate approximation result that can be found in \cite{Mariani2012}. We include the proof here to highlight the general strategy used to establish the corresponding limits for the other terms in the energy-dissipation functional $\scrI_n$.
\begin{proof}[Proof of Proposition~\ref{prop:energy_lim}]
	For any $n\ge 1$, one uses inequality \eqref{eq:moment} to obtain
\begin{align*}
	\int_{\varGamma} |\nu|\,\sfP^n(\dd\nu) \le \sup_{n\ge 1}\sfE_n(\sfP^n) + (e-1)\|\pi\|,
\end{align*}
therewith implying the tightness of the sequence $(\sfP^n)_{n\ge 1}\subset\calP(\varGamma)$ w.r.t.\ narrow convergence. One can then extract a (not relabelled) subsequence and a point $\sfP\in\calP(\varGamma)$ for which $\sfP^n\rightharpoonup\sfP$ narrowly in $\calP(\varGamma)$.

Suppose for the moment that $\sfP=\delta_{\eta}$ for some $\eta\in\varGamma$. For cylinder functions of the form $F(\nu)=\langle \varphi,\nu\rangle$, $\varphi\in B_b(X)$, Varadhan's representation \cite{varadhan1984} then yields
\begin{align*}
	\liminf_{n\to\infty}\sfE_n(\sfP^n) &\ge \langle \varphi,\eta\rangle - \int_X \bigl(e^{\varphi(x)} - 1\bigr)\,\pi(\dd x)
\end{align*}
Taking the supremum over $\varphi\in B_b(X)$ and using Varadhan's representation again, we arrive at
\[
	\liminf_{n\to\infty} \sfE_n(\sfP^n) \ge \Ent(\eta|\pi) = \int_\varGamma \Ent(\nu|\pi)\,\sfP(\dd\nu).
\]
The result for general $\sfP\in\calP(\varGamma)$ follows from \cite[Theorem~3.5]{Mariani2012}.
\end{proof}

\begin{remark}
    In general, one could also consider reference measures in Gibbs form, i.e.\ 
    \[
        e^{-\calF_n(\nu)}\,\Pi^n(\dd\nu),\qquad \calF_n\colon\varGamma\to(-\infty,+\infty].
    \]
    The detailed balance condition on the jump kernel $\kappa^n$ is then specified relative to this measure.
\end{remark}

\section{The forward Kolomogorov equation}\label{sec:particle-dynamics}

We now turn to the forward Kolmogorov equation \eqref{eq:FKEn}, which describes the law of the measure-valued process corresponding to the IBM. We will discuss well-posedness, the phrasing of \eqref{eq:FKEn} in terms of a continuity equation, and its variational formulation via the generalized gradient structure outlined previously. 

\subsection{Continuity equation}\label{sec:fke-con}

Let $T>0$ be an arbitrary but fixed time horizon. We consider curves $(\sfP_t^n)_{t\ge 0} \subset \calP(\varGamma)$ that satisfy the continuity equation \eqref{eq:CE} in the following sense:

\begin{definition}\label{def:CEn}
A tuple $(\sfP,\sfJ^{\sfb\sfd},\sfJ^\sfh)$ satisfies the continuity equation, or simply $(\sfP,\sfJ)\in \mathsf{CE}_n$, if 
\begin{enumerate}
    \item \label{cont_i4} The map $[0,T]\ni t\mapsto \sfP_t \in \calP(\varGamma)$ is narrowly continuous, and
    \[
        \sup_{t\in [0,T]} \int_{\varGamma} \|\nu\| \dd \sfP_t  < \infty.
    \]
    \item \label{cont_i2} The Borel families $\{\sfJ^{\sfb\sfd}_t\}_{t\geq 0} \subset \calM(\varGamma\times X)$, $\{\sfJ^{\sfh}_t\}_{t\geq 0} \subset \calM(\varGamma\times X\times X)$ satisfy 
    \[
        \mathrm{supp}\,\sfJ_t^{\sfh}\subset \bigl\{ (\nu,x,y) \, : x\in \mathrm{supp}\,\nu \bigr\},\qquad \int_0^T \|\sfJ_t^{\sfb\sfd}\|+\|\sfJ_t^{\sfh}\|\, \dd t <\infty.
    \]
    \item \label{cont_i3} For any bounded Borel function $F\in B_b(\varGamma)$ and interval $[s,t]\subset [0,T]$,
    \[\int_{\varGamma} F \dd \sfP_t - \int_\varGamma F\,\dd \sfP_s= \int_s^t \left[ \iint_{\varGamma\times X}\dnabla_{n,\sfb\sfd}F(\nu,x)\,\sfJ^{\mathsf{bd}}_r(\dd \nu\dd x)+ \iiint_{\varGamma\times X^2}\dnabla_{n,\sfh}F(\nu,x,y)\,\sfJ^{\sfh}_r(\dd \nu\dd x\dd y) \right] \dd r,
    \]
    with (scaled) discrete derivatives 
    \begin{equation*}
    \dnabla_{n,\sfb\sfd} F(\nu,x)\coloneq n\bigl[F(\nu+n^{-1}\delta_x)-F(\nu)\bigr], \qquad \dnabla_{n,\sfh}F(\nu,x,y) \coloneq n\bigl[ F(\nu+n^{-1}(\delta_y-\delta_x))-F(\nu)\bigr].
\end{equation*}
\end{enumerate}
\end{definition}

Formally the continuity equation can be viewed in terms of a discrete divergence in the sense that
\[
	\partial_t\sfP_t + \ddiv_{n,\sfb\sfd}\, \sfJ_t^{\sfb\sfd} + \ddiv_{n,\sfh}\, \sfJ_t^{\sfh} = 0,
\]
where $\ddiv_{n,\sfb\sfd}$ and $\ddiv_{n,\sfh}$ are the dual to the discrete derivatives $\dnabla_{n,\sfb\sfd}$ and $\dnabla_{n,\sfh}$ respectively.
  
\begin{remark}
Condition \eqref{cont_i2} of Definition~\ref{def:CEn} is satisfied whenever $\sfJ_t^{\sfh}(\dd \nu,\dd x,\dd y)= \sfh_{t}(\nu,x,\dd y)\nu(\dd x)\sfP_t(\dd \nu)$ for some time-and-measure dependent kernel $\sfh$. Moreover, the moment Condition \eqref{cont_i4} holds by using similar arguments from \cite{PRST2022}, since eventhough the kernel $\kappa^n$ is unbounded, we have 
\begin{equation}\label{eq:remkb}
    \sup_{t\in [0,T]} \int_{\varGamma} \kappa^n(\nu,\varGamma)\,\sfP(\dd \nu) <\infty. 
\end{equation}
\end{remark}

\begin{remark}\label{rem:fke_edgeflux}
The above form of the continuity equation is suitable for taking large population limits, but to apply the tools of \cite{PRST2022} it is useful to bring this into a form of the continuity equation \eqref{eq:CE} similar to the one stated in the introduction. In particular, with a little abuse of notation, setting the \emph{edge flux}
\begin{align*}
        \sfJ^\circ_t(\dd \nu\dd \eta)\coloneq & \frac{n}{2} \int_{x\in X} \delta_{\nu+n^{-1}\delta_x}(\dd \eta)\, \sfJ_t^{\sfb\sfd}(\dd \nu,\dd x)-\frac{n}{2} \int_{x\in X} \delta_{\nu-n^{-1}\delta_x}(\dd \eta)\, \sfJ_t^{\sfb\sfd}(\dd (\nu-\delta_x),\dd x) \\
        &\qquad + n \iint_{(x,y)\in X^2} \delta_{\nu+ n^{-1}(\delta_y-\delta_x)}(\dd \eta)\, \sfJ_t^{\sfh}(\dd \nu,\dd x\dd y),
\end{align*}

Conditions~\eqref{cont_i2} and \eqref{cont_i3} of Definition~\ref{def:CEn} can then be rephrased as
\begin{enumerate}
    \item[(2')] The Borel family $\{\sfJ^\circ_t\}_{t\geq 0}\subset \calM(\varGamma\times \varGamma)$ has finite total variation over $[0,T]$.
    \item[(3')] For any bounded Borel function $F\in B_b(\varGamma)$ and interval $[s,t]\subset [0,T]$, we have
    \[
    \int_{\varGamma} F\dd\sfP_t- \int_\varGamma F\dd \sfP_s = \int_s^t \iint_{\varGamma^2} (F(\nu)-F(\eta)) \,\sfJ^\circ_r(\dd\nu\dd\eta) \dd r.
\]
\end{enumerate}
Vice versa, any Borel family $\{\sfJ^\circ_t\}_{t\ge 0}$ such that $\sfJ^\circ_t(\dd \nu\dd \eta)\ll \kappa^n(\nu,\dd \eta)\Pi^n(\dd \nu)$ can be decomposed into a birth-and-death $\sfJ_t^{\sfb\sfd}$ and a hopping $\sfJ_t^\sfh$ part. 
\end{remark}

In Lemma~\ref{lm:rever}, we show the reversibility of the kernel $\kappa^n$ with respect to $\Pi^n$. Together with the above this implies, via a truncation argument of the kernels $\kappa^n$, mass moment estimates on $\sfP_t$, and adaptation from the techniques in \cite{PRST2022}, that whenever $\sfE_n(\sfP_0)<\infty$ with suitable moment estimates, classical solutions to \eqref{eq:FKEn} exist and have a variational representation in terms of generalized gradient flows, see Theorem \ref{th:fke_main}.

\subsection{Solutions to the forward Kolmogorov equation}\label{sec:fke-sol}

We consider the following family of solutions to the forward Kolmogorov equation \eqref{eq:FKEn}.

\begin{definition}\label{def:sol-FKEn}
A narrowly continuous curve $\sfP = (\sfP_t)_{t\in [0, T]}$ is a solution to \eqref{eq:FKEn} if
\begin{equation}\label{eq:sol_b1}
 \sup_{t\in [0,T]} \int_{\varGamma} \|\nu\| \dd \sfP_t  < \infty,
\end{equation}
and for every bounded Borel function $F\in B_b(\varGamma)$ and interval $[s,t]\subset [0,T]$,
\begin{equation}\label{eq:sol_c}
    \int_{\varGamma} F \dd \sfP_t - \int_{\varGamma} F \dd \sfP_s = \int_s^t \int_{\varGamma} L^n F (\nu) \, \sfP_r(\dd \nu) \dd r.
\end{equation}
    
\end{definition}

To state a solution of \eqref{eq:FKEn} in terms of the tuple $(\sfP,\sfJ^{\sfb\sfd},\sfJ^\sfh)$, we define the maps
\begin{align*}
    \sfT^{n,\pm}(\nu,x) &\coloneq \left(\nu\pm n^{-1} \delta_x,x\right),\qquad (\nu,x)\in\varGamma\times X,\\
    \sfT^{n,\sfh}(\nu,x,y) &\coloneq \left(\nu+n^{-1}(\delta_y-\delta_x),y,x\right),\qquad (\nu,x,y)\in\varGamma\times X\times X,
\end{align*}
describing the birth, death, and hopping transitions, and jump intensity measures
\begin{gather*}
    \vartheta^{n,\sfb}_{\sfP}(\dd \nu,\dd x)\coloneq\sfb_n(\nu,\dd x)\sfP(\dd \nu), \qquad \vartheta^{n,\sfd}_{\sfP}(\dd \nu,\dd x)\coloneq\sfd_n(\nu,x)\nu(\dd x)\sfP(\dd \nu),\\
    \vartheta^{n,\sfh}_{\sfP}(\dd \nu,\dd x,\dd y) \coloneq \sfh_n(\nu,x,\dd y)\nu(\dd x)\sfP(\dd \nu).
\end{gather*}
Note that $\sfT^{n,-}_{\#}\vartheta^{n,\sfd}_{\sfP}$ and $\sfT^{n,\sfh}_{\#} \vartheta^{n,h}_{\sfP}$ are also well-defined measures.

\begin{lemma}\label{lm:solf}
    A curve $\sfP$ is a solution to \eqref{eq:FKEn} if and only if $(\sfP,\sfJ)\in \mathsf{CE}_n$ with 
    \begin{equation}\label{eq:sol_flux}
        \sfJ^{\sfb\sfd}_t=\vartheta^{n,\sfb}_{\sfP_t}-\sfT^{n,-}_{\#}\vartheta^{n,\sfd}_{\sfP_t}, \qquad \sfJ^{\sfh}_t= \frac{1}{2}\left(\vartheta^{n,\sfh}_{\sfP_t}-\sfT^{n,\sfh}_{\#} \vartheta^{n,\sfh}_{\sfP_t}\right),
    \end{equation}
    for almost every $t\in[0,T]$.
\end{lemma}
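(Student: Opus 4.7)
The plan is to establish both directions of the equivalence simultaneously by verifying, for every bounded Borel $F\in B_b(\varGamma)$, the pointwise-in-time identity
\begin{equation*}
    \int_\varGamma L^n F\,\dd\sfP = \iint_{\varGamma\times X}\dnabla_{n,\sfb\sfd}F\,\dd\sfJ^{\sfb\sfd} + \iiint_{\varGamma\times X^2}\dnabla_{n,\sfh}F\,\dd\sfJ^\sfh,
\end{equation*}
with the fluxes defined by \eqref{eq:sol_flux}. Once this is in hand, integrating over $r\in[s,t]$ makes condition (3) of Definition~\ref{def:CEn} literally identical to \eqref{eq:sol_c}, while the moment bound in Definition~\ref{def:sol-FKEn} coincides with condition (1). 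Thus both implications will follow as soon as one also checks that the stated fluxes satisfy the structural conditions of Definition~\ref{def:CEn}(2).

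For the birth-and-death generator, the birth contribution integrated against $\sfP$ is by construction $\iint \dnabla_{n,\sfb\sfd}F\,\dd\vartheta^{n,\sfb}_\sfP$. For the death contribution I would use the elementary rewriting
\begin{equation*}
    n\bigl[F(\nu - n^{-1}\delta_x) - F(\nu)\bigr] = -\dnabla_{n,\sfb\sfd}F\bigl(\sfT^{n,-}(\nu,x)\bigr),
\end{equation*}
followed by the definition of the pushforward to recover the term $-\iint\dnabla_{n,\sfb\sfd}F\,\dd(\sfT^{n,-}_\#\vartheta^{n,\sfd}_\sfP)$. Summing the two yields precisely the stated formula for $\sfJ^{\sfb\sfd}$.

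For the hopping generator, direct integration gives $\iiint \dnabla_{n,\sfh}F\,\dd\vartheta^{n,\sfh}_\sfP$; the symmetrization and the prefactor $\tfrac12$ in \eqref{eq:sol_flux} are explained by the involutive antisymmetry
\begin{equation*}
    \dnabla_{n,\sfh}F\bigl(\sfT^{n,\sfh}(\nu,x,y)\bigr) = n\bigl[F(\nu) - F\bigl(\nu + n^{-1}(\delta_y - \delta_x)\bigr)\bigr] = -\dnabla_{n,\sfh}F(\nu,x,y),
\end{equation*}
where the first equality uses the cancellation $n^{-1}(\delta_y-\delta_x) + n^{-1}(\delta_x-\delta_y) = 0$ inside $F$. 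This gives $\iiint \dnabla_{n,\sfh}F\,\dd(\sfT^{n,\sfh}_\#\vartheta^{n,\sfh}_\sfP) = -\iiint \dnabla_{n,\sfh}F\,\dd\vartheta^{n,\sfh}_\sfP$, and the $\tfrac12$ exactly absorbs the resulting doubling.

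Finally, I would verify Definition~\ref{def:CEn}(2). The support condition on $\sfJ^\sfh_t$ is immediate: in $\vartheta^{n,\sfh}_\sfP$ the factor $\nu(\dd x)$ forces $x\in\mathrm{supp}\,\nu$, while in $\sfT^{n,\sfh}(\nu,x,y)$ the image's ``$x$-coordinate'' is $y$, which belongs to the support of the image's first coordinate $\nu + n^{-1}(\delta_y - \delta_x)$ by construction. The total-variation bound $\int_0^T(\|\sfJ^{\sfb\sfd}_t\| + \|\sfJ^\sfh_t\|)\,\dd t<\infty$ follows by dominating $\|\sfJ^{\sfb\sfd}_t\|\le \|\vartheta^{n,\sfb}_{\sfP_t}\| + \|\vartheta^{n,\sfd}_{\sfP_t}\|$ (and analogously for $\sfJ^\sfh_t$) and combining the uniform rate bounds of Assumption~\ref{ass:db} with the moment estimate \eqref{eq:sol_b1}. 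The whole argument is essentially bookkeeping; the only genuinely noteworthy step is the involutive antisymmetry identity that accounts for the $\tfrac12$ prefactor in the hopping flux.
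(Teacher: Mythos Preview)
Your proposal is correct and follows essentially the same approach as the paper: both establish the identity $\int_\varGamma L^nF\,\dd\sfP = \iint\dnabla_{n,\sfb\sfd}F\,\dd\sfJ^{\sfb\sfd} + \iiint\dnabla_{n,\sfh}F\,\dd\sfJ^\sfh$ by direct computation, using the same pushforward rewriting for the death term and the same involutive antisymmetry $\dnabla_{n,\sfh}F\circ\sfT^{n,\sfh} = -\dnabla_{n,\sfh}F$ to justify the $\tfrac12$ in the hopping flux. Your version is in fact slightly more complete, since you explicitly verify condition~(2) of Definition~\ref{def:CEn} (the support condition on $\sfJ^\sfh$ and the total-variation bound), which the paper's proof leaves implicit.
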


\begin{proof}
    For any $\sfP$ satisfying the mass bound \eqref{eq:sol_b1} and any $F\in B_b(\varGamma)$, we have that
    \begin{align*}
        \iint_{\varGamma\times X} \dnabla_{n,\sfb\sfd}F(\nu,x)\dd \left(\vartheta^{n,b}_{\sfP}-\sfT^{n,-}_{\#}\vartheta^{n,d}_{\sfP}\right) &= \iint_{\varGamma\times X} n\bigl[F(\nu+n^{-1}\delta_x)-F(\nu)\bigr]\sfb_n(\nu,\dd x)\sfP(\dd \nu) \\
        &\qquad +\iint_{\varGamma\times X} n\bigl[F(\nu-n^{-1}\delta_x)-F(\nu)\bigr]\sfd_n(\nu,x)\nu(\dd x)\sfP(\dd \nu) \\
        &= \int_\varGamma L_{\sfb\sfd}^nF(\nu)\,\sfP(\dd\nu).
    \end{align*}
    Similarly, for the hopping part, we have that
     \begin{align*}
        -\iiint_{\varGamma\times X^2} \dnabla_{n,\sfh}F(\nu,x,y)\,\sfT^{n,h}_{\#}\vartheta^{n,h}_{\sfP}(\dd\nu\dd x\dd y) &=\iiint_{\varGamma\times X^2} \dnabla_{n,\sfh}F(\nu,x,y)\, \vartheta^{n,h}_{\sfP}(\dd\nu\dd x\dd y) \\ 
        &=\iiint_{\varGamma\times X^2} \dnabla_{n,\sfh}F(\nu,x,y)\, \sfh_n(\nu,x,\dd y)\nu(\dd x)\sfP(\dd \nu) \\
        &= \int_\varGamma L^n_\sfh F(\nu)\,\sfP(\dd\nu),
    \end{align*}
    thereby concluding the proof.
\end{proof}

\subsection{Detailed balance}\label{ss:rever}

We now state the detailed balance condition for both the birth-and-death and hopping parts separately, under Assumption \ref{ass:db}.

\begin{lemma}\label{lm:rever}
    Both jump kernels $\kappa^n_{\sfb\sfd}$ and $\kappa^n_{\sfh}$ satisfy the detailed balance condition w.r.t.\ $\Pi^n$. Consequently, the jump kernel $\kappa^n$ satisfies the detailed balance condition w.r.t.\ $\Pi^n$.

    In particular, we have that
    \[ 
        \sfT^{n,-}_{\#} \vartheta^{n,\sfd}_{\Pi^n}=\vartheta^{n,\sfb}_{\Pi^n}, \qquad \sfT^{n,\sfh}_{\#} \vartheta^{n,\sfh}_{\Pi^n} =\vartheta^{n,\sfh}_{\Pi^n}. 
    \]
\end{lemma}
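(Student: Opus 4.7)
The plan is to reduce the two identities (and hence detailed balance of $\kappa^n$) to a Mecke-type integration-by-parts formula for the scaled Poisson measure $\Pi^n$. Since $\Pi^n$ is the pushforward of $\lambda^n$ under $\Lambda^n$, and $\lambda^n$ is a normalized Lebesgue--Poisson measure with activity $n\pi$, an index shift $N\mapsto N+1$ in the series defining $\lambda^n$ should yield
\begin{equation*}
    \int_\varGamma\!\!\int_X G(\nu + n^{-1}\delta_x,x)\,\pi(\dd x)\,\Pi^n(\dd\nu) \;=\; \int_\varGamma\!\!\int_X G(\nu,x)\,\nu(\dd x)\,\Pi^n(\dd\nu)
\end{equation*}
for every bounded Borel $G\colon\varGamma\times X\to\R$. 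I would establish this identity first; everything else is bookkeeping on top of it.

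For the birth--death equality $\sfT^{n,-}_\#\vartheta^{n,\sfd}_{\Pi^n} = \vartheta^{n,\sfb}_{\Pi^n}$, I would test against $G\in B_b(\varGamma\times X)$ and use the birth DB relation $\sfb_n(\nu,\dd x) = \sfd_n(\nu + n^{-1}\delta_x,x)\,\pi(\dd x)$ to rewrite $\int G\,\dd\vartheta^{n,\sfb}_{\Pi^n}$ as the left-hand side of Mecke applied to $(\nu,x)\mapsto G(\nu - n^{-1}\delta_x,x)\sfd_n(\nu,x)$; one invocation of Mecke then produces exactly $\int G\,\dd(\sfT^{n,-}_\#\vartheta^{n,\sfd}_{\Pi^n})$.

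The hopping identity $\sfT^{n,\sfh}_\#\vartheta^{n,\sfh}_{\Pi^n} = \vartheta^{n,\sfh}_{\Pi^n}$ is where I expect the only genuine difficulty, and it requires two Mecke applications interleaved with a single use of the hopping DB condition. Starting from $\int G(\nu+n^{-1}(\delta_y-\delta_x),y,x)\,\sfh_n(\nu,x,\dd y)\,\nu(\dd x)\,\Pi^n(\dd\nu)$, I would: first, apply Mecke in $x$ to convert $\nu(\dd x)$ to $\pi(\dd x)$, shifting $\nu\mapsto \nu + n^{-1}\delta_x$ inside the integrand (so the argument of $G$ becomes $\nu + n^{-1}\delta_y$); second, invoke the hopping DB relation at the \emph{shifted} configuration, $\sfh_n(\nu+n^{-1}\delta_x,x,\dd y)\,\pi(\dd x) = \sfh_n(\nu+n^{-1}\delta_y,y,\dd x)\,\pi(\dd y)$; third, apply Mecke in the $y$-variable in the reverse direction to turn $\pi(\dd y)$ back into $\nu(\dd y)$; finally, relabel $x\leftrightarrow y$. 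The delicate point is that the DB shift $\delta_y-\delta_x$ must line up exactly with the $\delta_x$ introduced by the first Mecke step, which is why DB must be applied at $\nu + n^{-1}\delta_x$ rather than at $\nu$; this dictates the order of the steps.

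Given the two identities, detailed balance of $\kappa^n$ with respect to $\Pi^n$ is essentially formal: each summand of $\kappa^n(\nu,\dd\eta)\Pi^n(\dd\nu)$ is the pushforward of one of the $\vartheta^{n,j}_{\Pi^n}$, $j\in\{\sfb,\sfd,\sfh\}$, along the appropriate jump map $(\nu,x)\mapsto(\nu,\nu\pm n^{-1}\delta_x)$ or $(\nu,x,y)\mapsto(\nu,\nu+n^{-1}(\delta_y-\delta_x))$ on $\varGamma\times\varGamma$. The two identities are precisely what is needed to swap the two endpoints of each jump, yielding the symmetry $\kappa^n(\nu,\dd\eta)\Pi^n(\dd\nu) = \kappa^n(\eta,\dd\nu)\Pi^n(\dd\eta)$.
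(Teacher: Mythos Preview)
Your proposal is correct, and the underlying mechanism---the index shift $N\mapsto N+1$ in the Lebesgue--Poisson series---is the same as the paper's. The organizational difference is that you isolate the Mecke identity
\[
    \int_\varGamma\int_X G(\nu + n^{-1}\delta_x,x)\,\pi(\dd x)\,\Pi^n(\dd\nu) = \int_\varGamma\int_X G(\nu,x)\,\nu(\dd x)\,\Pi^n(\dd\nu)
\]
as a standalone tool on $\varGamma$ and then invoke it twice for the hopping part, whereas the paper works directly on the configuration space $\X$, unrolling $\Pi^n$ as the series $e^{-n\|\pi\|}\sum_N \tfrac{n^N}{N!}\pi^{\otimes N}$ and swapping the roles of $x_j$ and $y$ inline (and for the birth--death part simply cites a reference). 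Your packaging is cleaner and more reusable: the Mecke formula absorbs the bookkeeping of the series once, after which both identities follow from short formal manipulations, and your observation that the hopping DB must be applied at the shifted configuration $\nu+n^{-1}\delta_x$ is exactly what the paper achieves by its step~(1). The paper's version has the minor advantage of making the combinatorics fully explicit without appealing to an auxiliary identity.
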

\begin{proof}
    The symmetry of the birth-and-death part $\kappa_{\sfb\sfd}^n(\nu,\dd\eta)\,\Pi^n(\dd\nu)$ follows from \cite[Lemma 17.12]{ThesisHoeksema2023}. As for the hopping part, we show that
    \[
    	\int_{\varGamma} G(\nu,\eta)\,\kappa^n_{\sfh}(\nu,\dd \eta)\,\Pi^n(\dd \nu) = \int_{\varGamma} G(\eta,\nu)\,\kappa^n_{\sfh}(\nu,\dd \eta)\,\Pi^n(\dd \nu),
    \]
    which implies the symmetry of the measure $\kappa^n_{\sfh}(\nu,\dd \eta)\,\Pi^n(\dd \nu)$. To simplify notation, we set
    \begin{align*}
    		\widehat\sfh_n(\x,x,\dd y) \coloneq \sfh_n(\Lambda^n(\x),x,\dd y),\qquad
    		\widehat G_n(\x,\sigma) \coloneq G(\Lambda^n(\x),\Lambda^n(\sigma)),
    \end{align*}
    and $\x_j^y \coloneq (x_1,\ldots,x_{j-1},y,x_{j+1},\ldots,x_N)$ for any $\x\in\X^N$, $y\in X$. We then have that
    \begin{align*}
    \int_{\varGamma} G(\nu,\eta)\,\kappa^n_{\sfh}(\nu,\dd \eta)\,\Pi^n(\dd \nu) & \\
    &\hspace{-6em}= e^{-n|\pi|}\sum_{N=0}^\infty \frac{n^N}{N!}\int_{X^N} \sum_{j=1}^N \int_X \widehat G_n(\x,\x_j^y)\,\widehat\sfh_n(\x,x_j,\dd y)\,\pi^{\otimes N}(\dd x_1\cdots\dd x_N) \\
    &\hspace{-6em}\stackrel{(1)}{=} e^{-n|\pi|}\sum_{N=0}^\infty \frac{n^N}{N!}\int_{X^N} \sum_{j=1}^N \int_X \widehat G_n(\x,\x_j^{y})\,\widehat\sfh_n(\x_j^{y},y,\dd x_j)\,\pi(\dd y)\,\pi^{\otimes (N-1)}(\widehat{\dd x_j}) \\
    &\hspace{-6em}\stackrel{(2)}{=} e^{-n|\pi|}\sum_{N=0}^\infty \frac{n^N}{N!}\int_{X^N} \sum_{j=1}^N \int_X \widehat G_n(\sigma_j^{ y},\sigma)\,\widehat\sfh_n(\sigma,x_j,\dd y)\,\pi^{\otimes N}(\dd x_1\cdots\dd x_N) \\
    &\hspace{-6em}= \int_{\varGamma} G(\eta,\nu)\,\kappa^n_{\sfh}(\nu,\dd \eta)\,\Pi^n(\dd \nu)
\end{align*}
where $(1)$ follows from Assumption~\ref{ass:db} and $(2)$ follows from swapping the role of $x_j$ and $y$.
\end{proof}

\subsection{Generalized gradient flow formulation}\label{ss:nggf}

As mentioned in the introduction, a generalized gradient structure consists of a dissipation potential $\sfR_n$, a dual dissipation potential $\sfR_n^*$, and a driving functional $\sfE_n$. 

We begin by recalling the driving functional 
\[
  \calP(\varGamma)\ni\sfP\mapsto  \sfE_n(\sfP) = n^{-1}\Ent(\sfP,\Pi^n).
\]
To state the dissipation potentials, we will need to introduce various objects. First, we define  
\[ 
    \R\times[0,+\infty)\times[0,+\infty)\ni(w,u,v)\mapsto \Upsilon(w,u,v)\coloneq\begin{cases}
        0 & \text{if $w=0$}, \\
        \uppsi\left( w / \sqrt{u v}\right)\sqrt{u v} & \text{if $u,v>0$}, \\
        +\infty &\text{if $w\neq 0$ and $uv=0$},
    \end{cases}
    \]
where $\uppsi$ is the Legendre dual of 
\[
    \R\ni z\mapsto\uppsi^*(z)\coloneq2(\cosh(z/2)-1).
\]
It is straightforward to check that $\Upsilon$ is jointly convex, lower semicontinuous, and $1$-homogeneous (cf.\ \cite{PRST2022}).

\subsubsection*{Birth-and-death dissipation potentials} We define the birth-and-death dissipation and dual dissipation potentials as
\begin{equation}
    \begin{aligned}
        \calP(\varGamma_n)\times \calM(\varGamma\times X)\ni (\sfP,\sfJ)\mapsto \sfR_{n,\sfb\sfd}(\sfP,\sfJ) & \coloneq 2\iint_{\varGamma\times X} \Upsilon(\sfJ/2,\vartheta^{n,\sfb}_{\sfP},\sfT^{n,-}_{\#}\vartheta^{n,\sfd}_{\sfP}), \\
         \calP(\varGamma_n)\times B_b(\varGamma\times X)\ni (\sfP,\upzeta)\mapsto\sfR_{n,\sfb\sfd}^*(\sfP,\upzeta)& \coloneq 2\iint_{\varGamma\times X} \uppsi^*(\upzeta) \dd \sqrt{\vartheta^{n,\sfb}_{\sfP} \left(\sfT^{n,-}_{\#}\vartheta^{n,\sfd}_{\sfP}\right)},
    \end{aligned}
\end{equation}
where for any two measures $\mu,\nu$, the square root of its product is given by
$
    \sqrt{\mu\nu} = \sqrt{\tfrac{\dd\mu}{\dd\sigma}\tfrac{\dd\nu}{\dd\sigma}}\sigma
$
for any common dominating measure $\sigma$, i.e., $\mu,\nu\ll\sigma$ (see \cite[Section~2.3]{PRST2022} for a detailed discussion on concave transformations of measures). We further define the birth-and-death Fisher information as
\[
    \calP(\varGamma_n)\ni \sfP\mapsto \sfD_{n,\sfb\sfd}(\sfP) \coloneq 4 \calH^2(\vartheta^{n,\sfb}_{\sfP},\sfT^{n,-}_{\#}\vartheta^{n,\sfd}_{\sfP})\in[0,+\infty],
\]
where $\calH$ is the Hellinger distance between two measures defined as
$$
     \calH^2(\mu, \nu) = \frac{1}{2} \int \left( \sqrt{\frac{\dd\mu}{\dd\sigma}} - \sqrt{\frac{\dd\nu}{\dd\sigma}} \right)^2 \dd \sigma
$$
for some common dominating measure $\sigma$.

\subsubsection*{Hopping dissipation potentials}
In a similar fashion, we define the hopping dissipation and dual dissipation potentials as
\begin{equation}
    \begin{aligned}
        \calP(\varGamma_n)\times \calM(\varGamma\times X^2)\ni (\sfP,\sfJ)\mapsto\sfR_{n,\sfh}(\sfP,\sfJ)&\coloneq\iiint_{\varGamma\times X^2} \Upsilon(\sfJ,\vartheta^{n,\sfh}_{\sfP},\sfT^{n,\sfh}_{\#}\vartheta^{n,\sfh}_{\sfP}), \\
        \calP(\varGamma_n)\times B_b(\varGamma\times X^2)\ni(\sfP,\upzeta)\mapsto \sfR_{n,\sfh}^*(\sfP,\upzeta)& \coloneq\iiint_{\varGamma\times X^2} \uppsi^*(\upzeta) \dd \sqrt{\vartheta^{n,\sfh}_{\sfP} \left(\sfT^{n,\sfh}_{\#}\vartheta^{n,\sfh}_{\sfP}\right)},
    \end{aligned}
\end{equation}
and the hopping Fisher information as 
\[
    \calP(\varGamma_n)\ni \sfP\mapsto \sfD_{n,\sfh}(\sfP) \coloneq 2 \calH^2(\vartheta^{n,\sfh}_{\sfP},\sfT^{n,\sfh}_{\#}\vartheta^{n,\sfh}_{\sfP})\in[0,+\infty].
\]

\begin{remark}
If $\sfP = U\Pi^n$, then by Lemma \ref{lm:rever}, we have that
    \[
       \sqrt{\vartheta^{n,\sfb}_{\sfP} \left(\sfT^{n,-}_{\#}\vartheta^{n,\sfd}_{\sfP}\right)}=  \sqrt{U^{\sfb\sfd}(\nu)U(\nu) }\, \sfb_n(\nu,\dd x)\Pi^n(\dd \nu),
    \]
    allowing us to express the Fisher information as
    \begin{align*}
        \sfD_{n,\sfb\sfd}(\sfP) &= 2\iint_{\varGamma\times X} \left(\sqrt{U^{\sfb\sfd}(\nu,x)}-\sqrt{U(\nu)}\right)^2 \sfb_n(\nu,\dd x)\Pi^n(\dd \nu) \\
        &= 2n^{-1}\iint_{\varGamma\times X} \left(\dnabla_{n,\sfb\sfd}\sqrt{U}(\nu,x)\right)^2 \sfb_n(\nu,\dd x)\Pi^n(\dd \nu),
    \end{align*}
    where $U^{\sfb\sfd}(\nu,x)\coloneq U(\sfT^{n,+}(\nu,x))$.
    
As for the hopping case, if $\sfP = U\Pi^n$, then 
    \[
    \sqrt{\vartheta^{n,\sfh}_{\sfP} \left(\sfT^{n,\sfh}_{\#}\vartheta^{n,\sfh}_{\sfP}\right)} = \sqrt{U^\sfh(\nu)U(\nu)}\, \sfh_n(\nu,x,\dd y)\nu(\dd x)\Pi^n(\dd \nu),
    \]
    and the Fisher information reads as
    \begin{align*}
        \sfD_{n,\sfh}(\sfP) &= \iiint_{\varGamma\times X^2} \left(\sqrt{U^\sfh(\nu)}-\sqrt{U(\nu)}\right)^2 \,\sfh_n(\nu,x,\dd y)\nu(\dd x)\Pi^n(\dd \nu) \\
        &= n^{-1}\iiint_{\varGamma\times X^2} \left(\dnabla_{n,\sfh}\sqrt{U}(\nu,x)\right)^2 \,\sfh_n(\nu,x,\dd y)\nu(\dd x)\Pi^n(\dd \nu),
    \end{align*}
    where $U^\sfh(\nu,x)\coloneq U(\sfT^{n,\sfh}(\nu,x))$.
\end{remark}

Putting the birth-and-death and hopping part together, we obtain the formulation of our action-functional
\begin{equation*}
    \sfR_n(\sfP,\sfJ)\coloneq\sfR_{n,\sfb\sfd}(\sfP,\sfJ^{\sfb\sfd})+\sfR_{n,\sfh}(\sfP,\sfJ^{\sfh}),
\end{equation*}
and total Fisher information $\sfD_{n}\coloneq\sfD_{n,\sfb\sfd}+\sfD_{n,\sfh}$.

Formally, one can now check that \eqref{eq:FKEn} can be written via the kinetic relation as
\begin{align*}
\partial_t\sfP_t + \ddiv_{n,\sfb\sfd}\, \sfJ_t^{\sfb\sfd} + \ddiv_{n,\sfh}\, \sfJ_t^{\sfh} = 0,\qquad \left\{\;\;
\begin{aligned}
\sfJ_t^{\sfb\sfd} &=\partial_2\sfR_{n,\sfb\sfd}^*(\sfP_t,-\dnabla_{n,\sfb\sfd} \sfE_n'(\sfP_t)),\\ \sfJ_t^{\sfh}&=\partial_2\sfR_{n,\sfh}^*(\sfP_t,-\dnabla_{n,\sfh} \sfE_n'(\sfP_t)).
\end{aligned}\right.
\end{align*}
Indeed, using the shorthand notation $U,U^{\sfb\sfd}$ and $U^\sfh$, we find 
\begin{equation*}
    \sfE_n'(\sfP)=n^{-1}\log U, \qquad \dnabla_{n,\sfb\sfd} \sfE_n'(\sfP)=\log U^{\sfb\sfd}-\log U, \qquad \dnabla_{n,\sfh} \sfE_n'(\sfP)=\log U^{\sfh}-\log U,
\end{equation*}
and for the derivatives of the dual dissipation potentials,
\begin{equation*}
\begin{aligned}
      \partial_2 \sfR_{n,\sfb\sfd}^*(\sfP,-\dnabla_{n,\sfb\sfd} \sfE_n'(\sfP))&=\bigl(U(\nu)-U^{\sfb\sfd}(\nu,x)\bigr)\sfb(\nu,\dd x)\Pi^n(\dd \nu)=\vartheta^{n,\sfb}_{\sfP}-\sfT^{n,-}_{\#}\vartheta^{n,\sfd}_{\sfP}, \\
      \partial_2 \sfR_{n,\sfh}^*(\sfP,-\dnabla_{n,\sfh} \sfE_n'(\sfP))&=\frac{1}{2}\bigl(U(\nu)-U^{\sfh}(\nu,x)\bigr)\sfh(\nu,x,\dd y)\nu(\dd x)\Pi^n(\dd \nu)=\frac{1}{2}\left(\vartheta^{n,\sfh}_{\sfP}-\sfT^{n,\sfh}_{\#}\vartheta^{n,\sfh}_{\sfP}\right).
\end{aligned}
\end{equation*}
which are the correct fluxes from Lemma \ref{lm:solf}. By a similar calculation, one can check that 
\[
    \sfD_{n,\sfb\sfd}(\sfP)=\sfR_{n,\sfb\sfd}^*(\sfP,-\dnabla_{n,\sfb\sfd} \sfE_n'(\sfP)), \qquad \sfD_{n,\sfh}(\sfP)=\sfR_{n,\sfh}^*(\sfP,-\dnabla_{n,\sfh} \sfE_n'(\sfP)).
\]

The rigorous version of the above statements requires the aforementioned variational formulation. Namely, for any tuple $(\sfP,\sfJ^{\sfb\sfd},\sfJ^\sfh)\in \mathsf{CE}_n$, let $\mathscr{I}_n(\sfP,\sfJ)=\mathscr{I}_n([0,T];\sfP,\sfJ)$, with
\begin{equation}
    \mathscr{I}_n([s,t];\sfP,\sfJ)\coloneq \int_s^t \left(\sfR_n(\sfP_r,\sfJ_r) + \sfD_n(\sfP_r)\right)\dd r + \sfE_n(\sfP_t) - \sfE_n(\sfP_s) = 0.
\end{equation}

The functional corresponds to a well-posed gradient system if $\mathscr{I}_n([s,t],\cdot)\geq 0$ for all $s,t\in [0,T]$ and has a unique null-minimizer, which is called the corresponding \emph{generalized gradient flow solution}. For the latter, the energy-dissipation balance holds:
\[ 
\mathscr{I}_n([s,t];\sfP,\sfJ)=0\qquad\text{for every $[s,t]\subset[0,T]$}.
\]

We now have the following result. 

\begin{theorem}\label{th:fke_main}
Let $\sfP_0\in\text{dom}(\sfE_n)$. Then the functional $\mathscr{I}_n$ on $(\sfP,\sfJ)$ defines a well-posed generalized gradient system, and its unique generalized gradient flow solution is such that $\sfP$ is the solution to \eqref{eq:FKEn} with fluxes given for almost every $t\in(0,T)$ by
\[\sfJ^{\sfb\sfd}_t=\vartheta^{n,\sfb}_{\sfP_t}-\sfT^{n,-}_{\#}\vartheta^{n,\sfd}_{\sfP_t}, \qquad \sfJ^{\sfh}_t= \frac{1}{2}\left(\vartheta^{n,\sfh}_{\sfP_t}-\sfT^{n,\sfh}_{\#} \vartheta^{n,\sfh}_{\sfP_t}\right).\]
\end{theorem}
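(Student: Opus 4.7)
The plan is to follow the variational framework of \cite{PRST2022} adapted to the measure-valued setting, where the jump kernel $\kappa^n$ is unbounded in $\|\nu\|$. The argument decomposes into three pieces: (i) the energy-dissipation inequality $\mathscr{I}_n([s,t];\sfP,\sfJ) \ge 0$ for every $(\sfP,\sfJ)\in \mathsf{CE}_n$ with finite action; (ii) existence of a solution to \eqref{eq:FKEn} that realizes the associated fluxes \eqref{eq:sol_flux}; (iii) identification of this solution as the unique null-minimizer.

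For (i), the key ingredient is a chain rule for $t\mapsto \sfE_n(\sfP_t)$ along curves in $\mathsf{CE}_n$ with $\int_0^T (\sfR_n(\sfP_r,\sfJ_r) + \sfD_n(\sfP_r))\,\dd r < \infty$. Passing to the edge-flux formulation of Remark~\ref{rem:fke_edgeflux}, the symmetry of $\kappa^n(\nu,\dd\eta)\Pi^n(\dd\nu)$ granted by Lemma~\ref{lm:rever} ensures that finite-action fluxes $\sfJ^\circ$ are absolutely continuous with respect to $\kappa^n(\nu,\dd\eta)\Pi^n(\dd\nu)$, so densities $U=\dd\sfP/\dd\Pi^n$, $U^{\sfb\sfd}$, $U^{\sfh}$ are well-defined along jumps. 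I would then adapt the regularization argument in \cite[Sec.~4]{PRST2022}, replacing $U$ by $(U+\varepsilon)/Z_\varepsilon^n$, applying the smooth chain rule for $\upphi$, and passing to the limit $\varepsilon \downarrow 0$ using the 1-homogeneity of $\Upsilon$, the symmetric square-root structure, and the uniform boundedness of the rates from Assumption~\ref{ass:db}. Once the chain rule $\frac{\dd}{\dd t}\sfE_n(\sfP_t) = \langle \sfJ_t, \dnabla_n \sfE_n'(\sfP_t)\rangle$ is established, Legendre duality \eqref{eq:LD} for each of $\sfR_{n,\sfb\sfd}^*$ and $\sfR_{n,\sfh}^*$ yields $\mathscr{I}_n \ge 0$, with equality iff the fluxes satisfy the kinetic relation.

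For (ii), since $\kappa^n$ is unbounded, I would first truncate by $\kappa^{n,M} := \mathbf{1}_{\{\|\nu\|\le M\}}\kappa^n$, for which standard Kolmogorov construction produces a jump process whose law $\sfP^{n,M}$ solves the truncated FKE. The uniform estimate \eqref{eq:remkb} is obtained by testing the weak formulation against $F(\nu)=\|\nu\|\wedge K$ and applying Gr\"onwall, using the uniform bound on $\|\sfb_n\|+\sfd_n+\|\sfh_n\|$ in Assumption~\ref{ass:db}; this mass moment estimate is uniform in $M$, which permits passing to the limit $M\to\infty$ to produce a narrowly continuous $\sfP$ solving \eqref{eq:FKEn} globally. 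Lemma~\ref{lm:solf} then places $(\sfP,\sfJ)$ into $\mathsf{CE}_n$ with the fluxes \eqref{eq:sol_flux}. Moreover, the formal identities displayed in the paragraph preceding the theorem show that these fluxes coincide with $\partial_2 \sfR_{n,\sfb\sfd}^*(\sfP,-\dnabla_{n,\sfb\sfd}\sfE_n'(\sfP))$ and $\partial_2 \sfR_{n,\sfh}^*(\sfP,-\dnabla_{n,\sfh}\sfE_n'(\sfP))$, and that $\sfD_n(\sfP)=\sfR_n^*(\sfP,-\dnabla_n\sfE_n'(\sfP))$, so the Legendre inequality is saturated along this curve and $\mathscr{I}_n=0$.

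For (iii), uniqueness of the null-minimizer follows from the strict convexity of $\uppsi^*$: a null-minimizer $(\tilde\sfP,\tilde\sfJ)$ must satisfy the kinetic relation, which uniquely determines $\tilde\sfJ$ in terms of $\tilde\sfP$, so $\tilde\sfP$ solves \eqref{eq:FKEn} with initial datum $\sfP_0$; uniqueness of the FKE solution (again by Gr\"onwall in the weak formulation, using the boundedness of the rates) closes the argument. The main obstacle is the chain rule under only the finite-action assumption: the density $U$ may vanish or blow up so that $\log U$ is merely measurable, while the finite action only controls the symmetrized quantities $\sqrt{U\,U^{\sfb\sfd}}$ and $\sqrt{U\,U^{\sfh}}$. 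The 'cosh' dissipation potentials are crucial here, since $\Upsilon(w,u,v)$ is finite when $w=0$ regardless of whether $uv=0$, and the De la Vall\'ee Poussin-type control provided by $\uppsi^*$ gives the equi-integrability needed to justify the limit $\varepsilon\downarrow 0$; this is exactly the point at which the techniques of \cite{PRST2022} must be imported and combined with the truncation in $\|\nu\|$ used in (ii).
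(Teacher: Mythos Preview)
Your proposal is correct and follows the same overall architecture as the paper's proof: both use the edge-flux reformulation of Remark~\ref{rem:fke_edgeflux}, truncate the unbounded kernel $\kappa^n$ in $\|\nu\|$ to invoke the results of \cite{PRST2022}, and remove the truncation via the moment control~\eqref{eq:moment}. The paper, however, is more economical: rather than re-deriving the chain rule and the energy-dissipation inequality in the $\sfb\sfd/\sfh$-decomposed picture as you plan to do in step~(i), it computes directly that the edge-flux energy-dissipation functional of \cite{PRST2022} for the kernel $n\kappa^n$ and reference measure $\Pi^n$ satisfies the scaling identity $\mathscr{I}([s,t];\sfP,\sfJ^\circ)=n\,\mathscr{I}_n([s,t];\sfP,\sfJ)$, obtained by decomposing $\kappa^n_{\sfb\sfd}(\nu,\dd\eta)\sfP(\dd\nu)$, $\kappa^n_{\sfb\sfd}(\eta,\dd\nu)\sfP(\dd\eta)$ and the analogous hopping quantities in terms of $\vartheta^{n,\sfb}_\sfP$, $\sfT^{n,\pm}_\#\vartheta^{n,\sfd}_\sfP$, $\vartheta^{n,\sfh}_\sfP$, $\sfT^{n,\sfh}_\#\vartheta^{n,\sfh}_\sfP$. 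This identity transfers well-posedness, the chain rule, and the identification of the null-minimizer from \cite{PRST2022} in one stroke, so the regularization $U\mapsto (U+\varepsilon)/Z_\varepsilon^n$ and the equi-integrability analysis you outline need not be repeated. Your route would work, but the paper's reduction via the scaling identity is the shorter path.
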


\begin{proof}
Recall from Remark \ref{rem:fke_edgeflux} that any $(\sfP,\sfJ)\in \mathsf{CE}_n$ can also be written in terms of edge fluxes $\sfJ^\circ$. Vice versa, any edge flux $\sfJ^\circ(\dd \nu\dd \eta)\ll \kappa^n(\nu,\dd \eta)\Pi(\dd \nu)$ can be decomposed into a birth/death $\sfJ^{\sfb\sfd}$ and $\sfJ^{\sfh}$ part, with 
\begin{align*}
        \sfJ^\circ_t(\dd \nu\dd \eta) &= \frac{n}{2} \int_{x\in X} \delta_{\nu+n^{-1}\delta_x}(\dd \eta)\, \sfJ_t^{\sfb\sfd}(\dd \nu,\dd x)-\frac{n}{2} \int_{x\in X} \delta_{\nu-n^{-1}\delta_x}(\dd \eta)\, \left(\sfT^{n,+}_{\#} \sfJ_t^{\sfb\sfd}\right)(\dd \nu,\dd x) \\
        &\qquad + n \iint_{(x,y)\in X^2} \delta_{\nu+ n^{-1}(\delta_y-\delta_x)}(\dd \eta)\, \sfJ_t^{\sfh}(\dd \nu,\dd x\dd y).
\end{align*}
Moreover, note that for any such $\sfP$,  
\begin{equation}\label{eq:fke_nkb}
    \sup_{t\in [0,T]} \int_{\varGamma} \|\nu\| \dd \sfP_t<\infty. 
\end{equation}

On the other hand, solutions to \eqref{eq:FKEn} in the sense of Definition \ref{def:sol-FKEn} are solutions to the forward Kolmogorov equation corresponding to a jump process with jump kernel $n \kappa^n$ satisfying detailed balance condition w.r.t.\ $\Pi^n$ and the bound \eqref{eq:fke_nkb}. The solution to \eqref{eq:FKEn} can be obtained by an approximation argument, where the jump kernel $\kappa^n$ is truncated so that the results of \cite{PRST2022} apply. Using the property that finite energy implies finite first moment (cf.\ \eqref{eq:moment}), one can pass to the limit in the truncation to obtain a curve satisfying \eqref{eq:sol_c}, a priori for any $C_c(\varGamma)$ functions, and a posteriori for any $B_b(\varGamma)$ functions due to the moment estimate.

Additionally, one can modify the arguments of \cite{PRST2022} to show that this solution is the unique generalized gradient flow solution for the corresponding well-posed functional 
\[     \mathscr{I}([s,t];\sfP,\sfJ^\circ)\coloneq \int_s^t \left(\sfR(\sfP_r,\sfJ^\circ_r) + \sfD(\sfP_r)\right)\dd r + \Ent(\sfP_t|\Pi^n) - \Ent(\sfP_s|\Pi^n),\]
with Fisher information $\sfD(\sfP) \coloneq 2 \calH^2(\sfP n \kappa^n, n \kappa^n \sfP)$, and dual dissipation potential
\begin{align*}
    \sfR^*(\sfP,\upzeta) \coloneq \iint_{\varGamma^2} \Psi^*(\upzeta) \dd \sqrt{(\sfP n \kappa^n)(n \kappa^n \sfP )},
\end{align*}
where $(\sfP \kappa^n)(\dd \nu\dd \eta)\coloneq\kappa^n(\nu,\dd \eta)\sfP(\dd \nu)$ and $(\kappa^n \sfP)(\dd \nu\dd \eta)\coloneq\kappa^n(\eta,\dd \nu)\sfP(\dd \eta)$. 

Note that for birth-and-death cases, we have
\begin{align*}
    \kappa^n_{\sfb\sfd}(\nu,\dd \eta)\sfP(\dd \nu) &=\int_{X} \delta_{\nu+n^{-1}\delta_x}(\dd \eta)\,\vartheta^{n,\sfb}_{\sfP}(\dd \nu, \dd x) + \int_{X} \delta_{\nu-n^{-1}\delta_x}(\dd \eta)\,\vartheta^{n,\sfd}_{\sfP}(\dd \nu, \dd x),\\
    \kappa^n_{\sfb\sfd}(\eta,\dd \nu)\sfP(\dd \eta) &=\int_{X} \delta_{\nu+n^{-1}\delta_x}(\dd \eta)\left(\sfT^{n,-}_{\#}\vartheta^{n,\sfd}_{\sfP}\right)(\dd \nu, \dd x) + \int_{X} \delta_{\nu-n^{-1}\delta_x}(\dd \eta)\left(\sfT^{n,+}_{\#}\vartheta^{n,\sfb}_{\sfP}\right)(\dd \nu, \dd x),
\end{align*} 
and for the hopping part
\begin{align*}
    \kappa^n_{\sfh}(\nu,\dd \eta)\sfP(\dd \nu) &=\iint_{(x,y)\in X^2} \delta_{\nu+n^{-1}(\delta_y-\delta_x)}(\dd \eta)\,\vartheta^{n,\sfh}_{\sfP_t}(\dd \nu,\dd x\dd y), \\
    \kappa^n_{\sfh}(\eta,\dd \nu)\sfP(\dd \eta) &=\iint_{(x,y)\in X^2} \delta_{\nu+n^{-1}(\delta_y-\delta_x)}(\dd \eta)\,(\sfT^{n,\sfh}_{\#}\,\vartheta^{n,\sfh}_{\sfP})(\dd \nu,\dd x\dd y).
\end{align*} 
Thus, we establish that in fact
\[\mathscr{I}([s,t];\sfP,\sfJ^\circ)=n \mathscr{I}_n([s,t];\sfP,\sfJ),\]
and therefore our solution in the sense of Definition \ref{def:sol-FKEn} exists and is the unique gradient flow solution corresponding to the functional $\mathscr{I}_n$.
\end{proof}

\subsection{Uniform estimates}

Here, we establish the necessary estimates to obtain compactness in Theorem \ref{th:convergence}. Let $\frakd$ be the metric on $\calP(\varGamma)$ defined by                       
\begin{equation}\label{eq:fke_dist}
    \frakd(\sfP,\mathsf{Q}) \coloneq\sup_{F\in {\mathcal{F}}} \int_{\varGamma} F \dd (\sfP-\mathsf{Q}),
\end{equation}  
with
\[\mathcal{F}\coloneq\Bigl\{ F\in \Cyl(\varGamma): \sup_{(\nu,x) \in \varGamma\times X} |\nabla_{\varGamma} F(\nu,x)| \leq  1 \Bigr\}.\]
\begin{remark}
    Using a similar superposition principle as in Section \ref{sec:superp}, one can show that $\frakd$ is in fact the $1$-Wasserstein metric on $\calP(\varGamma)$ induced by the total variation metric on $\varGamma$. 
\end{remark}
Note that $\frakd$ is narrowly lower semicontinuous, and convergence in $\frakd$ implies narrow convergence on narrowly compact subsets by density of $\calF$ in $C_b(\varGamma)$. We then have the following uniform bounds. 

\begin{lemma}\label{lm:fke_uniform}
    Let the family $(\sfP_0^n)_{n\geq 1}\subset\calP(\varGamma)$ be such that 
    \[
        \limsup_{n\to \infty} \sfE_n(\sfP_0^n) < +\infty. 
  \]
Then for the corresponding family $(\sfP^n,\sfJ^n)$ of generalized gradient flow solutions
\[
      \limsup_{n\to \infty} \sup_{t\in [0,T]} \sfE_n(\sfP_t^n) < \infty, \qquad
      \limsup_{n\to \infty } \frakd(\sfP_t^n,\sfP_s^n)\leq K |t-s|,
\]
for some constant $K>0$, independent of $s,t\in [0,T]$. 
\end{lemma}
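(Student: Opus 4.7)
The plan is to extract both bounds from the energy-dissipation balance (Theorem~\ref{th:fke_main}), the explicit flux representations of Lemma~\ref{lm:solf}, and the uniform rate bounds from Assumption~\ref{ass:db}. The cosh dissipation structure itself will not be needed directly.

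For the energy bound, observe that since $(\sfP^n,\sfJ^n)$ is the unique generalized gradient flow solution, $\mathscr{I}_n([0,t];\sfP^n,\sfJ^n)=0$ for every $t\in[0,T]$; non-negativity of $\sfR_n$ and $\sfD_n$ then forces the monotonicity $\sfE_n(\sfP_t^n)\le \sfE_n(\sfP_0^n)$. Combined with the hypothesis $\limsup_n \sfE_n(\sfP_0^n)<\infty$, this yields the first conclusion and, via the entropy-moment inequality~\eqref{eq:moment}, a uniform-in-$(n,t)$ bound on the mass moment $\int_\varGamma \|\nu\|\,\sfP_t^n(\dd\nu)$.

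For the Lipschitz estimate, I would test the continuity equation of Definition~\ref{def:CEn} against an arbitrary $F\in\mathcal{F}$. The critical step is to control the discrete gradients $\dnabla_n F$ uniformly over $\mathcal{F}$. For any cylindrical $F(\nu)=g(\langle f_1,\nu\rangle,\ldots,\langle f_M,\nu\rangle)$, differentiating under the integral yields the representations
\begin{align*}
\dnabla_{n,\sfb\sfd}F(\nu,x) &= \int_0^1 \nabla_\varGamma F(\nu+\varepsilon n^{-1}\delta_x,x)\,\dd\varepsilon,\\
\dnabla_{n,\sfh}F(\nu,x,y) &= \int_0^1\bigl[\nabla_\varGamma F(\nu+\varepsilon n^{-1}(\delta_y-\delta_x),y)-\nabla_\varGamma F(\nu+\varepsilon n^{-1}(\delta_y-\delta_x),x)\bigr]\dd\varepsilon,
\end{align*}
so that the constraint $F\in\mathcal{F}$ immediately forces $\|\dnabla_{n,\sfb\sfd}F\|_\infty\le 1$ and $\|\dnabla_{n,\sfh}F\|_\infty\le 2$, uniformly in $n$ and in $F$.

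Plugging in the explicit fluxes from Lemma~\ref{lm:solf} together with the uniform rate bounds from Assumption~\ref{ass:db} gives $\|\sfJ_r^{\sfb\sfd}\|+\|\sfJ_r^{\sfh}\|\le C\bigl(1+\int_\varGamma\|\nu\|\,\sfP_r^n(\dd\nu)\bigr)$, which by the moment bound above becomes uniformly controlled in $r$ for large $n$. Substituting into the continuity-equation identity and taking the supremum over $F\in\mathcal{F}$ produces $\frakd(\sfP_t^n,\sfP_s^n)\le K_n|t-s|$ with $\limsup_n K_n\le K<\infty$, which is the second claim. The main obstacle is precisely the $F$-uniform discrete-gradient bound: the integral representation is what converts the constraint $|\nabla_\varGamma F|\le 1$ on cylindrical functions into a usable $L^\infty$-estimate on $\dnabla_n F$ that is independent of both $n$ and $F$. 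Everything else is routine bookkeeping with bounded rates and first moments.
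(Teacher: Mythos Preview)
Your proposal is correct and follows essentially the same route as the paper: energy decrease from the nonnegativity of $\sfR_n,\sfD_n$ in the energy-dissipation balance, then a uniform mass bound via~\eqref{eq:moment}, and finally the Lipschitz estimate by testing the dynamics against $F\in\mathcal{F}$ and bounding the discrete gradients by the $\varGamma$-gradient. The only cosmetic difference is that the paper substitutes $F$ directly into the solution formula~\eqref{eq:sol_c} and writes the bounds as $|F(\nu\pm n^{-1}\delta_x)-F(\nu)|\le n^{-1}$ (i.e.\ a one-line mean-value estimate), whereas you route through the continuity equation and the flux representation of Lemma~\ref{lm:solf} and write out the integral form of the mean-value theorem; by Lemma~\ref{lm:solf} these are the same computation.
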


\begin{proof}
The estimates for $\sfE_n(\sfP_t^n)$ follow directly from the fact that the driving energy $\sfE_n$ is decreasing along solutions. In particular, we obtain the bound
\[\limsup_{n\to \infty} \sup_{t\in [0,T]} \int_{\varGamma} \|\nu\|\, \sfP^n_t(\dd\nu) < +\infty.\]
Next, take any $F\in \calF$. Note that for any $\nu\in \varGamma_n$ and $x,y\in X$, 
\begin{align*}
| F(\nu +n^{-1}\delta_x)-F(\nu)|&\leq n^{-1}, \\
\int_{X}| F(\nu -n^{-1}\delta_x)-F(\nu)|\,\nu(\dd x)&\leq n^{-1} \|\nu\|,\\
\int_{X}| F(\nu +n^{-1}(\delta_y-\delta_x)-F(\nu)|\nu(\dd x)&\leq 2 n^{-1} \|\nu\|,
\end{align*}
which together with the uniform mass estimates and the uniform bounds on $\sfb_n$, $\sfd_n$ and $\sfh_n$, leads after substituting $F$ in \eqref{eq:sol_c} to a constant $K>0$ such that for every $s,t\in [0,T]$, $F\in \calF$, 
\[
   \left|\int_{\varGamma} F(\nu) \dd (\sfP^n_t-\sfP^n_s)\right| \leq K|t-s|.
\]
Taking the supremum over $F\in \calF$, we derive the asserted uniform estimate for the distance $\frakd$.     
\end{proof}

\section{Large-population limit}\label{sec:limit}
In this section, we consider the large-population limit for the forward Kolmogorov equation \eqref{eq:FKEn}. As indicated in the introduction, we expect the measure-valued jump processes to converge to 
\begin{equation}\label{eq:MF}\tag{\textsf{MFE}}
    \partial_t \nu_t = \scrV[\nu_t],\qquad \scrV = \kappa_{\sfb\sfd} + \kappa_\sfh\colon \varGamma\to\varGamma,
\end{equation}
with limit rates $\kappa_{\sfb\sfd}$ and $\kappa_\sfh$ given in \eqref{eq:rates-limit}, which is equivalent to \eqref{eq:mf-intro} if we take $\nu_t = u_t \pi$ and use the measure-dependent variants of the rates $\sfr_\sfb, \sfr_\sfd$ and  $\sfr_\sfh$ given in \eqref{eq:rates-density}. However, we do not work directly with the measure-valued jump processes. Instead, we consider the forward Kolmogorov equation \eqref{eq:FKEn} and the convergence to \eqref{eq:MF} is done in the sense that $\sfP^n_t \rightharpoonup \delta_{\nu_t}$ narrowly as $n\to \infty$. 

On the level of the measure-valued evolution, we expect \eqref{eq:FKEn} to converge to the Liouville equation
\begin{equation}\label{eq:Liouville}\tag{\textsf{LiE}}
    \partial_t \sfP_t + \mathrm{div}_{\varGamma} \big(\scrV \sfP_t \big) = 0 .
\end{equation}
It is important to recognize that \eqref{eq:MF} and \eqref{eq:Liouville} are closely related. If $\nu$ is a solution of \eqref{eq:MF}, then $\sfP_t = \delta_{\nu_t}$ solves \eqref{eq:Liouville}. Thus, \eqref{eq:Liouville} can be seen as the lifting of \eqref{eq:MF} in $\varGamma$ to $\calP(\varGamma)$.

\medskip

In Section~\ref{sec:liouville}, we show a formal derivation of \eqref{eq:Liouville} as a limit of \eqref{eq:FKEn} and present its generalized gradient flow formulation. Section~\ref{sec:mf} is dedicated to the well-possedness and variational formulation of \eqref{eq:MF}. We explore the connection between \eqref{eq:Liouville} and \eqref{eq:MF} in Section~\ref{sec:superp}, where we introduce the superposition principle and finalize the variational characterization of \eqref{eq:Liouville}. In Section~\ref{sec:convergence}, we prove the main convergence theorem, consequently resulting in Theorem~\ref{th:main}.

\subsection{Liouville equation and its generalized gradient flow formulation}\label{sec:liouville}
First, we give an intuitive explanation of why we expect \eqref{eq:Liouville} as the limit for \eqref{eq:FKEn}. Let $F:\varGamma\to\R$ be a cylindrical function of the form
\begin{equation}\label{eq:cyl-func}
    F(\nu) = g\big( \langle f_1, \nu\rangle, \dots, \langle f_M, \nu \rangle \big), \qquad g\in C_c^\infty(\R^M), \quad f_1 =1, ~~ f_m\in C_b(X), ~m= 2, \dots, M,
\end{equation}
and denote by $\Cyl(\varGamma)$ the collection of such functions.

We recall the birth-and-death generator $L_{\sfb\sfd}^n$ and see how it acts on a cylindrical function $F$. A simple application of the Taylor expansion provides 
\begin{align*}
    n\bigl[ F(\nu + n^{-1}\delta_x) - F(\nu) \bigr] &= \sum_{m=1}^M\partial_m g \big( \langle f_1,\nu\rangle,\ldots,\langle f_m,\nu\rangle \big) f_m(x) + O\big( \tfrac{1}{n} \big) = \nabla_\varGamma F(\nu,x) + O\big( \tfrac{1}{n} \big) \\
    n\bigl[ F(\nu - n^{-1}\delta_x) - F(\nu) \bigr] &= - \sum_{m=1}^M\partial_m g \big( \langle f_1,\nu\rangle,\ldots,\langle f_m,\nu\rangle \big) f_m(x) + O\Big( \tfrac{1}{n} \Big) = -\nabla_\varGamma F(\nu,x) + O\big( \tfrac{1}{n} \big),
\end{align*}
where we recall that $\nabla_{\varGamma} F$ is the $\varGamma$-gradient operator defined on cylindrical functions as
\begin{equation*}
    \nabla_{\varGamma} F(\nu,x) = \sum_{m=1}^M\partial_m g \big( \langle f_1,\nu\rangle,\ldots,\langle f_m,\nu\rangle \big) f_m(x),\qquad (\nu,x)\in\varGamma\times X.
\end{equation*}
Hence, we can write the action of $L^n_{\sfb\sfd}$ on cylindrical functions as
\begin{align*}
    L_{\sfb\sfd}^n F(\nu) 
    &= \int_X \nabla_{\varGamma} F(\nu, x) \, \big( \sfb_n(\nu,\dd x) - \sfd_n(\nu,x) \,\nu(\dd x) \big) + O \big( \tfrac{1}{n} \big).
\end{align*}
Formally, we expect to obtain the pointwise limit birth-and-death generator under appropriate assumptions on the rate functions,
\begin{align*}
    L_{\sfb\sfd}^\infty F(\nu)
    = \lim_{n\to\infty} L_{\sfb\sfd}^n F(\nu)
    &= \int_X \nabla_{\varGamma} F(\nu, x) \, \bigl(\sfb(\nu,\dd x) - \sfd(\nu,x) \nu(\dd x)\bigr) = \int_X \nabla_{\varGamma} F(\nu, x) \, \kappa_{\sfb\sfd} [\nu] (\dd x).
\end{align*}
For the hopping (Kawasaki) generator $L^n_\sfh$, similar calculations give
\begin{align*}
    L_\sfh^n F(\nu) 
    &= \iint_{X\times X} \big( \nabla_{\varGamma} F(\nu, y) - \nabla_{\varGamma} F(\nu, x) \big) \,\sfh_n(\nu,x,\dd y)\,\nu(\dd x) + O\bigl(\tfrac{1}{n}\bigr).
\end{align*}
We incorporate the notation $(\dnabla_\varGamma F)(\nu, x, y) \coloneq \nabla_{\varGamma} F(\nu, y) - \nabla_{\varGamma} F(\nu, x)$ and formally pass $n\to \infty$. We also employ the detailed balance assumption for $\sfh$ to see the following form of the limit hopping generator:
\begin{align*}
    L^\infty_{\sfh} F(\nu) &= \iint_{X^2} (\dnabla_{\varGamma}) F(\nu, x, y) \, \sfh(\nu, x, \dd y) \nu(\dd x) \\
    &= \int_X \nabla_\varGamma F(\nu, x) \Big[  \int_{y\in X} \sfh(\nu, y, \dd x) \nu(\dd y) - \int_{y\in X} \sfh(\nu, x, \dd y) \nu(\dd x)  \Big]
    = \int_X \nabla_{\varGamma} F(\nu, x) \, \kappa_\sfh[\nu] (\dd x).
\end{align*}
Together, we obtain limit generator $L^\infty = L_{\sfb\sfd}^\infty + L_{\sfh}^\infty$, which by duality, gives the Liouville equation \eqref{eq:Liouville}.

\medskip

Having formally obtained \eqref{eq:Liouville} as a reasonable limit for \eqref{eq:FKEn}, we defined the weak solutions for it. 

\begin{definition}[Weak solutions]\label{def:sol-LE} A narrowly continuous curve $P = (P_t)_{t\in [0, T]}$ is a weak solution to \eqref{eq:Liouville} if for all $0 \leq s < t \leq T$ and all $F\in \Cyl(\varGamma)$ it holds that
\begin{equation*}
    \int_{\varGamma} F \dd \sfP_t - \int_{\varGamma} F \dd \sfP_s = \int_s^t \iint_{\varGamma\times X} \nabla_{\varGamma} F (\nu, x)\, \scrV[\nu] (\dd x)\, \sfP_r(\dd \nu) \dd r.
\end{equation*}
\end{definition}

\begin{remark}The Liouville equation \eqref{eq:Liouville} is the transport equation associated with the measure-valued vector field $\scrV$. Therefore, the existence of weak solutions to the Liouville equation \eqref{eq:Liouville} can be proven using the well-posedness of the mean-field equation \eqref{eq:MF}. More precisely, if we define the flow map $\scrS: [0, T] \times \varGamma \to \varGamma$ satisfying the Lagrangian flow
\begin{equation}\tag{\textsf{LF}}
    \partial_t \scrS_t(\nu) = 
    \scrV(\scrS_t(\nu)),
\end{equation}
then one can show that $\sfP_t = (\scrS_t)_{\#} \sfP_0$ is a weak solution to \eqref{eq:Liouville} for any initial datum $\sfP_0\in\varGamma$.
\end{remark}

\begin{remark}\label{rem:li_mb}
The conditions $g\in C_c^\infty(\R^M)$ and $f_1 =1$ in \eqref{eq:cyl-func} imply that $\|\nu\|$ is bounded on the support of $F$ for any $F\in \mathrm{Cyl}(\varGamma)$, and in particular $\mathrm{Cyl}(\varGamma)\subset C_c(\varGamma)$. This frees us from having to prove convergence of mass moments when passing to the limit in \eqref{eq:FKEn}. 
\end{remark}

\subsubsection*{Generalized gradient flow formulation} \label{sec:LiE-GGF}
Now we discuss a variational formulation of the Liouville equation \eqref{eq:Liouville} given by its generalized gradient structure. This structure resembles the generalized gradient structure of the forward Kolmogorov equation \eqref{eq:FKEn} presented in Section~\ref{sec:particle-dynamics}. First we introduce the appropriate continuity equation 
\begin{equation}\label{eq:CE-limit}\tag{\textsf{CE$_\infty$}}
    \partial_t \sfP_t + \mathrm{div}_{\varGamma}\, \sfJ_t = 0,
\end{equation}
and consider arbitrary curves satisfying \eqref{eq:CE-limit} in the following sense.
\begin{definition}[Continuity equation] A tuple $(\sfP,\sfJ^{\sfb\sfd}, \sfJ^{\sfh})$ satisfies the continuity equation \eqref{eq:CE-limit}, or we say for short $(\sfP,\sfJ)\in \mathsf{CE}_\infty$, if
\begin{enumerate}
    \item The curve $[0, T] \ni t \mapsto \sfP_t \in \calP(\varGamma)$ is narrowly continuous with the mass bound:
     \[
        \sup_{t\in [0,T]} \int_{\varGamma} \|\nu\| \dd \sfP_t  < \infty.
    \]
    \item The Borel families $\{\sfJ^{\sfb\sfd}_t\}_{t\in [0,T]} \subset \calM(\varGamma\times X)$ and $\{\sfJ^{\sfh}_t\}_{t\in [0,T]} \subset \calM(\varGamma\times X^2)$ satisfy
    $$
        \int_0^T \|\sfJ^{\sfb\sfd}_t\| +\|\sfJ^{\sfh}_t\| \dd t < \infty.
    $$
    \item For any $0 \leq s < t \leq T$ and $F\in\Cyl (\varGamma)$, it holds that
    \begin{equation}\label{eq:CElim}
           \int_{\varGamma} F \dd P_t - \int_{\varGamma} F \dd P_s = \int_s^t \bigg[ \iint_{\varGamma\times X} \nabla_\varGamma F (\nu, x)\,\sfJ_r^{\sfb\sfd}(\dd\nu\dd x) +  \iiint_{\varGamma\times X^2} \dnabla_\varGamma F (\nu, x, y) \,\sfJ_r^{\sfh}(\dd\nu\dd x\dd y) \bigg]\dd r.
    \end{equation}
     
\end{enumerate}
\end{definition}

To define the energy-dissipation functional \eqref{eq:EDBlimit}, we specify the gradient structure $(\sfR_\infty, \sfR_\infty^*, \sfE_\infty)$ with driving energy
\begin{equation*}
    \calP(\varGamma)\ni\sfP\mapsto\sfE_\infty(\sfP) \coloneq \int_\varGamma \Ent(\nu | \pi) \,\sfP(\dd\nu).
\end{equation*}
Notice that if $\sfE_\infty(\sfP)<+\infty$, then $\Ent(\nu|\pi)<+\infty$ for $\sfP$-almost every $\nu\in\varGamma$, i.e.\ $\sfP$ is supported on finite measures having finite relative entropy w.r.t.\ the activity measure $\pi$.

In analogy to Section~\ref{sec:fke-sol}, we denote the jump intensity measures
\begin{gather*}
    \vartheta^{\sfb}_{\sfP}(\dd \nu,\dd x)\coloneq\sfb(\nu,\dd x)\sfP(\dd \nu), \qquad \vartheta^{\sfd}_{\sfP}(\dd \nu,\dd x)\coloneq\sfd(\nu,x)\nu(\dd x)\sfP(\dd \nu),\\
    \vartheta^{\sfh}_{\sfP}(\dd \nu,\dd x,\dd y) \coloneq \sfh(\nu,x,\dd y)\nu(\dd x)\sfP(\dd \nu),
\end{gather*}
with the limit rates $\sfb, \sfd, \sfh$ defined in Assumption~\ref{ass:db}. As before, the dissipation potential has two components: the birth-and-death dissipation potential $\sfR_{\sfb\sfd}$ and the hopping dissipation potential $\sfR_{\sfh}$, given by
\begin{equation}\label{eq:R-limit}
    \begin{aligned}
    \calP(\varGamma) \times \calM(\varGamma\times X) \ni (\sfP, \sfJ) \mapsto \sfR_{\infty,\sfb\sfd}(\sfP, \sfJ) &\coloneq 2 \iint_{\varGamma\times X} \Upsilon (\sfJ/2, \vartheta^\sfb_\sfP, \vartheta^\sfd_\sfP), \\ 
    \calP(\varGamma) \times \calM(\varGamma\times X^2) \ni (\sfP, \sfJ) \mapsto \sfR_{\infty,\sfh}(\sfP, \sfJ) &\coloneq \iiint_{\varGamma\times X^2} \Upsilon (\sfJ, \vartheta^\sfh_\sfP, \sfT_{\#}\vartheta^\sfh_\sfP),
    \end{aligned}
\end{equation}
where $\sfT(\nu,x,y)\coloneq (\nu,y,x)$. The total dissipation potential is then 
$$
    \sfR_\infty(\sfP, \sfJ) \coloneq \sfR_{\infty,\sfb\sfd}(\sfP, \sfJ^{\sfb\sfd}) + \sfR_{\infty,\sfh}(\sfP, \sfJ^\sfh).
$$
The corresponding dual dissipation potentials are
\begin{equation*}
    \begin{aligned}
    \calP(\varGamma) \times B_b(\varGamma\times X) \ni (\sfP, \upzeta) \mapsto \sfR_{\infty,\sfb\sfd}^* (\sfP, \upzeta) &\coloneq 2 \iint_{\varGamma\times X} \Psi^*(\upzeta) \dd \sqrt{\vartheta^\sfb_\sfP\,\vartheta^\sfd_\sfP}, \\
    \calP(\varGamma) \times B_b(\varGamma\times X^2) \ni (\sfP, \upzeta) \mapsto \sfR_{\infty,\sfh}^* (\sfP, \upzeta) &\coloneq \iiint_{\varGamma\times X^2} \Psi^*(\upzeta) \dd \sqrt{\vartheta^\sfh_\sfP \,\sfT_{\#}\vartheta^\sfh_\sfP}.
    \end{aligned}
\end{equation*}
Furthermore, we define the birth-and-death and hopping Fisher information by means of the Hellinger distance $\calH$:
\begin{equation}\label{eq:Fisher-limit}
    \begin{aligned}
        \calP(\varGamma) \ni \sfP \mapsto \sfD_{\infty,\sfb\sfd} (\sfP) &\coloneq 4 \calH^2(\vartheta^\sfb_\sfP, \vartheta^\sfd_\sfP) \in [0, +\infty], \\
        \calP(\varGamma) \ni \sfP \mapsto \sfD_{\infty,\sfh} (\sfP) &\coloneq 2 \calH^2(\vartheta^\sfh_\sfP, \sfT_{\#}\vartheta^\sfh_\sfP) \in [0, +\infty].
    \end{aligned}
\end{equation}
The total Fisher information is given by $\sfD(\sfP) = \sfD_{\sfb\sfd} (\sfP) + \sfD_{\sfh} (\sfP) $. 
Finally, we introduce the energy-dissipation functional for any $(\sfP, \sfJ) \in \mathsf{CE}_\infty$:
\begin{equation}\label{eq:EDF-limit}
    \mathscr{I}_\infty ([s,t];\sfP,\sfJ) \coloneq \int_s^t \left(\sfR_\infty(\sfP_r,\sfJ_r) + \calD_\infty(\sfP_r)\right)\dd r + \sfE_\infty(\sfP_t) - \sfE_\infty(\sfP_s).
\end{equation}

The components of the generalized gradient structure involved in $\mathscr{I}_\infty ([s,t];\sfP,\sfJ)$ were introduced in the lower-semicontinuous form, hinting that they can be obtained as Gamma-limits of the corresponding structure for \eqref{eq:FKEn} defined in Section~\ref{ss:nggf}. To make a clear connection between the structure $(\sfR_\infty, \sfR_\infty^*, \sfE_\infty)$ and the Liouville equation \eqref{eq:Liouville}, we first need to study the mean-field equation \eqref{eq:MF} in more detail. After establishing the variational formulation for \eqref{eq:MF} in Section~\ref{sec:mf}, we revisit the variational formulation of \eqref{eq:Liouville} and clarify the connection between the two and why $(\sfR_\infty, \sfR_\infty^*, \sfE_\infty)$ gives the appropriate gradient flow formulation for \eqref{eq:LiE}.

\subsection{Mean-field equation}\label{sec:mf}
In this section, we take a closer look at the mean-field equation with velocity field $\scrV=\kappa_{\sfb\sfd} + \kappa_\sfh$ with the limit rates derived above:
\begin{align*}
    \kappa_{\sfb\sfd}[\nu](\dd x) &= \sfb(\nu,\dd x) - \sfd(\nu,x)\nu(\dd x),\\
    \kappa_\sfh[\nu](\dd x) &= \int_{y\in X} \sfh (\nu, y,\dd x)\,\nu(\dd y) - \nu(\dd x) \int_{y\in X} \sfh (\nu,x,\dd y).
\end{align*}
Under Assumption~\ref{ass:db}, the velocity field $\scrV$ can easily be shown to be locally Lipschitz having linear growth, i.e.\ there exists constants $\ell_\scrV$ and $c_\scrV>0$ such that for any $\nu,\eta\in\varGamma$,
\begin{align*}
    \|\scrV[\nu]-\scrV[\eta]\| &\le \ell_\scrV\bigl(1 + \|\nu\| + \|\eta\|\bigl)\|\nu-\eta\|, \\
    \|\scrV[\nu]\| &\le c_\scrV\|\nu\|.
\end{align*}
Therefore, standard ODE theory in Banach spaces (cf.\ \cite{martin1976}) applies and we obtain the following result.

\begin{proposition}
    Let $\overline{u}\in L^1(X,\pi)$. Then there exist a unique differentiable curve $u\in C_b^1((0,T);L^1(X,\pi))$ satisfying the mean-field equation \eqref{eq:mf-intro} with $u_0 = \overline{u}$. 
    
    Consequently, the curve $t\mapsto \nu_t\coloneq u_t\pi$ is a solution of the mean-field equation \eqref{eq:MF} with $\nu_0=\overline{u}\pi$. 
    We call this curve a strong solution of \eqref{eq:MF}.
\end{proposition}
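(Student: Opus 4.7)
The plan is to recast the density equation \eqref{eq:mf-intro} as an autonomous ODE in the Banach space $L^1(X,\pi)$ and invoke the Cauchy--Lipschitz theorem for locally Lipschitz vector fields with at most linear growth, in the form suitable for Banach-space ODEs \cite{martin1976}. Writing the right-hand side as
\[
    F[u](x) \coloneq \sfr_\sfb(u,x) - \sfr_\sfd(u,x)\,u(x) + \int_X \bigl(u(y)-u(x)\bigr)\,\sfr_\sfh(u,x,y)\,\pi(\dd y),
\]
the equation \eqref{eq:mf-intro} reads $\partial_t u_t = F[u_t]$, $u_0 = \overline u$, and the problem reduces to showing that $F$ maps $L^1(X,\pi)$ into itself and is both locally Lipschitz and of linear growth.

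For this step I would exploit the isometric identification $u\leftrightarrow \nu = u\pi$, under which the $L^1$-norm of $u$ equals the total variation $\|\nu\|$, and read off the needed bounds from \eqref{eq:rates-density} together with Assumption~\ref{ass:db}. The uniform sup-bounds on $\sfb_n,\sfd_n,\sfh_n$ pass to the limit rates, yielding in particular $\|\sfr_\sfb(u,\cdot)\|_{L^1(\pi)}\le M$, $0\le \sfr_\sfd\le M$, and $\int_X\sfr_\sfh(u,x,y)\,\pi(\dd y)\le M$ uniformly in $u,x$. Combined with the $\|\cdot\|$-Lipschitz regularity of the three rates, this gives constants $C_1,C_2>0$ such that
\[
    \|F[u]\|_{L^1(X,\pi)}\le C_1\bigl(1+\|u\|_{L^1(X,\pi)}\bigr),
\]
\[
    \|F[u]-F[v]\|_{L^1(X,\pi)}\le C_2\bigl(1+\|u\|_{L^1(X,\pi)}+\|v\|_{L^1(X,\pi)}\bigr)\|u-v\|_{L^1(X,\pi)},
\]
obtained by estimating each of the three summands of $F$ separately.

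Granted these bounds, the Picard--Lindelöf theorem yields a unique maximal solution $u\in C^1([0,T_{\max});L^1(X,\pi))$. A Grönwall argument applied to the linear growth bound gives $\|u_t\|_{L^1(X,\pi)}\le(1+\|\overline u\|_{L^1(X,\pi)})e^{C_1 t}$, which rules out finite-time blow-up and forces $T_{\max}\ge T$; the linear growth then also gives boundedness of $t\mapsto F[u_t]$ on $(0,T)$, hence $u\in C_b^1((0,T);L^1(X,\pi))$. To ensure that $\nu_t\coloneq u_t\pi$ lies in $\varGamma = \calM^+(X)$, nonnegativity must be propagated, which follows by rewriting the ODE as
\[
    \partial_t u_t(x) + m(u_t,x)\,u_t(x) = \sfr_\sfb(u_t,x) + \int_X u_t(y)\,\sfr_\sfh(u_t,x,y)\,\pi(\dd y),
\]
with $m(u,x)\coloneq \sfr_\sfd(u,x) + \int_X\sfr_\sfh(u,x,y)\,\pi(\dd y)\in[0,2M]$, and invoking a standard Duhamel representation in which both the forcing and the initial datum are nonnegative. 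Finally, integrating $\partial_t u_t = F[u_t]$ against an arbitrary $\varphi\in C_b(X)$ w.r.t.\ $\pi$ and using \eqref{eq:rates-density} together with the detailed-balance identities of Assumption~\ref{ass:db} recovers $\partial_t\langle\varphi,\nu_t\rangle = \langle\varphi,\scrV[\nu_t]\rangle$, identifying $\nu_t = u_t\pi$ as a strong solution of \eqref{eq:MF}.

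The main obstacle is the verification of the locally Lipschitz estimate in $L^1(X,\pi)$: although the analogous bound for $\scrV$ on $(\varGamma,\|\cdot\|)$ is already noted in the text, one still has to unpack the three contributions to $F$---in particular the hopping integrand, where the density difference $u(y)-u(x)$ is multiplied by a $u$-dependent rate---and confirm that the constants can be chosen to depend only on $\|u\|_{L^1(X,\pi)}+\|v\|_{L^1(X,\pi)}$, not on stronger norms. Once this is in hand, the remaining steps are routine applications of classical Banach-space ODE theory.
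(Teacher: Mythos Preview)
Your proposal is correct and follows essentially the same approach as the paper: the paper does not give a proof of this proposition beyond the sentence preceding it, where it records that $\scrV$ is locally Lipschitz with linear growth in $\|\cdot\|$ and then invokes ``standard ODE theory in Banach spaces (cf.\ \cite{martin1976})''. Your write-up simply makes this explicit by passing to $L^1(X,\pi)$ via the isometry $u\leftrightarrow u\pi$, verifying the same two estimates for $F$, and adding the routine Gr\"onwall and nonnegativity arguments that the paper leaves implicit.
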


Under the detailed balance assumption \eqref{eq:DB-limit}, \eqref{eq:MF} has a gradient flow formulation. To make it precise, we consider the curves satisfying the continuity equation
\begin{equation}\label{eq:CE-MF}\tag{$\scrC\scrE$}
    \partial_t \nu_t = \lambda^{\sfb\sfd}_t + \overline{\mathrm{div}}\lambda^{\sfh}_t,
\end{equation}
where the discrete divergence
\begin{align*}
    \calM(X^2) \ni j &\mapsto (\overline{\mathrm{div}} j) (\dd x) \coloneq \int_{y\in X} j(\dd x, \dd y) - \int_{y\in X} j(\dd y, \dd x) \in \calM(X),
\end{align*}
is associated to the discrete gradient $\dnabla f(x,y)=f(y)-f(x)$ by duality.

\begin{definition}
    A triple $(\nu, \lambda^{\sfb\sfd}, \lambda^\sfh)$ satisfies \eqref{eq:CE-MF}, or we say $(\nu, 
    \lambda) \in \scrC\scrE$, if
    \begin{enumerate}
        \item The curve $[0, T] \ni t \mapsto \nu_t \in \varGamma$ is $\|\cdot\|$-absolutely continuous.
        \item The Borel families $\{ \lambda^{\sfb\sfd}_t\}_{t\in [0,T]}\subset \calM(X), ~\{ \lambda^{\sfh}_t\}_{t\in [0,T]} \subset \calM(X\times X)$ satisfy
        $$
            \int_0^T \|\lambda^{\sfb\sfd}_t\| + \|\lambda^{\sfh}_t\| \dd t < \infty.
        $$
        \item For any $0\leq s < t \leq T$ and $f\in C_b(X)$, it holds that
        $$
            \int_X f \dd \nu_t - \int_X f \dd \nu_s = \int_s^t \Big[ \int_X f \dd \lambda_r^{\sfb\sfd} + \iint_{X^2} \dnabla f \dd \lambda_r^{\sfh} \Big] \dd r.
        $$
    \end{enumerate}
\end{definition}

\medskip

We claim that \eqref{eq:MF} has a generalized gradient flow formulation $(\scrR, \scrR^*, \scrE)$ with driving energy $\scrE(\nu) = \Ent(\nu|\pi)$. The dissipation potential $\scrR$ comprises the birth-and-death and the hopping parts defined as
\begin{equation*}
    \begin{aligned}
        \varGamma\times \calM(X) \ni (\nu, \lambda^{\sfb\sfd}) \mapsto \scrR_{\sfb\sfd}(\nu, \lambda^{\sfb\sfd}) &\coloneq 2 \int_X \Upsilon(\lambda^{\sfb\sfd}/2, \sfb_\nu, \sfd_\nu), \\
        \varGamma\times \calM(X^2) \ni (\nu, \lambda^{\sfh}) \mapsto \scrR_{\sfh}(\nu, \lambda^{\sfh}) &\coloneq \int_X \Upsilon(\lambda^{\sfh}, \sfh_\nu, \sfT_{\#}\sfh_\nu),
    \end{aligned}
\end{equation*}
where we use the shorthand $\sfb_\nu(\dd x) \coloneq \sfb(\nu, \dd x)$, $\sfd_\nu(\dd x) \coloneq \sfd(\nu, x) \nu(\dd x)$ and $\sfh_\nu(\dd x, \dd y) \coloneq \sfh(\nu, x, \dd y) \nu(\dd x)$. Recall that $\sfT$ is the swapping map $\sfT(\nu,x,y)=(\nu,y,x)$.

The dual dissipation potential $\scrR^*$ is defined as
\begin{equation*}
    \begin{aligned}
    \varGamma\times B_b(X) \ni (\nu, w) \mapsto \scrR^*_{\sfb\sfd}(\nu, w) &\coloneq 2 \int_X \uppsi^*(w) \dd \sqrt{\sfb_\nu \sfd_\nu}, \\
    \varGamma\times B_b(X^2) \ni (\nu, w) \mapsto \scrR^*_{\sfh}(\nu, w) &\coloneq \iint_{X^2} \uppsi^*(w) \dd \sqrt{\sfh_\nu \sfT_{\#} \sfh_\nu},
    \end{aligned}
\end{equation*}
and the Fisher information is given by
\[
    \varGamma \ni \nu \mapsto \scrD(\nu) = 4 \calH^2(\sfb_\nu, \sfd_\nu) + 2\calH^2(\sfh_\nu, \sfT_{\#} \sfh_\nu ) \in [0, +\infty].
\]

Consequently, the energy-dissipation functional $\scrI$ reads
\begin{equation}\label{eq:EDF-MF}
    \scrI([s, t]; \nu, \lambda) \coloneq \int_s^t \scrR(\nu_r, \lambda_r) + \scrD(\nu_r) \dd r + \scrE(\nu_T) - \scrE(\nu_0).
\end{equation}

\begin{remark}
    If $\nu = u \pi$, then we have a simplified expression for the Fisher information:
    \begin{align*}
         \scrD(\nu) = 2 \int_X \left(\sqrt{u(x)} - 1\right)^2\sfr_\sfb (u, x) \pi (\dd x) + \iint_{X^2} \left( \sqrt{u(y)} - \sqrt{u(x)} \right)^2 \sfr_\sfh (u, x, y) \pi(\dd x) \pi(\dd y)
    \end{align*}
    with the density-dependent rates $\sfr_\sfb$ and $\sfr_\sfh$ defined in \eqref{eq:rates-density}.   Moreover, a formal calculation shows that 
    \[
    	\partial_2 \scrR^*\left(-\calE',-\dnabla \calE'\right)=\left(\sfb_{\nu}-\sfd_{\nu},\frac{1}{2}(\sfh_{\nu}-\sfT_{\#}\sfh_{\nu})\right), \qquad \scrD(\nu)=\scrR^*\left(-\calE',-\dnabla \calE'\right).
    \]    
\end{remark}

\begin{theorem}[Variational characterization of \eqref{eq:MF}] For any $(\nu, \lambda^{\sfb\sfd}, \lambda^\sfh) \in \scrC\scrE$ with $\scrE(\nu_0) < \infty$, we have $\scrI([s, t]; \nu, \lambda) \geq 0$ and $\nu_t$ is the unique strong solution to \eqref{eq:MF} if and only if $\scrI([0, T]; \nu, \lambda) = 0$ and 
\[ \lambda^{\sfb \sfd}=\sfb_{\nu}-\sfd_{\nu}, \qquad \lambda^{\sfh}=\frac{1}{2}(\sfh_{\nu}-\sfT_{\#}\sfh_{\nu}).\]

Moreover, if $\scrI([0, T]; \nu, \lambda) < \infty$, then the chain rule for $\scrE$ holds, i.e.\
$$
    \text{$t\mapsto \scrE(\nu_t)$ is absolutely continuous},\qquad \frac{\dd}{\dd t} \scrE(\nu_t) = \int_X \log \frac{\dd \nu_t}{\dd\pi} \dd \lambda_t\qquad\text{for almost every $t\in(0,T)$}.
$$
\end{theorem}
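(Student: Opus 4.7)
My plan is to combine a chain rule for $\scrE$ along curves in $\scrC\scrE$ with the Legendre duality inequality $\scrR(\nu,\lambda) + \scrR^*(\nu,w) \ge \langle\lambda,w\rangle$ evaluated at the thermodynamic force $w = -\scrE'(\nu) = -\log u$ (and $-\dnabla\scrE'(\nu)$ for hopping). By the detailed-balance identity \eqref{eq:DB-limit} and the computation indicated in the preceding remark, this choice satisfies $\scrR^*(\nu,w) = \scrD(\nu)$ and $\partial_2 \scrR^*(\nu,w) = (\sfb_\nu-\sfd_\nu,\,\tfrac12(\sfh_\nu-\sfT_\#\sfh_\nu))$; these structural identities are exactly what converts the resulting gradient-flow inequality into \eqref{eq:MF}.

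\textbf{Chain rule.} Assume $\scrI([0,T];\nu,\lambda) < \infty$. Finiteness of $\scrR(\nu_r,\lambda_r)$ forces each flux to be absolutely continuous with respect to its intensity measure, since $\Upsilon(w,u,v) = +\infty$ whenever $w \neq 0$ and $uv = 0$. Combined with $\scrE(\nu_0)<\infty$ (which gives $\nu_0 \ll \pi$), the continuity equation then propagates absolute continuity of $\nu_t$ w.r.t.\ $\pi$, so that $u_t := d\nu_t/d\pi$ is well-defined for every $t$. I would differentiate $\scrE(\nu_t)$ by testing \eqref{eq:CE-MF} against the bounded truncation $\log((u_t+\varepsilon)/(1+\varepsilon))$ and passing $\varepsilon\downarrow 0$. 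Integrability of $\int \log u_r\,d\lambda^{\sfb\sfd}_r$ and $\iint \dnabla\log u_r\,d\lambda^\sfh_r$ is obtained by splitting into positive and negative parts and using the Young-type inequality $|zw| \le \uppsi(w) + \uppsi^*(z)$: the dissipation bound $\scrR<\infty$ controls the $\uppsi$-terms, while the Fisher information $\scrD<\infty$, which bounds $(\sqrt{u}-1)^2\sfr_\sfb$ and $(\sqrt{u(y)}-\sqrt{u(x)})^2\sfr_\sfh$, controls the logarithmic factors uniformly in $\varepsilon$. The outcome is
\begin{equation*}
    \scrE(\nu_t) - \scrE(\nu_s) = \int_s^t \left[\int_X \log u_r\,d\lambda^{\sfb\sfd}_r + \iint_{X^2} \dnabla \log u_r\,d\lambda^\sfh_r\right]dr.
\end{equation*}

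\textbf{Assembling the EDB and characterizing minimizers.} Summing the two Legendre inequalities
\begin{equation*}
    \scrR_{\sfb\sfd}(\nu_r,\lambda^{\sfb\sfd}_r) + \scrD_{\sfb\sfd}(\nu_r) \ge -\int_X \log u_r\,d\lambda^{\sfb\sfd}_r, \qquad \scrR_\sfh(\nu_r,\lambda^\sfh_r) + \scrD_\sfh(\nu_r) \ge -\iint_{X^2} \dnabla \log u_r\,d\lambda^\sfh_r,
\end{equation*}
integrating over $[s,t]$, and invoking the chain rule yields $\scrI([s,t];\nu,\lambda) \ge 0$. Equality on $[0,T]$ forces equality in Legendre duality for a.e.\ $r$, which by the structural identity above pins down $\lambda^{\sfb\sfd}_r = \sfb_{\nu_r}-\sfd_{\nu_r}$ and $\lambda^\sfh_r = \tfrac12(\sfh_{\nu_r}-\sfT_\#\sfh_{\nu_r})$. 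Inserting these into the continuity equation and noting that $\overline{\mathrm{div}}\,\tfrac12(\sfh_\nu-\sfT_\#\sfh_\nu) = \kappa_\sfh[\nu]$ and $\sfb_\nu - \sfd_\nu = \kappa_{\sfb\sfd}[\nu]$ recovers \eqref{eq:MF}; uniqueness of the strong solution via the preceding well-posedness result closes the characterization.

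\textbf{Main obstacle.} The hardest step is the chain rule: $\|\cdot\|$-absolute continuity of $t\mapsto\nu_t$ alone is insufficient to differentiate $\scrE(\nu_t)$ classically because $\log u$ is unbounded near $u=0$ and $u=\infty$. The $\cosh$ structure is tailored to this difficulty: the exponential growth of $\uppsi^*(z)=2(\cosh(z/2)-1)$ dominates the logarithmic singularity of $\scrE'$, so pairing $\scrR$ with $\uppsi$ together with the Fisher information bound yields the integrability needed to push the truncation limit through, unlike purely quadratic dissipations where such estimates may fail without additional structural assumptions.
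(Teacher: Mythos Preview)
Your proposal is correct and follows essentially the same approach as the paper. The paper's proof is a one-line reference to \cite{PRST2022} and \cite{hoeksema2023} via a ``regularized entropy functional'', and your truncation $\log((u_t+\varepsilon)/(1+\varepsilon))$ together with the Legendre duality argument and the $\uppsi/\uppsi^*$ Young inequality is precisely the machinery developed in those references; you have correctly identified both the structure of the argument and the role of the cosh dissipation in making the chain rule go through.
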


\begin{proof}
The proof follows by using a regularized entropy functional and combining arguments from \cite{PRST2022} for the jump setting, and \cite{hoeksema2023} for the case of birth and death. 
\end{proof}

\subsection{Superposition principle}\label{sec:superp}

The superposition principle, which makes the connection between the mean-field equation \eqref{eq:MF} and the Liouville equation \eqref{eq:Liouville}, is an essential tool for proving uniqueness and variational characterization of solutions. Intuitively, it says that a curve satisfying \eqref{eq:CE-limit} with finite action can be represented as a superposition of curves satisfying \eqref{eq:CE-MF}. 

With the variational formulation for \eqref{eq:MF} at hand, we realize that the energy-dissipation functional $\scrI_\infty$ given in \eqref{eq:EDF-limit} for \eqref{eq:Liouville} is "lifted" from the corresponding functional \eqref{eq:EDF-MF}. First observe that if $\sfE_\infty(\sfP) < \infty$, then the Fisher information has the following form:
\begin{align*}
    \int_\varGamma \scrD(\nu)\, \sfP(\dd\nu)
    &= 4 \int_\varGamma \calH^2(\sfb_\nu, \sfd_\nu) \,\sfP(\dd\nu) + 2 \int_\varGamma \calH^2(\sfh_\nu, \sfT_{\#} \sfh_\nu)  \,\sfP(\dd \nu) \\
    &= 4 \calH^2(\vartheta^\sfb_\sfP, \vartheta^\sfd_\sfP) + 2 \calH^2(\vartheta^\sfh_\sfP, \sfT_{\#}\vartheta^\sfh_\sfP) = \sfD_\infty (\sfP).
\end{align*}
It also holds that $\sqrt{\vartheta^\sfb_\sfP\,\vartheta^\sfd_\sfP}(\dd \nu \dd x) = \theta^{\sfb\sfd}_\nu (\dd x) \sfP(\dd \nu)$ and $\sqrt{\vartheta^\sfh_\sfP \, \sfT_{\#}\vartheta^\sfh_\sfP}
(\dd\nu\dd x\dd y) = \theta^\sfh_{\nu} (\dd x \dd y) \sfP(\dd\nu)$. If $\sfE_\infty(\sfP) < \infty$, one can also show the disintegration property for fluxes
\begin{align*}
    \sfJ^{\sfb\sfd}(\dd\nu \dd x) = \lambda^{\sfb\sfd}_\nu (\dd x) \sfP(\dd\nu) \quad \text{and} \quad \sfJ^{\sfh}(\dd\nu \dd x \dd y) = \lambda^{\sfh}_\nu (\dd x \dd y) \sfP(\dd\nu).
\end{align*}

Approaches based on superposition principles similar to ours were also employed in \cite{erbar2023gradient, erbaretal2016, hoeksema2023}, where they are applied to transport equations lifted from the Boltzmann equation,
mean-field jump dynamics, and mean-field birth-and-death dynamics respectively. In all these cases, the proof of the superposition principle relies on the superposition principle for
solutions of the continuity equation on $\R^\N$ established in \cite{ambrosio2014well}. 

\begin{theorem}[Superposition principle] \label{th:super-principle}
    Let $(\sfP, \sfJ) \in \textsc{CE}_\infty$ with 
    $$
        \int_0^T \sfR_\infty(\sfP_t, \sfJ_t)\,\dd t < \infty. 
    $$
Then there exists a Borel probability (path) measure $\mathbb{Q}\in\calP(C([0,T]; \varGamma))$ such that
\begin{enumerate}
    \item the time marginals are
    \[
        (e_t)_{\#} \mathbb{Q} = \sfP_t\qquad\forall\,t\in[0,T],
    \]
    where $e_t\colon C([0,T]; \varGamma)\to \varGamma$, $e_t(\nu)=\nu_t$ is the time evaluation map;
    \item $\mathbb{Q}$ is concentrated on $\|\cdot\|$-absolutely continuous curves $\nu\in\mathrm{AC} ([0,T]; (\varGamma, \|\cdot\|))$ with $(\nu, \lambda)\in \scrC\scrE$ for some Borel family $\{\lambda_t\}_{t\in[0,T]}\subset \calM(X)$ of fluxes;
    \item the following representation holds
    \[
        \int_0^T \sfR_\infty (\sfP_t, \sfJ_t) \dd t = \int_{C([0,T]; \varGamma)} \Big( \int_0^T \scrR (\nu_t, \lambda_t) \dd t \Big) \, \mathbb{Q}(\dd \nu).
    \]
\end{enumerate}
\end{theorem}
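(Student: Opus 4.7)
The plan is to reduce the superposition principle on $\varGamma$ to the abstract superposition principle of \cite{ambrosio2014well} on $\R^\N$, by embedding $\varGamma$ into $\R^\N$ through a countable family of continuous test functions, pushing forward both $(\sfP,\sfJ)$ and the velocity field, applying Ambrosio-Trevisan, and then lifting the resulting path measure back to $C([0,T];\varGamma)$.

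First, I would disintegrate the fluxes. The finite action $\int_0^T \sfR_\infty(\sfP_t,\sfJ_t)\,\dd t<\infty$ together with the structure of $\Upsilon$ forces $\sfJ^{\sfb\sfd}_t \ll \sqrt{\vartheta^\sfb_{\sfP_t}\,\vartheta^\sfd_{\sfP_t}}$ and $\sfJ^\sfh_t \ll \sqrt{\vartheta^\sfh_{\sfP_t}\,\sfT_\#\vartheta^\sfh_{\sfP_t}}$, hence both are absolutely continuous w.r.t.\ $\sfP_t$ in their first marginal. A standard measurable-selection/disintegration argument then yields Borel families $\nu\mapsto \lambda^{\sfb\sfd}_{t,\nu}\in\calM(X)$ and $\nu\mapsto \lambda^\sfh_{t,\nu}\in\calM(X^2)$ with $\sfJ^{\sfb\sfd}_t(\dd\nu\dd x)=\lambda^{\sfb\sfd}_{t,\nu}(\dd x)\,\sfP_t(\dd\nu)$ and analogously for $\sfJ^\sfh$, together with the pointwise action identity $\sfR_\infty(\sfP_t,\sfJ_t)=\int_\varGamma \scrR(\nu,\lambda_{t,\nu})\,\sfP_t(\dd\nu)$ by $1$-homogeneity of $\Upsilon$. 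Next, I would fix a countable family $\{f_k\}_{k\ge 1}\subset C_b(X)$ with $f_1\equiv 1$ separating points of $\varGamma$, and define the Borel injection $\iota\colon\varGamma\to\R^\N$, $\iota(\nu)=(\langle f_k,\nu\rangle)_{k\ge 1}$. Setting $\hat\sfP_t\coloneq\iota_\#\sfP_t$ and, for $z=\iota(\nu)\in\iota(\varGamma)$,
\[
b_k(t,z)\coloneq \int_X f_k\,\dd\lambda^{\sfb\sfd}_{t,\nu} + \iint_{X^2}(f_k(y)-f_k(x))\,\dd\lambda^\sfh_{t,\nu},
\]
a direct check against cylindrical test functions shows that $(\hat\sfP,b\hat\sfP)$ satisfies the continuity equation on $\R^\N$, with the integrability $\int_0^T\int_{\R^\N}\sum_k 2^{-k}(1\wedge |b_k|)\dd\hat\sfP_t\dd t<\infty$ following from the total-variation bound $\|\lambda^{j}_{t,\nu}\|\in L^1([0,T]\times\varGamma;\dd t\otimes\sfP_t)$ induced by the finite $\sfR_\infty$-action.

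Applying \cite[Theorem 7.1]{ambrosio2014well} to the lifted system produces $\hat{\mathbb{Q}}\in\calP(C([0,T];\R^\N))$ concentrated on absolutely continuous solutions of $\dot\eta=b(t,\eta)$ with $(e_t)_\#\hat{\mathbb{Q}}=\hat\sfP_t$. To transfer this back to $\varGamma$, I would argue that $\hat{\mathbb{Q}}$-a.e.\ path lies in $C([0,T];\iota(\varGamma))$: the set $\iota(\varGamma)$ is Borel in $\R^\N$ since it is cut out by positivity conditions $\langle \varphi,\nu\rangle\ge 0$ for countably many nonnegative $\varphi$ approximable from $\{f_k\}$, and these conditions are preserved by the flow because $b$ was defined via positive measures $\lambda^\bullet_{t,\nu}$. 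Combined with the fact that $(e_t)_\#\hat{\mathbb{Q}}=\iota_\#\sfP_t$ charges only $\iota(\varGamma)$ for a.e.\ $t$ and $t\mapsto \hat\eta_t$ is continuous, we conclude $\hat\eta_t\in\iota(\varGamma)$ for all $t$; define $\mathbb{Q}\coloneq (\iota^{-1})_\#\hat{\mathbb{Q}}$. Properties (i) and (ii) are then immediate: along $\mathbb{Q}$-a.e.\ curve $\nu$, testing the ODE in $\R^\N$ with $f_k$ gives $(\nu,\lambda^{\sfb\sfd}_{\cdot,\nu_\cdot},\lambda^\sfh_{\cdot,\nu_\cdot})\in\scrC\scrE$, and $\|\cdot\|$-absolute continuity of $t\mapsto\nu_t$ follows from the $L^1$-in-time total variation of its flux. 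Property (iii) is a direct consequence of Fubini applied to the pointwise action identity obtained in the first step, using $(e_t)_\#\mathbb{Q}=\sfP_t$.

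The main obstacle is the lifting step: upgrading continuity in the product topology of $\R^\N$ to narrow (in fact $\|\cdot\|$-absolute) continuity in $\varGamma$, and verifying that the flow genuinely stays inside $\iota(\varGamma)$. The continuity upgrade is delicate because $\iota$ is continuous but its inverse is only narrowly continuous on mass-bounded sets; it will be resolved by combining the uniform mass bound $\sup_{t\in[0,T]}\int_\varGamma \|\nu\|\dd\sfP_t<\infty$ (which gives tightness of the paths) with the integrated flux bound (which gives equicontinuity in $\|\cdot\|$). A secondary technical issue is the measurable-selection needed to obtain jointly Borel disintegrations $(t,\nu)\mapsto \lambda^\bullet_{t,\nu}$; this is handled exactly as in the analogous superposition arguments in \cite{erbar2023gradient, erbaretal2016, hoeksema2023}, which serve as the closest templates for the present setting.
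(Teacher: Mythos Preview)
Your proposal is correct and follows essentially the same route as the paper: embed $\varGamma$ into $\R^\N$ via a countable family $\{f_k\}\subset C_b(X)$ with $f_1\equiv 1$, push forward $(\sfP,\sfJ)$ to a continuity equation on $\R^\N$ with the vector field built from the disintegrated fluxes $\lambda^{\sfb\sfd}_{t,\nu},\lambda^\sfh_{t,\nu}$, apply the Ambrosio--Trevisan superposition principle \cite[Theorem~7.1]{ambrosio2014well}, and lift the resulting path measure back to $C([0,T];\varGamma)$. Your write-up is in fact more explicit than the paper's about the disintegration step and the lifting back to $\varGamma$ (the paper's proof stops right after invoking Ambrosio--Trevisan and leaves these points implicit, deferring to the analogous arguments in \cite{erbar2023gradient, erbaretal2016, hoeksema2023}).
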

\begin{proof}
   Let $\{f_m\}_{m\in\N} \subset C_b(\Omega)$ be a countable dense set with with $f_1 = 1$. We define
    \begin{equation*}
        \varGamma\ni \nu \mapsto \mathbb{T}(\nu) \coloneq \Big( \int_X f_1 \dd\nu, \int_x f_2 \dd\nu, \dots \Big) \in \R^{\N}.
    \end{equation*}
    We further set $\sigma_t \coloneq \mathbb{T}_{\#} \sfP_t \in \calP(\R^{\N})$ and define a vector field $W\colon [0,T]\times \R^n \to \R^n$ via its components, $W_m\coloneq W_m^{\sfb\sfd} + W_m^\sfh$, $m\in\N$, where
    \[
        W^{\sfb\sfd}_m(t, z) \coloneq \int_X f_m(x) \lambda^{\sfb\sfd}_{t,\mathbb{T}^{-1}(z)} (\dd x),\qquad
        W^{\sfh}_m(t, z) \coloneq \iint_{X^2} \bigl(f_m(y) - f_m(x)\bigr) \lambda^{\sfh}_{t,\mathbb{T}^{-1}(z)} (\dd x \dd y).
    \]
    The pair $(\sigma, W)$ satisfies the continuity equation in the sense that, for all $g\in\Cyl (\R^{\N})$, we have
    $$
        \int_{\R^\N} g \dd\sigma_t - \int_{\R^\N} g \dd\sigma_s  = \int_s^t \int_{\R^\N} \nabla g \cdot W_r \,\sigma_r (\dd\nu) \dd r
    $$
    To see this we define $F \coloneq g\circ \mathbb{T}$ and use that $(\sfP,\sfJ)\in\text{CE}_\infty$ to deduce
    \begin{align*}
        \int_{\R^\N} g \dd\sigma_t - \int_{\R^\N} g \dd\sigma_s
        &= \int_\varGamma F \dd\sfP_t - \int_\varGamma F \dd\sfP_s \\
        &= \int_s^t \bigg[ \iint_{\varGamma\times X} \nabla_\varGamma F (\nu, x)\,\sfJ_r^{\sfb\sfd}(\dd\nu\dd x) +  \iiint_{\varGamma\times X^2} \dnabla_\varGamma F (\nu, x, y) \,\sfJ_r^{\sfh}(\dd\nu\dd x\dd y) \bigg]\dd r \\
        &= \int_s^t \bigg[ \int_{\varGamma} \sum_{m=1}^M \partial_m g (\mathbb{T}(\nu)) \int_X f_m(x) \lambda^{\sfb\sfd}_r(\dd x) \,\sfP_r (\dd\nu) \\
        &\hspace{2cm}+  \int_{\varGamma} \sum_{m=1}^M \partial_m g (\mathbb{T}(\nu)) \iint_{X^2} \bigl(f_m(y) - f_m(x)\bigr) \lambda_r^{\sfh}(\dd x\dd y) \,\sfP_r (\dd\nu) \bigg]\dd r \\
        &= \int_s^t \bigg[ \int_{\R^\N} \nabla g(z) \cdot W^{\sfb\sfd}_r(z) \,\sigma_r (\dd\nu) +  \int_{\R^\N} \nabla g(z) \cdot W^{\sfh}_r(z) \,\sigma_r (\dd\nu) \bigg]\dd r.
    \end{align*}
    Moreover, the defined vector field $W$ belongs to $L^1([0,T]\times\R^\N, \calL|_{[0,T]}\otimes\sigma_t)$, since
    \begin{align*}
        \int_0^T \int_{\R^\N} |W_{m}(t, z)| \dd \sigma_t \dd t
        = \int_0^T \int_\varGamma |W_{m}(t, \mathbb{T}(\nu))|\, \sfP_t(\dd\nu) \dd t
        \leq \int_0^T \sfR_\infty(\sfP_t, \sfJ_t)\,\dd t) < \infty. 
    \end{align*}

    We are now in the position to apply \cite[Theorem~7.1]{ambrosio2014well} to conclude that there exists a Borel probability measure $\mathbb{q}$ in $\calP(C([0,T]; \R^\N))$ satisfying $(e_t)_{\#} \mathbb{q} = \nu_t$ for all $t\in [0,T]$, and is concentrated on abosolutely continuous curves $z\in AC([0,T];\R^\N)$, which are solutions to $\Dot{z}_t = W_t(z_t)$ for almost every $t\in [0,T]$.
\end{proof}

\begin{theorem}[Variational characterization of \eqref{eq:Liouville}]\label{th:superposition} For any $(\sfP, \sfJ)\in \mathsf{CE}_\infty$ with $\sfP_0\in \text{dom}(\sfE_\infty)$, the energy-dissipation functional $\scrI_\infty$ is finite if and only if there exists a Borel probability measure $\mathbb{Q} \in \calP(C([0,T];\varGamma))$ such that
\begin{enumerate}
    \item the time marginals are $(e_t)_{\#} \mathbb{Q} = \sfP_t$ for all $t\in [0, T]$;
    \item $\mathbb{Q}$ is concentrated on the family of $\|\cdot\|$-absolutely continuous curves $\nu \in \mathrm{AC}([0, T]; (\varGamma, \|\cdot\|))$ with $(\nu, \lambda) \in \scrC\scrE$, where $\lambda=\lambda^{\sfb\sfd}+\lambda^\sfh$ satisfies $\sfJ^{\sfb\sfd}_t = \lambda^{\sfb\sfd}_t \sfP_t $ and $\sfJ_t^\sfh = \lambda_t^\sfh\sfP_t$ for almost every $t\in[0,T]$;
    \item the following representation holds
    $$
        \scrI_\infty([s,t]; \sfP, \sfJ) = \int_{C([0,T];\varGamma)} \scrI ([s,t]; \nu, \lambda) \dd \mathbb{Q}.
    $$
\end{enumerate}
In particular, $\scrI_\infty ([0,T]; \sfP, \sfJ) \geq 0$ and equal to $0$ if and only if $\sfP_t$ is the weak solution to \eqref{eq:Liouville} with 
\begin{align*}
    \sfP_t &= (\scrS_t)_{\#} \sfP_0\qquad\text{for every $t\in[0,T]$},\;\;\text{and fluxes}\\ 
     \sfJ^{\sfb\sfd}_{\sfP_t} &=\vartheta^{\sfb}_{\sfP_t}-\vartheta^{\sfd}_{\sfP_t}, \qquad \sfJ^{\sfh}= \frac{1}{2}\left(\vartheta^{\sfh}_{\sfP_t}-\sfT_{\#}\vartheta^{\sfh}_{\sfP_t}\right) \quad \text{for almost every } t\in [0,T].
\end{align*}
\end{theorem}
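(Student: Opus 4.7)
My plan for proving this theorem is to leverage Theorem~\ref{th:super-principle} (the superposition principle), combined with a careful disintegration of $\sfJ$ with respect to $\sfP$ and a termwise identification of $\scrI_\infty$ with an integral of $\scrI$ along random curves.

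\textbf{Forward direction.} Assume $\scrI_\infty([0,T];\sfP,\sfJ) < +\infty$. Since each nonnegative summand is then integrable (using $\sfE_\infty \ge 0$ and the assumption $\sfP_0\in\text{dom}(\sfE_\infty)$), one has $\int_0^T \sfR_\infty(\sfP_t,\sfJ_t)\,\dd t < +\infty$, so Theorem~\ref{th:super-principle} yields $\mathbb{Q}\in\calP(C([0,T];\varGamma))$ with $(e_t)_\#\mathbb{Q}=\sfP_t$, concentrated on $\|\cdot\|$-absolutely continuous curves admitting Borel fluxes $\lambda=\lambda^{\sfb\sfd}+\lambda^\sfh$ with $(\nu,\lambda)\in\scrC\scrE$, together with the action representation. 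A Borel disintegration of $\sfJ^{\sfb\sfd}_t$ and $\sfJ^\sfh_t$ with respect to their first marginal $\sfP_t$ yields the required $\sfJ^{\sfb\sfd}_t=\lambda^{\sfb\sfd}_{t,\nu}\sfP_t$ and $\sfJ^{\sfh}_t=\lambda^\sfh_{t,\nu}\sfP_t$, and the uniqueness of disintegration lets me identify $\lambda_{t,\nu}$ with the flux carried by the curve $\nu$ under $\mathbb{Q}$.

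\textbf{Termwise identification of $\scrI_\infty$.} To obtain $\scrI_\infty = \int \scrI\,\dd\mathbb{Q}$, I combine three ingredients. (i) By $(e_t)_\#\mathbb{Q}=\sfP_t$ and linearity of $\sfE_\infty$ in $\sfP$, the energy difference factorizes as $\sfE_\infty(\sfP_t)-\sfE_\infty(\sfP_s) = \int[\scrE(\nu_t)-\scrE(\nu_s)]\,\mathbb{Q}(\dd\nu)$. (ii) The identity $\sfD_\infty(\sfP) = \int \scrD(\nu)\sfP(\dd\nu)$, stated earlier under $\sfE_\infty(\sfP)<+\infty$, combined with Fubini gives $\int_s^t\sfD_\infty(\sfP_r)\,\dd r = \int\!\int_s^t\scrD(\nu_r)\,\dd r\,\mathbb{Q}(\dd\nu)$; propagation of finite energy along the curve follows from the one-sided energy inequality deduced from $\scrI_\infty<+\infty$ via the chain rule. (iii) The action representation is supplied directly by Theorem~\ref{th:super-principle}. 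Summing the three yields the claimed representation, which in turn makes the backward implication immediate by reversing the same identities.

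\textbf{Null-minimizer characterization.} Since $\scrI([s,t];\nu,\lambda)\ge 0$ for $(\nu,\lambda)\in\scrC\scrE$ by the variational characterization of \eqref{eq:MF}, the representation gives $\scrI_\infty\ge 0$; moreover $\scrI_\infty([0,T];\sfP,\sfJ)=0$ forces $\scrI([0,T];\nu,\lambda)=0$ for $\mathbb{Q}$-almost every $\nu$. The assumption $\sfP_0\in\text{dom}(\sfE_\infty)$ ensures $\scrE(\nu_0)<+\infty$ for $\mathbb{Q}$-a.e.\ $\nu$, so each such $\nu$ is the unique strong solution of \eqref{eq:MF} with fluxes $\lambda^{\sfb\sfd}=\sfb_\nu-\sfd_\nu$ and $\lambda^\sfh=\tfrac12(\sfh_\nu-\sfT_\#\sfh_\nu)$. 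Uniqueness of the Lagrangian flow then gives $\nu_t=\scrS_t(\nu_0)$ $\mathbb{Q}$-a.s., whence $\sfP_t = (e_t)_\#\mathbb{Q} = (\scrS_t)_\#\sfP_0$, and pushing the pointwise flux formulas through the disintegration produces the asserted forms of $\sfJ^{\sfb\sfd}_t$ and $\sfJ^\sfh_t$.

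\textbf{Main obstacle.} The delicate point is the chain-rule/finite-energy propagation that legitimizes applying the identity $\sfD_\infty(\sfP_r)=\int\scrD(\nu_r)\sfP_r(\dd\nu)$ for every (not merely a.e.) $r\in[0,T]$: finiteness of $\scrI_\infty$ controls the Fisher information only in an averaged sense, and one must show that $r\mapsto\sfE_\infty(\sfP_r)$ is absolutely continuous with the expected derivative. I would handle this by lifting the mean-field chain rule curve-by-curve through $\mathbb{Q}$---i.e.\ apply the $\scrE$-chain rule to each $\nu$ with $\scrI(\nu,\lambda)<+\infty$ (which holds $\mathbb{Q}$-a.s.\ by the action representation together with integrability of $\scrD$ along $\mathbb{Q}$), then integrate against $\mathbb{Q}$ and invoke dominated convergence. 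A second technical step is verifying the Borel measurability of the disintegration kernels $\lambda_{t,\nu}$, which follows from standard selection arguments on Polish spaces.
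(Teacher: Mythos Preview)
The paper does not include a proof of this theorem; it is stated immediately after Theorem~\ref{th:super-principle} and the text then moves directly to Section~\ref{sec:convergence}. Your proposal follows exactly the route the surrounding material sets up: invoke Theorem~\ref{th:super-principle} to produce $\mathbb{Q}$ and the action representation, use the disintegration identities for the Fisher information and the fluxes recorded at the beginning of Section~\ref{sec:superp}, exploit the linearity of $\sfE_\infty$ in $\sfP$ for the energy term, and then appeal to the variational characterization of \eqref{eq:MF} for the null-minimizer conclusion and the push-forward formula $\sfP_t=(\scrS_t)_\#\sfP_0$. This is the intended argument, and it parallels the superposition-based characterizations in the references the paper cites for related settings.

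One remark on your ``main obstacle'': the identity $\sfD_\infty(\sfP)=\int_\varGamma\scrD(\nu)\,\sfP(\dd\nu)$ is a pointwise-in-$\nu$ Fubini computation on Hellinger integrands and does not actually require $\sfE_\infty(\sfP)<+\infty$, despite the phrasing in Section~\ref{sec:superp}; it holds whenever $\sfP$ has finite first moment, which is part of the definition of $\mathsf{CE}_\infty$. So the Fisher term decomposes for every $r$ without further justification. The genuine role of finiteness of $\scrI_\infty$ is (i) to ensure $\sfR_\infty<+\infty$, which forces $\sfJ$ to be absolutely continuous with respect to the geometric-mean intensity measure and hence to admit the disintegration $\sfJ_t=\lambda_{t,\nu}\sfP_t$, and (ii) to guarantee $\scrE(\nu_0)<+\infty$ for $\mathbb{Q}$-a.e.\ curve so that the mean-field chain rule applies. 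Your curve-by-curve strategy handles both points correctly.
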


\subsection{Convergence results}\label{sec:convergence}

The generalized gradient structures introduced in Section~\ref{ss:nggf} and Section~\ref{sec:liouville} are particularly useful for rigorously proving the convergence of \eqref{eq:FKEn} to \eqref{eq:Liouville}. We employ a form of convergence called evolutionary Gamma-convergence. It was first introduced in \cite{sandier2004gamma} and led to numerous subsequent studies surveyed in \cite{mielke2016evolutionary, serfaty2011gamma}. Let us mention that other notions of convergence were introduced in this context known as EDP convergence \cite{dondl2019gradient,mielke2021exploring}, which requires further limsup estimates and is not considered in this paper. We follow the strategy of the evolutionary Gamma-convergence: establishing the liminf estimate for the energy-dissipation functional and deducing the convergence of gradient flow solutions.

\begin{theorem}\label{th:convergence} Let $\{(\sfP^n, \sfJ^n)\}_{n\in\N}$ be a sequence of generalized gradient flow solutions of \eqref{eq:FKEn} 
with initial data $\{\sfP^n_\text{in}\}_{n\in\N}$ and suppose there exists $\sfP_\text{in}\in \text{dom} (\sfE_\infty)$ such that
$$
    \sfP^n_\text{in} \rightharpoonup \sfP_\text{in},\qquad \lim_{n\to\infty} \sfE_n (\sfP^n_\text{in}) = \sfE_\infty(\sfP_\text{in}).
$$
Then there exist a (not relabelled) subsequence$\{(\sfP^n, \sfJ^n)\}_{n\in\N}$ and a limit pair $(\sfP, \sfJ)$ such that
\begin{enumerate}
    \item $(\sfP, \sfJ) \in \mathsf{CE}_\infty$, where
    \begin{itemize}
        \item $\sfP^n_t \rightharpoonup \sfP_t$ narrowly for all $t\in [0,T]$;
        \item $\sfJ^{n,\sfb\sfd}_t(\dd\nu\dd x) \rightharpoonup^* \sfJ^{\sfb\sfd}_t (\dd\nu\dd x)$ weakly-* in $\calM(\varGamma\times X)$ for all $t\in [0,T]$;
        \item $\sfJ^{n,\sfh}_t (\dd\nu\dd x\dd y)\rightharpoonup^* \sfJ^{\sfh}_t (\dd\nu\dd x\dd y)$ weakly-* in $\calM(\varGamma\times X^2)$ for all $t\in [0,T]$.
    \end{itemize}
    \item The following $\liminf$ inequality holds
    \[
        \mathscr{I}_\infty ([s,t];\sfP,\sfJ)\le \liminf_{n\to\infty} \mathscr{I}_n([s,t];\sfP^n,\sfJ^n)\qquad\text{for every $[s,t]\subset[0,T]$}.
    \]
    \item $(\sfP, \sfJ)$ is the generalized gradient flow solution of the Liouville equation \eqref{eq:Liouville} with initial datum $\sfP_{in}$.
\end{enumerate}
\end{theorem}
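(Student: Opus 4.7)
The plan is to execute the standard evolutionary $\Gamma$-convergence strategy along three steps: (i) compactness of $(\sfP^n,\sfJ^n)$ extracting a limit pair $(\sfP,\sfJ)\in\mathsf{CE}_\infty$, (ii) a liminf estimate $\mathscr{I}_\infty\le \liminf \mathscr{I}_n$, and (iii) combining the fact that $\mathscr{I}_n\equiv 0$ with Theorem~\ref{th:superposition} to conclude that $(\sfP,\sfJ)$ is the unique generalized gradient flow solution of \eqref{eq:Liouville}.

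For step (i), I would first use Lemma~\ref{lm:fke_uniform} to get uniform boundedness $\sup_n\sup_{t\in[0,T]}\sfE_n(\sfP^n_t)<\infty$ and equicontinuity of $t\mapsto\sfP^n_t$ in the distance $\frakd$, then apply an Arzel\`a--Ascoli-type argument (as in \cite{PRST2022}) combined with Proposition~\ref{prop:energy_lim} to extract a subsequence with $\sfP^n_t\rightharpoonup\sfP_t$ narrowly for every $t\in[0,T]$. For the fluxes, the uniform action bound $\int_0^T \sfR_n(\sfP^n_r,\sfJ^n_r)\dd r\le \sfE_n(\sfP^n_{\mathrm{in}})$ (which follows from $\mathscr{I}_n\equiv 0$ and non-negativity of $\sfD_n$), together with the superlinear growth of $\uppsi$ and the uniform $L^1$ control of the jump intensities (from the uniform bounds in Assumption~\ref{ass:db} and the first-moment estimate \eqref{eq:moment}), yields via Young's inequality $\sup_n\int_0^T(\|\sfJ^{n,\sfb\sfd}_t\|+\|\sfJ^{n,\sfh}_t\|)\dd t<\infty$. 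Hence a further subsequence of $\sfJ^{n,\sfb\sfd}$, $\sfJ^{n,\sfh}$ converges weakly-$*$ in the corresponding spaces of finite measures on $[0,T]\times\varGamma\times X$ and $[0,T]\times\varGamma\times X^2$. Passing to the limit in the continuity equation is then a Taylor-expansion computation: for $F\in\Cyl(\varGamma)$ one has $\dnabla_{n,\sfb\sfd}F(\nu,x)=\nabla_\varGamma F(\nu,x)+O(n^{-1})$ uniformly in $(\nu,x)$ on the relevant support (using Remark~\ref{rem:li_mb}), and similarly $\dnabla_{n,\sfh}F\to\dnabla_\varGamma F$, yielding $(\sfP,\sfJ)\in\mathsf{CE}_\infty$.

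For step (ii), the energy liminf $\sfE_\infty(\sfP_t)\le \liminf_n \sfE_n(\sfP^n_t)$ is exactly Proposition~\ref{prop:energy_lim}. The heart of the argument is the liminf for the dissipation terms. I would first show joint narrow convergence of the intensity measures on $\varGamma\times X$ (resp.\ $\varGamma\times X^2$):
\begin{equation*}
\vartheta^{n,\sfb}_{\sfP^n}\rightharpoonup \vartheta^\sfb_\sfP,\quad
\sfT^{n,-}_\#\vartheta^{n,\sfd}_{\sfP^n}\rightharpoonup \vartheta^\sfd_\sfP,\quad
\vartheta^{n,\sfh}_{\sfP^n}\rightharpoonup\vartheta^\sfh_\sfP,\quad
\sfT^{n,\sfh}_\#\vartheta^{n,\sfh}_{\sfP^n}\rightharpoonup\sfT_\#\vartheta^\sfh_\sfP.
\end{equation*}
This uses the uniform bounds and uniform convergence $\sfb_n\to\sfb$, $\sfd_n\to\sfd$, $\sfh_n\to\sfh$ from Assumption~\ref{ass:db}, the joint continuity of the limit rates in $(\nu,x)$, the narrow convergence $\sfP^n\rightharpoonup\sfP$, and the crucial observation that the shifts $\sfT^{n,\pm}$ and $\sfT^{n,\sfh}$ are small $n^{-1}\delta_x$-perturbations which disappear in the narrow limit by continuity in $\nu$ (a straightforward $\varepsilon/3$ argument after approximating test functions by cylindrical ones). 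Once these convergences are established, the liminf for $\int_s^t \sfR_n(\sfP^n_r,\sfJ^n_r)\dd r$ and $\int_s^t \sfD_n(\sfP^n_r)\dd r$ follows from the joint narrow lower semicontinuity of the $1$-homogeneous, convex functional $\Upsilon$ and of $\calH^2$ (cf.\ the theory developed in \cite[Section~2.3]{PRST2022}), applied in both the space and time variables.

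Step (iii) is then immediate: since each $\mathscr{I}_n([s,t];\sfP^n,\sfJ^n)=0$, the liminf estimate yields $\mathscr{I}_\infty([s,t];\sfP,\sfJ)\le 0$ for every $[s,t]\subset[0,T]$. Theorem~\ref{th:superposition} ensures $\mathscr{I}_\infty\ge 0$ for any $(\sfP,\sfJ)\in\mathsf{CE}_\infty$ with $\sfP_0\in\mathrm{dom}(\sfE_\infty)$, so in fact $\mathscr{I}_\infty\equiv 0$, and the same theorem identifies $(\sfP,\sfJ)$ as the unique generalized gradient flow solution of \eqref{eq:Liouville} with initial datum $\sfP_{\mathrm{in}}$, together with the explicit flux representation. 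Uniqueness of the limit also implies that the whole sequence (not only a subsequence) converges. I expect the main technical obstacle to be the liminf for the dissipation functional: carefully handling the shifts $\sfT^{n,\pm}$, $\sfT^{n,\sfh}$ inside the intensity measures to secure joint narrow convergence, and then upgrading the classical lower semicontinuity of $\Upsilon$ and the squared Hellinger distance from fixed-measure narrow convergence to the situation where both the ``reference'' measures and the ``tested'' measures vary with $n$.
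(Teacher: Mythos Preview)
Your overall strategy matches the paper's proof closely: compactness via Lemma~\ref{lm:fke_uniform} and Arzel\`a--Ascoli, convergence of the intensity measures $\vartheta^{n,\sfb},\vartheta^{n,\sfd},\vartheta^{n,\sfh}$ and their shifts, liminf for $\sfR_n$ and $\sfD_n$ via joint convexity and lower semicontinuity of $\Upsilon$ and $\calH^2$, and closing with Theorem~\ref{th:superposition}.

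The one place where you take a detour is the flux compactness. You propose to extract subsequential weak-$*$ limits of $\sfJ^{n,\sfb\sfd},\sfJ^{n,\sfh}$ as spacetime measures from a uniform action bound and the superlinearity of $\uppsi$. That is the ``generic'' evolutionary $\Gamma$-convergence move, but it only delivers convergence in $\calM([0,T]\times\varGamma\times X)$, not the pointwise-in-$t$ convergence claimed in part~(1), and it does not immediately identify the limit flux. The paper instead exploits that $(\sfP^n,\sfJ^n)$ are \emph{gradient flow} solutions, so by Theorem~\ref{th:fke_main} the fluxes have the explicit form
\[
\sfJ^{n,\sfb\sfd}_t=\vartheta^{n,\sfb}_{\sfP^n_t}-\sfT^{n,-}_{\#}\vartheta^{n,\sfd}_{\sfP^n_t},\qquad
\sfJ^{n,\sfh}_t=\tfrac12\bigl(\vartheta^{n,\sfh}_{\sfP^n_t}-\sfT^{n,\sfh}_{\#}\vartheta^{n,\sfh}_{\sfP^n_t}\bigr).
\]
Once you have established the convergence of the intensity measures (which you already do in step~(ii)), the pointwise-in-$t$ weak-$*$ convergence of the fluxes to $\vartheta^\sfb_{\sfP_t}-\vartheta^\sfd_{\sfP_t}$ and $\tfrac12(\vartheta^\sfh_{\sfP_t}-\sfT_\#\vartheta^\sfh_{\sfP_t})$ is immediate, with no additional compactness argument needed. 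This both simplifies the proof and gives the stronger convergence actually asserted in the theorem.
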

\begin{proof}
We first prove the compactness statement of (1). Consider a sequence $\{(\sfP^n, \sfJ^n)\}_{n\in\N}$ of  generalized gradient flow solutions of \eqref{eq:FKEn} with initial data $\{\sfP^n_\text{in}\}_{n\in\N}$ such that $\sup_{n\in\N} \sfE_n (\sfP^n_\text{in}) < \infty$. By the uniform estimates of Lemma \ref{lm:fke_uniform} we obtain 
\[
      \limsup_{n\to \infty} \sup_{t\in [0,T]} \sfE_n(\sfP_t^n) < \infty, \qquad
      \limsup_{n\to \infty } \frakd(\sfP_t^n,\sfP_s^n)\leq K |t-s|,
\]
for some constant $K>0$, independent of $s,t\in [0,T]$. Recall, $\frakd$ is the metric defined in \eqref{eq:fke_dist}, which is narrowly lower semicontinuous and whose convergence implies narrow convergence on narrowly compact subsets. Since the family $\{\sfE_n\}_{n\geq 1}$ is narrowly equicoercive (cf.\ Proposition \ref{prop:energy_lim}), an Arzel\`a–Ascoli argument gives a (not relabelled) subsequence $\sfP^n$ and a curve $\sfP$ such that $\sfP_t^n\rightharpoonup \sfP_t$ narrowly for all $t\in [0,T]$. 

The narrow convergence $\sfP^n_t \rightharpoonup \sfP_t$ and Assumption~\ref{ass:db} provides the following limits 
\begin{equation}\label{eq:conv_vp}
\begin{gathered}
   \vartheta^{n,\sfb}_{\sfP^n_t} \rightharpoonup^* \vartheta^{\sfb}_{\sfP_t}, \qquad \vartheta^{n,\sfd}_{\sfP^n_t} \rightharpoonup^* \vartheta^{\sfd}_{\sfP_t}, \qquad \vartheta^{n,\sfh}_{\sfP^n_t} \rightharpoonup^* \vartheta^{\sfh}_{\sfP_t} \\
   \sfT^{n,-}_{\#}\vartheta^{n,\sfd}_{\sfP^n_t} \rightharpoonup^* \vartheta^{\sfd}_{\sfP_t}, \qquad \sfT^{n,\sfh}_{\#} \vartheta^{n,\sfh}_{\sfP^n_t} \rightharpoonup^* \sfT^{\sfh}_{\#} \vartheta^{\sfh}_{\sfP_t}.
\end{gathered} 
\end{equation}
Namely, for any $f \in C_c(\varGamma\times X)$, dropping the $t$-dependence,
    \begin{align*}
        &\lim_{n\to\infty} \iint_{\varGamma\times X} f(\nu, x)\, \vartheta^{n,\sfb}_{\sfP^n} (\dd \nu\dd x)
        \\
        &\hspace{6em}= \lim_{n\to\infty}  \iint_{\varGamma\times X} f(\nu, x) \sfb(\nu, \dd x) \sfP^n(\dd \nu) + \lim_{n\to\infty} \iint_{\varGamma\times X} f(\nu, x) (\sfb_n - \sfb) (\nu, \dd x) \sfP^n(\dd \nu)\\
        &\hspace{6em}= \iint_{\varGamma\times X} f(\nu, x) \sfb(\nu, \dd x) \sfP(\dd \nu).
    \end{align*}
    Thus, $\vartheta^{n,\sfb}_{\sfP^n} \rightharpoonup^* \vartheta^{\sfb}_{\sfP}$ in $\calM(\varGamma\times X)$. By a similar argument, we deduce $\vartheta^{n,\sfd}_{\sfP^n} \rightharpoonup^* \vartheta^{\sfd}_{\sfP}$ in $\calM(\varGamma\times X)$ and $\vartheta^{n,\sfh}_{\sfP^n} \rightharpoonup^* \vartheta^{\sfh}_{\sfP}$ in $\calM(\varGamma\times X^2)$. Moreover, the uniform continuity of $f\in C_c(\varGamma\times X)$ provides
    \begin{align*}
        \lim_{n\to\infty} \iint_{\varGamma\times X} f(\nu, x) &\,\sfT^{n,-}_{\#}\vartheta^{n,\sfd}_{\sfP^n}(\dd \nu\dd x)
        = \lim_{n\to\infty} \iint_{\varGamma\times X} f(\nu - n^{-1}\delta_x, x) \, \vartheta^{n,\sfd}_{\sfP^n}(\dd \nu \dd x)\\
        &= \iint_{\varGamma\times X} f(\nu, x) \,\vartheta^{\sfd}_{\sfP}(\dd \nu \dd x) + \lim_{n\to\infty} \iint_{\varGamma\times X} \big( f(\nu - n^{-1}\delta_x, x) - f(\nu, x) \big) \,\vartheta^{n,\sfd}_{\sfP^n}(\dd \nu \dd x) \\
        &= \iint_{\varGamma\times X} f(\nu, x) \,\vartheta^{\sfd}_{\sfP}(\dd \nu\dd x).
    \end{align*}
    Consequently, we have convergence $\sfT^{n,-}_{\#}\vartheta^{n,\sfd}_{\sfP^n} \rightharpoonup^* \vartheta^{\sfd}_{\sfP}$ weakly-* in $\calM(\varGamma\times X)$. Similarly, we find
    \begin{align*}
        \lim_{n\to\infty} \iiint_{\varGamma\times X^2} f(\nu, x, y) & \big( \sfT^{n,\sfh}_{\#} \vartheta^{n,\sfh}_{\sfP^n} \big) (\dd\nu \dd x \dd y)
        = \lim_{n\to\infty} \iiint_{\varGamma\times X^2} f\big( \nu + n^{-1}(\delta_y - \delta_x), y, x \big) \vartheta^{n,\sfh}_{\sfP^n} (\dd\nu \dd x \dd y) \\
        &= \iiint_{\varGamma\times X^2} f(\nu, y, x) \,\vartheta^{\sfh}_{\sfP} (\dd\nu \dd x \dd y)
        = \iiint_{\varGamma\times X^2} f(\nu, x, y) \,\big( \sfT_{\#} \vartheta^{\sfh}_{\sfP} \big) (\dd\nu \dd x \dd y).
    \end{align*}
    By the representation of the fluxes $\sfJ^{n,\sfb\sfd}$ and $\sfJ^{n,h}$ as in Theorem \ref{th:fke_main} we obtain for all $t\in [0,T]$ 
    \[
    \sfJ^{n,\sfb\sfd}_{\sfP^n_t}\rightharpoonup^*\sfJ^{\sfb\sfd}_{\sfP}\coloneq \vartheta^{\sfb}_{\sfP_t}-\vartheta^{\sfd}_{\sfP_t}    , \qquad \sfJ^{n,\sfh}_{\sfP^n_t}\rightharpoonup^*\sfJ^{\sfh}\coloneq  \frac{1}{2}\left(\vartheta^{\sfh}_{\sfP_t}-\sfT_{\#}\vartheta^{\sfh}_{\sfP_t}\right).
    \]
Recall from Remark \ref{rem:li_mb} that for any $F\in \mathrm{Cyl}(\varGamma)$ the mass $\|\nu\|$ is bounded on the support of $F$, and in particular $F\in C_c(\varGamma)$. It is then clear that $\nabla_{\varGamma}F(\nu,x)\in C_c(\varGamma\times X)$, $\dnabla_{\varGamma}F(\nu,x,y)\in C_c(\varGamma\times X^2)$, and via Taylor expansion as mentioned in Section \ref{sec:liouville}, we can find a constant $c_{F}$ such that for all $\nu\in \varGamma$ and $x,y\in X$:
\[\left| \nabla_{\varGamma}F(\nu,x)-\dnabla_{n,\sfb\sfd} F(\nu,x)\right| + \left| \dnabla_{\varGamma}F(\nu,x,y)-\dnabla_{n,\sfh}F(\nu,x,y)\right|\leq c_{F} n^{-1}.\]
Therefore one can pass to the limit in the continuity equation \eqref{eq:CElim} to conclude that $(\sfP,\sfJ^{\sfb\sfd},\sfJ^{\sfh})\in \mathsf{CE}_\infty$.

\medskip

We now turn to the $\liminf$ inequality in (2). Assume that the convergences in (1) hold. We then establish separately $\liminf$ inequalities for all the components of the energy-dissipation functional, i.e.,
    \begin{gather*}
        \liminf_{n\to\infty} \int_0^T \sfR_n(\sfP^n_t, \sfJ^n_t) \dd t \geq \int_0^T \sfR_\infty(\sfP_t, \sfJ_t) \dd t,\qquad
        \liminf_{n\to\infty} \int_0^T \sfD_n(\sfP^n_t) \dd t \geq \int_0^T \sfD_\infty(\sfP_t) \dd t, \\
        \liminf_{n\to\infty} \sfE_n(\sfP^n_t) \geq 
        \sfE_\infty (\sfP_t) \qquad \text{for all } t\in [0, T].
    \end{gather*} 
Note that the $\liminf$ inequality for $\sfE_n$ follows from Proposition \ref{prop:energy_lim}. As for the dissipation potential, recall that $\sfR_n$ is the sum of two components (we omit the subscript $t$ for now):
    \begin{align*}
        \sfR_{n,\sfb\sfd}(\sfP^n,\sfJ^{n,\sfb\sfd}) = 2\iint_{\varGamma\times X} \Upsilon(\sfJ^{n,\sfb\sfd}/2,\vartheta^{n,\sfb}_{\sfP^n},\sfT^{n,-}_{\#}\vartheta^{n,\sfd}_{\sfP^n}), \qquad  \sfR_{n,\sfh} (\sfP^n, \sfJ^{n,\sfh}) = \iiint_{\varGamma\times X^2} \Upsilon(\sfJ^{n,\sfh},\vartheta^{n,\sfh}_{\sfP^n},\sfT^{n,\sfh}_{\#}\vartheta^{n,\sfh}_{\sfP^n}).
    \end{align*}
Due to the convergences in \eqref{eq:conv_vp} and convergence of the fluxes, we can use the convexity and lower semi-continuity of $\Upsilon$ and a standard relaxation of integral functionals argument (see e.g.\ \cite[Theorem 3.4.3]{Buttazzo1989}) to conclude that
    \begin{align*}
        \liminf_{n\to\infty} \sfR_{n,\sfb\sfd}(\sfP^n,\sfJ^{n,\sfb\sfd}) \geq 2\iint_{\varGamma\times X} \Upsilon(\sfJ^{\sfb\sfd}/2,\vartheta^{\sfb}_{\sfP}, \vartheta^{\sfd}_{\sfP}), \qquad
        \liminf_{n\to\infty} \sfR_{n,\sfh} (\sfP^n, \sfJ^{\sfh}) \geq \iiint_{\varGamma\times X^2} \Upsilon(\sfJ^{\sfh},\vartheta^{\sfh}_{\sfP},\sfT_{\#}\vartheta^{\sfh}_{\sfP}).
    \end{align*}
The $\liminf$ inequality for the Fisher information follows similarly, from the convexity and lower semicontinuity of the squared Hellinger distance. 

\medskip
Finally, to establish (3), note that $\mathscr{I}_n([0,T];\sfP^n,\sfJ^n)=0$. By the lower semicontinuity results of (2) we find that $\mathscr{I}_{\infty}([0,T];\sfP,\sfJ)\leq 0$, and conclude by Theorem \ref{th:superposition} that $(\sfP,\sfJ)$ is the unique generalized gradient flow solution of the Liouville equation \eqref{eq:Liouville} with initial datum $\sfP_{in}$.
\end{proof}

With Theorem \ref{th:convergence} in hand, the main result of this paper follows after classical compactness and lower semicontinuity argument (c.f. \cite{sandier2004gamma,serfaty2011gamma}).

\begin{proof}[Proof of Theorem~\ref{th:main}]
Since all converging subsequences of $(\sfP^n,\sfJ^n)$ converge to the unique generalized flow of the Liouville equation, we obtain by compactness the convergence of the original sequence $(\sfP^n,\sfJ^n)$. Moreover, from the convergence $\sfE_n(\sfP^n_\text{in})\to \sfE_\infty(\sfP_\text{in})$ and our previously established lower semicontinuity results, we find that for every $t\in [0,T]$, 
\begin{align*}
    \limsup_{n\to \infty} \sfE_n(\sfP^n_t)&\leq\lim_{n\to \infty}\sfE_n(\sfP^n_\text{in})-\liminf_{n\to\infty} \int_0^t \left(  \sfR_n(\sfP^n_r, \sfJ^n_r) + \sfD_n(\sfP^n_r)\right) \dd r \\
    &\leq \sfE_{\infty}(\sfP_\text{in})-\int_0^t \left(  \sfR_{\infty}(\sfP_r, \sfJ_r) + \sfD_{\infty}(\sfP_r)\right) \dd r = \sfE_{\infty}(\sfP_t) \le \liminf_{n\to \infty} \sfE_n(\sfP^n_t),
\end{align*}
and hence we obtain convergence of the energies $\sfE_n$. Substituting $\sfP^n_\text{in}=\delta_{\bar \nu}$, and noting that Theorem \ref{th:superposition} ensures that $\sfP_t=\nu_t$ with $\nu$ being the unique solution to \eqref{eq:MF}, the convergence of $\sfE_n(\sfP_t^n)$ to $\sfE(\sfP_t)=\calE(\nu_t)$ follows. Using the uniform upper and lower bounds of $u_t$, one can follow the arguments in the proof of \cite[Theorem 5.4]{hoeksema2023} to transform the latter convergence into the desired entropic propagation of chaos result, i.e.,
\[
    \lim_{n\to\infty} \frac{1}{n}\Ent(\sfP_t^n|\Pi_{\nu_t}^n) = 0\qquad\text{for every $t\in[0,T]$}.\qedhere
\]
\end{proof}

\bibliographystyle{abbrv}
\bibliography{ref}

\begin{thebibliography}{10}

\bibitem{ambrosio2014well}
L.~Ambrosio and D.~Trevisan.
\newblock {Well-posedness of Lagrangian flows and continuity equations in metric measure spaces}.
\newblock {\em Analysis \& PDE}, 7(5):1179--1234, 2014.

\bibitem{BaMe2015}
V.~Bansaye and S.~M\'{e}l\'{e}ard.
\newblock {\em Stochastic models for structured populations}, volume 1.4 of {\em Mathematical Biosciences Institute Lecture Series. Stochastics in Biological Systems}.
\newblock Springer, Cham; MBI Mathematical Biosciences Institute, Ohio State University, Columbus, OH, 2015.
\newblock Scaling limits and long time behavior.

\bibitem{RevModPhys.88.045006}
C.~Bechinger, R.~Di~Leonardo, H.~L\"owen, C.~Reichhardt, G.~Volpe, and G.~Volpe.
\newblock Active particles in complex and crowded environments.
\newblock {\em Rev. Mod. Phys.}, 88:045006, Nov 2016.

\bibitem{begon2021ecology}
M.~Begon and C.~R. Townsend.
\newblock {\em Ecology: from individuals to ecosystems}.
\newblock John Wiley \& Sons, 2021.

\bibitem{Buttazzo1989}
G.~Buttazzo.
\newblock {\em {Semicontinuity, relaxation and integral representation in the calculus of variations}}.
\newblock Pitman Research Notes in Mathematics Series. Longman Scientific and Technical, Harlow, 1998.

\bibitem{chaintrondiez2022}
L.-P. Chaintron and A.~Diez.
\newblock Propagation of chaos: A review of models, methods and applications. {I}. {M}odels and methods.
\newblock {\em Kinetic and Related Models}, 15(6):895--1015, 2022.

\bibitem{champagnat2006unifying}
N.~Champagnat, R.~Ferri{\`e}re, and S.~M{\'e}l{\'e}ard.
\newblock Unifying evolutionary dynamics: from individual stochastic processes to macroscopic models.
\newblock {\em Theoretical population biology}, 69(3):297--321, 2006.

\bibitem{cornell2019unified}
S.~J. Cornell, Y.~F. Suprunenko, D.~Finkelshtein, P.~Somervuo, and O.~Ovaskainen.
\newblock A unified framework for analysis of individual-based models in ecology and beyond.
\newblock {\em Nature Communications}, 10(1):4716, 2019.

\bibitem{dondl-etal2019}
P.~Dondl, T.~Frenzel, and A.~Mielke.
\newblock A gradient system with a wiggly energy and relaxed {EDP}-convergence.
\newblock {\em ESAIM Control Optim. Calc. Var.}, 25:Paper No. 68, 45, 2019.

\bibitem{dondl2019gradient}
P.~Dondl, T.~Frenzel, and A.~Mielke.
\newblock A gradient system with a wiggly energy and relaxed edp-convergence.
\newblock {\em ESAIM: Control, Optimisation and Calculus of Variations}, 25:68, 2019.

\bibitem{erbar2023gradient}
M.~Erbar.
\newblock A gradient flow approach to the boltzmann equation.
\newblock {\em Journal of the European Mathematical Society}, 2023.

\bibitem{erbaretal2016}
M.~Erbar, M.~Fathi, V.~Laschos, and A.~Schlichting.
\newblock Gradient flow structure for {M}c{K}ean-{V}lasov equations on discrete spaces.
\newblock {\em Discrete Contin. Dyn. Syst.}, 36(12):6799--6833, 2016.

\bibitem{finkelshtein2009individual}
D.~Finkelshtein, Y.~Kondratiev, and O.~Kutoviy.
\newblock Individual based model with competition in spatial ecology.
\newblock {\em SIAM journal on mathematical analysis}, 41(1):297--317, 2009.

\bibitem{finkelshtein2010vlasov}
D.~Finkelshtein, Y.~Kondratiev, and O.~Kutoviy.
\newblock Vlasov scaling for stochastic dynamics of continuous systems.
\newblock {\em Journal of Statistical Physics}, 141:158--178, 2010.

\bibitem{Finkelshtein2010}
D.~Finkelshtein, Y.~Kondratiev, and O.~Kutoviy.
\newblock {Vlasov scaling for stochastic dynamics of continuous systems}.
\newblock {\em J. Stat. Phys.}, 141(1):158--178, 2010.

\bibitem{FKT2019}
D.~Finkelshtein, Y.~Kondratiev, and P.~Tkachov.
\newblock Doubly nonlocal {F}isher–{KPP} equation: Speeds and uniqueness of traveling waves.
\newblock {\em Journal of Mathematical Analysis and Applications}, 475(1):94--122, 2019.

\bibitem{finkelshtein2013markov}
D.~L. Finkelshtein, Y.~G. Kondratiev, and M.~J. Oliveira.
\newblock Markov evolutions and hierarchical equations in the continuum. ii: Multicomponent systems.
\newblock {\em Reports on Mathematical Physics}, 71(1):123--148, 2013.

\bibitem{fournier2004microscopic}
N.~Fournier and S.~M\'{e}l\'{e}ard.
\newblock A microscopic probabilistic description of a locally regulated population and macroscopic approximations.
\newblock {\em Ann. Appl. Probab.}, 14(4):1880--1919, 2004.

\bibitem{grimm2005ecology}
V.~Grimm and S.~F. Railsback.
\newblock {\em Individual-based modeling and ecology}.
\newblock Princeton Series in Theoretical and Computational Biology. Princeton University Press, Princeton, NJ, 2005.

\bibitem{grmela2010multiscale}
M.~Grmela.
\newblock Multiscale equilibrium and nonequilibrium thermodynamics in chemical engineering.
\newblock In {\em Advances in Chemical Engineering}, volume~39, pages 75--129. Elsevier, 2010.

\bibitem{hoeksema2023thesis}
J.~Hoeksema.
\newblock {\em Mean-field limits and beyond: Large deviations for singular interacting diffusions and variational convergence for population dynamics}.
\newblock Phd thesis 1 (research tu/e / graduation tu/e), Mathematics and Computer Science, Feb. 2023.
\newblock Proefschrift.

\bibitem{ThesisHoeksema2023}
J.~Hoeksema.
\newblock {\em {Mean-field limits and beyond: Large deviations for singular interacting diffusions and variational convergence for population dynamics}}.
\newblock PhD thesis, Eindhoven University of Technology, 2023.

\bibitem{hoeksema2020}
J.~Hoeksema, T.~Holding, M.~Maurelli, and O.~Tse.
\newblock {Large deviations for singularly interacting diffusions}.
\newblock {\em arXiv preprint arXiv:2002.01295}, 2020.

\bibitem{hoeksema2023}
J.~Hoeksema and O.~Tse.
\newblock Generalized gradient structures for measure-valued population dynamics and their large-population limit.
\newblock {\em Calc. Var. Partial Differential Equations}, 62(5):Paper No. 158, 72, 2023.

\bibitem{kansal2000simulated}
A.~R. Kansal, S.~Torquato, G.~Harsh~Iv, E.~Chiocca, and T.~Deisboeck.
\newblock Simulated brain tumor growth dynamics using a three-dimensional cellular automaton.
\newblock {\em Journal of theoretical biology}, 203(4):367--382, 2000.

\bibitem{kermack1927contribution}
W.~O. Kermack and A.~G. McKendrick.
\newblock A contribution to the mathematical theory of epidemics.
\newblock {\em Proceedings of the royal society of london. Series A, Containing papers of a mathematical and physical character}, 115(772):700--721, 1927.

\bibitem{kondratiev1999analysis}
Y.~G. Kondratiev, E.~W. Lytvynov, and G.~F. Us.
\newblock Analysis and geometry on marked configuration space.
\newblock {\em Methods Funct. Anal. Topology}, 5(1):29--64, 1999.

\bibitem{miroslaw2005}
M.~Lachowicz.
\newblock General population systems. {M}acroscopic limit of a class of stochastic semigroups.
\newblock {\em J. Math. Anal. Appl.}, 307(2):585--605, 2005.

\bibitem{liero-etal2017}
M.~Liero, A.~Mielke, M.~A. Peletier, and D.~R.~M. Renger.
\newblock On microscopic origins of generalized gradient structures.
\newblock {\em Discrete Contin. Dyn. Syst. Ser. S}, 10(1):1--35, 2017.

\bibitem{maasmielke2020}
J.~Maas and A.~Mielke.
\newblock Modeling of chemical reaction systems with detailed balance using gradient structures.
\newblock {\em J. Stat. Phys.}, 181(6):2257--2303, 2020.

\bibitem{Mariani2012}
M.~Mariani.
\newblock {A Gamma-convergence approach to large deviations}.
\newblock {\em arXiv preprint arXiv:1204.0640}, 2012.

\bibitem{martin1976}
R.~H. Martin, Jr.
\newblock {\em Nonlinear operators and differential equations in {B}anach spaces}.
\newblock Pure and Applied Mathematics. Wiley-Interscience [John Wiley \& Sons], New York-London-Sydney, 1976.

\bibitem{mielke2011}
A.~Mielke.
\newblock Differential, energetic, and metric formulations for rate-independent processes.
\newblock In {\em Nonlinear {PDE}'s and applications}, volume 2028 of {\em Lecture Notes in Math.}, pages 87--170. Springer, Heidelberg, 2011.

\bibitem{mielke2016evolutionary}
A.~Mielke.
\newblock On evolutionary {$\Gamma$}-convergence for gradient systems.
\newblock In {\em Macroscopic and Large Scale Phenomena: Coarse Graining, Mean Field Limits and Ergodicity}, pages 187--249. Springer, 2016.

\bibitem{mielke2021exploring}
A.~Mielke, A.~Montefusco, and M.~A. Peletier.
\newblock Exploring families of energy-dissipation landscapes via tilting: three types of {EDP} convergence.
\newblock {\em Continuum Mechanics and Thermodynamics}, 33(3):611--637, 2021.

\bibitem{mielke-etal2017}
A.~Mielke, R.~I.~A. Patterson, M.~A. Peletier, and D.~R.~M. Renger.
\newblock Non-equilibrium thermodynamical principles for chemical reactions with mass-action kinetics.
\newblock {\em SIAM J. Appl. Math.}, 77(4):1562--1585, 2017.

\bibitem{mielke-etal2014}
A.~Mielke, M.~A. Peletier, and D.~R.~M. Renger.
\newblock On the relation between gradient flows and the large-deviation principle, with applications to {M}arkov chains and diffusion.
\newblock {\em Potential Anal.}, 41(4):1293--1327, 2014.

\bibitem{mielke-etal2012}
A.~Mielke, R.~Rossi, and G.~Savar\'{e}.
\newblock Variational convergence of gradient flows and rate-independent evolutions in metric spaces.
\newblock {\em Milan J. Math.}, 80(2):381--410, 2012.

\bibitem{PRST2022}
M.~A. Peletier, R.~Rossi, G.~Savar\'{e}, and O.~Tse.
\newblock Jump processes as generalized gradient flows.
\newblock {\em Calc. Var. Partial Differential Equations}, 61(1):Paper No. 33, 85, 2022.

\bibitem{peletier2023}
M.~A. Peletier and A.~Schlichting.
\newblock Cosh gradient systems and tilting.
\newblock {\em Nonlinear Anal.}, 231:Paper No. 113094, 113, 2023.

\bibitem{conmatphys-070909-104101}
S.~Ramaswamy.
\newblock The mechanics and statistics of active matter.
\newblock {\em Annual Review of Condensed Matter Physics}, 1(Volume 1, 2010):323--345, 2010.

\bibitem{sandier2004gamma}
E.~Sandier and S.~Serfaty.
\newblock {$\Gamma$}-convergence of gradient flows with applications to {G}inzburg--{L}andau.
\newblock {\em Communications on Pure and Applied Mathematics}, 57(12):1627--1672, 2004.

\bibitem{serfaty2011gamma}
S.~Serfaty.
\newblock {$\Gamma$}-convergence of gradient flows on {H}ilbert and metric spaces and applications.
\newblock {\em Discrete \& Continuous Dynamical Systems}, 31(4):1427, 2011.

\bibitem{Sznitman1991}
A.-S. Sznitman.
\newblock {Topics in propagation of chaos}.
\newblock In {\em {\'{E}}cole d'{\'{E}}t{\'{e}} de Probabilit{\'{e}}s de Saint-Flour XIX-1989}, volume 1464 of {\em Lecture Notes in Math.}, pages 165--251. Springer, Berlin, 1991.

\bibitem{varadhan1984}
S.~R.~S. Varadhan.
\newblock {\em Large deviations and applications}, volume~46 of {\em CBMS-NSF Regional Conference Series in Applied Mathematics}.
\newblock Society for Industrial and Applied Mathematics (SIAM), Philadelphia, PA, 1984.

\end{thebibliography}

\end{document}